\title{On the Bauer--Furuta construction}
\author{Takumi Maegawa}
\address{Graduate School of Mathematical Science, the University of Tokyo}
\email{\url{tmaegawa@fuji.waseda.jp}}
\numberwithin{equation}{section}
\definecolor{darkgreen}{rgb}{0,0.30,0} 
\definecolor{darkred}{rgb}{0.75,0,0}
\definecolor{darkblue}{rgb}{0,0,0.6}
\definecolor{lightblue}{rgb}{0,0.5,0.73}
\def\@footnotecolor{red}
\def\@footnotemark{%
  \leavevmode
  \ifhmode\edef\@x@sf{\the\spacefactor}\nobreak\fi
  \stepcounter{Hfootnote}%
  \global\let\Hy@saved@currentHref\@currentHref
  \hyper@makecurrent{Hfootnote}%
  \global\let\Hy@footnote@currentHref\@currentHref
  \global\let\@currentHref\Hy@saved@currentHref
  \hyper@linkstart{footnote}{\Hy@footnote@currentHref}%
  \@makefnmark
  \hyper@linkend
  \ifhmode\spacefactor\@x@sf\fi
  \relax
}
\theoremstyle{plain}
\newtheorem{theorem}{Theorem}[subsection]
\newtheorem{lemma}[theorem]{Lemma}
\newtheorem{corollary}[theorem]{Corollary}
\newtheorem{proposition}[theorem]{Proposition}
\theoremstyle{definition}
\newtheorem{definition}[theorem]{Definition}
\newtheorem{example}[theorem]{Example}
\newtheorem{remark}[theorem]{Remark}
\newtheorem{convention}[theorem]{Convention}
\newtheorem{construction}[theorem]{Construction}
\newtheorem{notation}[theorem]{Notation}
\newtheorem{summary}{Summary}[section]
\newtheorem{remark*}[summary]{Remark}
\crefname{convention}{Convention}{Conventions}
\crefname{construction}{Construction}{Constructions}
\newcommand{\Sp}{\mathsf{Sp}}  
\newcommand{\an}{\mathsf{An}} 
\newcommand{\cat}{\mathsf{Cat}} 
\newcommand{\Cat}{\widehat{\cat}} 
\newcommand{\Grp}{\mathsf{Grp}} 
\newcommand{\CAlg}{\mathsf{CAlg}} 
\newcommand{\Mod}{\mathsf{Mod}} 
\renewcommand{\Pr}{\mathsf{Pr}} 
\DeclareMathOperator{\Pro}{\mathsf{Pro}} 
\newcommand{\Sh}{\mathsf{Sh}} 
\newcommand{\PSh}{\mathscr{P}} 
\newcommand{\DDelta}{\boldsymbol{\Delta}} 
\newcommand{\Top}{\mathsf{Top}} 
\newcommand{\Locale}{\mathsf{Locale}} 
\newcommand{\Op}{\mathrm{Open}} 
\newcommand{\Sm}{\mathrm{Sm}} 
\newcommand{\SH}{\mathsf{SH}} 
\newcommand{\Corr}{\mathsf{Corr}} 
\newcommand{\Mfd}{\mathsf{Mfld}} 
\newcommand{\St}{\mathsf{St}} 
\newcommand{\Lie}{\mathsf{Lie}} 
\newcommand{\LocSys}{\mathrm{LocSys}} 
\newcommand{\Pic}{\mathsf{Pic}} 
\newcommand{\Ban}{\mathsf{Ban}} 
\newcommand{\Glo}{\mathsf{Glo}} 
\DeclareMathOperator*{\colim}{colim} 
\let \lim \relax
\DeclareMathOperator*{\lim}{lim} 
\DeclareMathOperator{\Fun}{\mathsf{Fun}} 
\DeclareMathOperator{\Aut}{Aut} 
\DeclareMathOperator{\Iso}{Iso} 
\DeclareMathOperator{\Hom}{Hom} 
\let \hom \relax
\DeclareMathOperator{\hom}{\underline{Hom}} 
\DeclareMathOperator{\im}{im} 
\DeclareMathOperator{\cof}{cof} 
\let \ker \relax
\DeclareMathOperator{\ker}{ker} 
\DeclareMathOperator{\cok}{cok} 
\DeclareMathOperator{\Gal}{Gal} 
\DeclareMathOperator{\GL}{GL} 
\renewcommand{\H}{\mathrm{H}} 
\newcommand{\Sing}{\mathrm{Sing}} 
\newcommand{\oo}{\infty} 
\newcommand{\pt}{\mathrm{pt}} 
\newcommand{\op}{\mathrm{op}} 
\newcommand{\id}{\mathrm{id}} 
\newcommand{\sslash}{{\mathbin{/\mkern-5mu/}}} 
\newcommand{\pr}{\mathrm{pr}} 
\newcommand{\unit}{\boldsymbol{1}} 
\newcommand{\diag}{\mathrm{diag}} 
\newcommand{\ind}{\mathrm{ind}} 
\newcommand{\st}{\mathrm{st}} 
\newcommand{\lex}{\mathrm{lex}} 
\renewcommand{\L}{\mathrm{L}} 
\newcommand{\R}{\mathrm{R}} 
\renewcommand{\S}{\mathbb{S}} 
\newcommand{\Z}{\mathbb{Z}} 
\newcommand{\RR}{\mathbb{R}} 
\newcommand{\CC}{\mathbb{C}} 
\newcommand{\HH}{\mathbb{H}} 
\renewcommand{\AA}{\mathbb{A}} 
\newcommand{\Th}{\mathrm{Th}} 
\newcommand{\intr}{\mathrm{int}} 
\DeclareMathOperator{\Spin}{Spin} 
\DeclareMathOperator{\Pin}{Pin} 
\DeclareMathOperator{\SO}{SO} 
\DeclareMathOperator{\U}{\mathit{U}} 
\DeclareMathOperator{\USp}{Sp} 
\newcommand{\Spec}{\mathrm{Spec}} 
\newcommand{\Vect}{\mathrm{Vect}} 
\newcommand{\Rep}{\mathsf{Rep}} 
\newcommand{\BM}{\mathrm{BM}} 
\newcommand{\cnst}{\mathrm{cnst}} 
\newcommand{\lcnst}{\,\mathrm{l.c.}} 
\newcommand{\dR}{\mathrm{dR}} 
\newcommand{\topl}{\mathrm{top}} 
\newcommand{\BF}{\mathrm{BF}} 
\newcommand{\Fred}{\mathrm{Fred}} 
\renewenvironment{proof}[1][\relax]{\par
  \pushQED{\qed}%
  \normalfont \topsep6\p@\@plus6\p@\relax
  \trivlist
  \item[\hskip\labelsep\itshape
    \ifx#1\relax \proofname\else\proofname{} of #1\fi\@addpunct{.}]\ignorespaces
}{%
  \popQED\endtrivlist\@endpefalse
}
\begin{document}

\begin{abstract}
  Using the six-functor formalism for sheaves of spectra on topological spaces,
  we provide a novel construction of the Bauer--Furuta invariant, as well as its family version.
  This approach avoids the conventional arguments based on approximations by
  finite-dimensional subspaces, and we instead employ the Borel--Moore homology spectra relative to Fredholm maps between Banach spaces.
  A key observation here is that \( C^1 \)-differentiable Fredholm maps between Banach manifolds
  are locally proper, thereby defining the shriek functors,
  whose dualizing objects may be described as the Thom
  spectra of the Atiyah--Singer families index.

  We also outline a possible candidate for the stable homotopy theory of genuine
  equivariant sheaves on topological spaces with Lie group actions.
  In this context, we investigate the proper pushforward functor, which
  accommodates the genuine equivariant Bauer--Furuta invariant.
\end{abstract}
\maketitle

\setcounter{tocdepth}{1}
\tableofcontents

\section{Introduction}

Let \( (X, g, \mathfrak{s}) \) be a closed riemannian
spin\( ^c \) \( 4 \)-manifold equipped with a connection
on the determinant line bundle.
In the proof~\cite{Fur01} of the weaker version of
\emph{the \(\frac{11}{8}\)-conjecture}, Furuta used the
Seiberg--Witten equation to obtain a \(U(1)\)-equivariant%
\footnote{If \(\mathfrak{s}\) comes from a spin structure, then it lifts to a \(\mathrm{Pin}(2)\)-equivariant map.}
proper map between \(\mathbb{R}\)-Hilbert spaces%
\footnote{\(L^2_k\) is the Sobolev completions, \(k>4\), \(T^\ast\) is the cotangent bundle of \(X\), \(S^\pm\) is the positive/negative spinor bundles, and \(\Lambda^{2,+}\) is the \(({+}1)\)-eigenspace for the Hodge star operator on \(\Lambda^2 T^\ast\).}
\begin{equation}\label{eq:SWmap}
    f \colon L^2_k (X; T^\ast \oplus S^+) \times H^0(X;\mathbb{R}) \longrightarrow L^2_{k-1} (X; S^- \oplus \Lambda^{2,+} T^\ast \oplus \Lambda^0 T^\ast)
\end{equation}
and construct a \(U(1)\)-equivariant map
\[S^{2\ind_{\mathbb{C}} \not{D} + b_1(X)} \to S^{b_2^+(X)}\]
between (stable) spheres.
It can be better described as a map
\[\BF_f \colon \S^{2\ind_{\mathbb{C}} \not{D}} \to \S^{b_2^+(X)}\]
between (genuine) equivariant sphere spectra over the base space
\(H^1(X;\mathbb{R})/H^1(X;\Z)\) as defined in~\cite{BF1}.
The resulting map between sphere spectra was shown to be independent of the choices of metrics and connections.
This is referred to as the \emph{Bauer--Furuta invariant} of
\((X,\mathfrak{s})\).

The Bauer--Furuta invariant, as previously mentioned, has been utilized in the partial progress toward the \( \frac{11}{8} \)-conjecture, a geography problem for closed 4-manifolds, which is most extensively studied in~\cite{HLSX18}. 
More recently, a family version of the Bauer--Furuta invariant has been employed to prove the existence of exotic diffeomorphisms as in~\cite{BaragliaKonno19} and subsequent works. However, the Bauer--Furuta invariants for families have yet to be studied in full generality.
It is also worth noting that the Bauer--Furuta invariant should be part of a broader framework that resembles a topological quantum field theory.
To ensure the \((\infty, 0)\)-functoriality of such a topological field theory, at least a comprehensive understanding of the family version of the Bauer--Furuta invariant is required.

Let us briefly recall the construction of the Bauer--Furuta invariant.
The argument presented below is referred to as the
\emph{finite-dimensional approximations}.
Let \(f\colon \mathcal{H}' \to \mathcal{H}\) simply denote the Seiberg--Witten
map \eqref{eq:SWmap}.
Consider a (sufficiently large) finite-dimensional (equivariant) subspace
\(W \subset \mathcal{H}\).
The linear part \(l\coloneqq df\) of the Seiberg--Witten map \(f\) has the
property of being Fredholm, and defines a finite-dimensional subspace
\(W'=l^{-1}(W)\) of \(\mathcal{H}'\). Using the orthogonal projection
\(\pr_W \colon \mathcal{H} \to W\), one can show that%
\footnote{See~\cite[Lemma 3.4]{Fur01} or~\cite[Lemma 2.3]{BF1}.}
there exists a bounded open neighborhood \(N\subset \mathcal{H}'\) of the zero
set \(f^{-1}(0)\) such that the composite
\begin{equation}\label{eq:fin-dimApprox}
  \begin{tikzcd}
    f_W^\mathrm{apprx} \colon W' \cap \overline{N} \ar[r, hookrightarrow]
    & \mathcal{H}' \ar[rr, "f"]
    && \mathcal{H} \ar[rr, "\pr_W"]
    && W
  \end{tikzcd}
\end{equation}
maps \(W' \cap (\overline{N}-N)\) to \(W-\{0\}\).%
\footnote{\(f_W^\mathrm{apprx}\) itself may not be a proper map.}
Thus, via the Pontrjagin--Thom collapse, we obtain a map on spheres 
\begin{equation}\label{eq:BFapprx}
  \begin{tikzcd}[column sep=large]
    S^{W'} \ar[r, "\textrm{collapse}"] & \displaystyle
    \frac{W' \cap \overline{N}}{W' \cap (\overline{N}-N)} \ar[r, "f_W^\mathrm{apprx}"] & \displaystyle \frac{W}{W-\{0\}} \simeq S^W.
  \end{tikzcd}
\end{equation}
Taking \(\displaystyle\colim_{W\to \oo} \Sigma^{\oo-W}\)%
\footnote{Although the functoriality in \( W \) is not very clear at this point. These colimits actually stabilize at some \(W\).}
on both sides, we get the desired map
\begin{equation}\label{eq:BFintro}
  \BF_f\colon \S^{\ind{(df)}} \to \S.
\end{equation}

This construction involves certain inconveniences,
as some desirable properties, such as
functoriality or independence of the choices of
metrics/connections/perturbations, are not so obvious.
Also, generalizing this construction to families over an arbitrary base
may be nontrivial%
\footnote{Especially when a base is noncompact nor non-CW.}
and will require some work.

The purpose of this paper is to present a solid argument for defining the Bauer--Furuta invariants in a completely canonical way.
This new approach is based on the theory of six-functor formalisms and does not rely on the original argument of the Bauer--Furuta construction.
Our starting point is as follows.
We propose, among other things, that, we should avoid using 
\(\S^{\ind(df)}\) as it implicitly incorporates some coordinates on the sphere.
Instead, we suggest using the ``coordinate-free'' form, namely \(f^!(\S)\),
which offers better universality and functoriality.
We summarize the situation as follows.
\begin{summary}
    Consider the topological space of linear Fredholm operators
    \(\Fred(\mathcal{H}',\mathcal{H})\) between \(\mathbb{R}\)-Banach spaces,
    endowed with the norm topology.
    We have the family of linear Fredholm maps as follows.
    \[
      \begin{tikzcd}[row sep=small]
        \Fred(\mathcal{H}',\mathcal{H}) \times \mathcal{H}' \ar[rd, "p"'] \ar[rr, "a"] && \Fred(\mathcal{H}',\mathcal{H}) \times \mathcal{H} \ar[ld] \\
        & \Fred(\mathcal{H}',\mathcal{H}) &
      \end{tikzcd}
    \] Then:
    \begin{enumerate}
        \item The map \(a\) is locally proper. Thus, the functor \(a^!\) is well-defined for sheaves of spectra.
        \item The resulting sheaf \[a^!(\unit) \in \Sh\left(\Fred(\mathcal{H}',\mathcal{H}){\times} \mathcal{H}'\,;\,\Sp\right)\] is locally constant, also constant along \(p\), and is of the form \(\Th(\ind(a))\), the Thom spectrum sheaf for the (families) index \(\ind(a)\), which lives in the \(ko\)-sheaf-cohomology.
        \item In fact, the resulting monodromy local system \[
        p_\ast a^!(\unit) \colon \Pi_\oo(\Fred(\mathcal{H}',\mathcal{H})) \to \Sp
        \] can model the J-homomorphism functor \(J\colon \Omega^\oo ko \to \Sp.\)
        \item Moreover, the \emph{linearization hypothesis} holds in this context: Let \(f\colon \mathcal{H}' \to \mathcal{H}\) be a \(C^1\)-differentiable map which has Fredholm differentials. Then \(f\) is also locally proper, and we have an identification \[
        f^!(\unit) \simeq (df)^!(\unit) \simeq \Th(\ind(df)).
        \]
    \end{enumerate}
    These observations will perfectly elucidate the appearance of
    \(\S^{\ind(df)}\) in 
    \eqref{eq:BFintro}, and the
    Bauer--Furuta map itself can be given as the usual proper pushforward map
    on the Borel--Moore homology spectra.
\end{summary}

Since we aim to handle those infinite-dimensional manifolds,
it is crucial to extend the six-functor formalism which was originally
developed for locally compact Hausdorff spaces to include locally proper maps
(between possibly locally non-compact spaces).
It is also remarkable that most attempts to construct homotopy theoretic
invariants for families have opted to use the \emph{local systems} of spectra
rather than sheaves of spectra.
For our purpose, we find that the sheaf-based approach is significantly more
natural and effective than local systems.
This is particularly evident in cases involving locally proper maps between
Banach spaces, where the lack of geometrically convenient compactifications
makes understanding Borel--Moore homology through local systems highly
impractical.

We would like to warn the reader that this paper mainly focuses on the
\emph{non-equivariant} version of the Bauer--Furuta construction, which is
the content of \cref{section:BF}.
The group equivariance, however, plays a crucial role in the applications of
the Bauer--Furuta invariants to low-dimensional topology.
To obtain such a genuine equivariant refinement in a parallel way, we need
some genuine equivariant six-functor formalism, which we hope to exist but is
not fully developed yet.
In \cref{section:SHG}, we will outline a portion of such a genuine equivariant
six-functor formalism, and we at least define the genuine equivariant version
of the Bauer--Furuta construction.
Detailed studies on the properties of the genuine equivariant Bauer--Furuta
construction, as defined so, will be explored in a subsequent paper.

\begin{remark*}
  By its very construction,
  the Seiberg--Witten map, considered as a map between Banach vector spaces, can be roughly seen as a polynomial map (in infinitely many variables).
  Thus,
  one might hope that the Bauer--Furuta map takes the form of the Betti realization of a map in some motivic stable category such as \( \SH(\Spec(\RR)/\Pin(2)) \). 
  Such a motivic refinement of the Bauer--Furuta invariant is expected
  to resolve the known nilpotence phenomena of the Bauer--Furuta invariant, which is one of the obstructions
  to provide the \( \frac{11}{8} \)-inequality from the Bauer--Furuta invariant.

  Our reformulation of the Bauer--Furuta invariant may also be of interest in this direction.
  In fact, 
  the original Bauer--Furuta construction, as we presented in \eqref{eq:BFapprx}, involves the Pontrjagin--Thom collapsing map, which exists by excision in algebraic topology.
  The motivic stable homotopy theory, on the other hand, supports excision for Zariski open subspaces and not for arbitrary open subspace of the Betti realization of Affine planes.
  For this reason, it was hard to imagine the existence of a motivic lift of the original Bauer--Furuta construction.
  In this paper, however, we will see that the Bauer--Furuta map can be defined solely in terms of the proper pushforward.
  This approach bypasses the need for excision theorems that are specific to topology.
\end{remark*}

\subsection*{Acknowledgments}
The author would like to thank Lars Hesselholt for fruitful discussions and advice, especially on the use of six-functor formalism.
%
The author would also like to thank Yosuke Morita, Ko Aoki, Bastiaan Cnossen, Mikio Furuta, Ryomei Iwasa, Hokuto Konno, Jin Miyazawa and Zhouli Xu for various comments and valuable information.
In particular, the author is grateful to Bastiaan Cnossen and Marco Volpe for helpful discussions about the potential of genuine equivariant six-functors and for sharing details about their ongoing research projects.
This project was supported by JSPS KAKENHI Grant Number 24KJ0795 and the WINGS-FMSP program at the Graduate School of Mathematical Science, the University of Tokyo.

\subsection{Notations}
\emph{Categories} mean \((\oo,1)\)-categories.
\(\cat\), \(\an\), \(\Pr^\L\), \(\Cat\) and \(\Sp\) denote the category 
of small categories, the category 
of small \((\oo,0)\)-categories, the category
of presentable categories (with morphisms the left adjoint functors), the category 
of locally small categories, and the category 
of spectra, respectively. 
\(\Pr^\L\) is equipped with the symmetric monoidal structure which gives
the tensor product of presentable categories. 
In a \emph{presentably symmetric monoidal category}, i.e.~an object of \(\CAlg(\Pr^\L)\), we use \(\otimes\),
\(\unit\), and \(\hom\) to denote the tensor product, the
monoidal unit and the internal hom, respectively.
For example, \(\Sp\) is equipped with the presentably symmetric monoidal
structure, which is an idempotent algebra in \(\Pr^\L\) corresponding
to the localization onto the full subcategory \(\Pr^\L_\st\) of
\emph{stable presentable categories}. The monoidal unit for \(\Sp\), the
\emph{sphere spectrum}, is also denoted by \(\S\).

\section{The (non-equivariant) Bauer--Furuta map} \label{section:BF}

\begin{convention}\label{conv:allHaus}
    In this section, all topological spaces are assumed to be Hausdorff
    for simplicity%
    \footnote{Another option is to assume that every map we will care is separated.},
    so that universally closed maps are proper and in particular locally
    proper 
    (\cref{conv:(loc)proper}).

    Sheaves will always take values in \(\Sp\), the category of spectra.
    For a topological space \(Y\), \(\Sh(Y)\) will denote the category of
    sheaves of spectra \(\Sh(Y;\Sp)\) defined in~\cite[6.2.2.6, 6.3.5.15]{HTT},
    see \cref{notation:Sh(Y)} also.
    We use the six operations \(f^\ast, f_\ast, f_!, f^!, \otimes, \hom\) on
    those sheaves, which are summarized in \cref{section:top6FF}
    (especially in \cref{thm:top6FF}). 
    A base topological space will often be denoted by \(S\).
\end{convention}

\subsection{From Borel--Moore functoriality to Bauer--Furuta}

\begin{construction}\label{const:BF}
  Let \(f\colon L \to Y\) be a proper map between topological spaces over
  a base \(S\).
  \begin{equation}\label{eq:f/S}
      \begin{tikzcd}[row sep = small]
          L \ar[rr, "f"] \ar[rd, "r"'] & & Y \ar[dl, "q"] \\
          & S &
      \end{tikzcd}
  \end{equation}
  Since \(f\) is proper, we have the functor \(f^!\) right adjoint to \(f_\ast\).
  Thus, we have the map (in \(\Sh(S)\)) of the following form.
  \[
    \BF_f \colon r_\ast f^!(\unit) = q_\ast f_\ast f^!(\unit) \xrightarrow{q_\ast(\mathrm{counit})} q_\ast(\unit)
  \]
  {This will be our definition of the \emph{Bauer--Furuta map}.}
\end{construction}

It is nothing but the usual proper pushforward map
\(\H^\BM_\bullet(L/Y;\S) \to \H^\BM_\bullet(Y/Y;\S) = \unit_{\Sh(Y)}\)
on the Borel--Moore homology, applied to the functor
\(q_\ast\colon \Sh(Y) \to \Sh(S)\).
An immediate observation is that this construction is pullback-stable:
\begin{remark}
  \label{rem:BFisPBstable}
  Consider the following diagram of topological spaces in which \(f\) is proper.
  \[
    \begin{tikzcd}
      L' \ar[r, "f'"] \ar[d, "g'"'] \ar[dr, phantom, very near start, "\lrcorner"]	& Y' \ar[d, "g"] \ar[dr, "q'"] \\
      L \ar[r, "f"']	& Y \ar[r, "q"'] & S
    \end{tikzcd}
  \]
  Then we have a commutative square of the following form.
  \[
    \begin{tikzcd}[row sep = small]
      f_\ast f^! \ar[r, "\mathrm{counit}"] \ar[d, "f_\ast f^!(\mathrm{unit})"'] & \id \ar[d, "\mathrm{unit}"] \\
      f_\ast f^! g_\ast g^\ast \ar[r] & g_\ast g^\ast
    \end{tikzcd}
  \]
  Here the bottom horizontal map is the composite of the proper basechange
  isomorphism and the counit map as follows.
  \[\begin{tikzcd}[column sep = large]
    f_\ast f^! g_\ast g^\ast & \ar[l, "\sim"', "\mathrm{}"] f_\ast g'_\ast f'^! g^\ast = g_\ast f'_\ast f'^! g^\ast \ar[r, "g_\ast(\mathrm{counit})"] & g_\ast g^\ast
  \end{tikzcd}\]
  Here the first map is the isomorphism obtained by taking the right adjoints
  of the (proper) basechange isomorphism
  \( g^\ast f_\ast \simeq f'_\ast g'^\ast \).
  In particular, we have a diagram
  \begin{equation}\label{eq:g^star(BF)}
    \begin{tikzcd}[row sep = small]
      \mathllap{\BF_f \colon} q_\ast f_\ast f^!(\unit) \ar[r] \ar[d] & q_\ast \unit \ar[d] \\
      \mathllap{\BF_{f'} \colon} q'_\ast f'_\ast f'^!(\unit) \ar[r] & q'_\ast \unit.
    \end{tikzcd}
  \end{equation}
  Note that if \(g\) is contractible (\cref{dfn:contractible}), then both
  vertical maps are isomorphisms,
  so that we identify
  \(\BF_f\) with \(\BF_{f'}\).

  The argument here is functorial at least up to homotopy:
  If we further basechange along \(h\colon Y'' \to Y'\), the commutative
  squares given above compose and form a rectangle.
\end{remark}

The rest of this section is devoted to explaining how this map \( \BF_f \)
subsumes the construction of~\cite{BF1}.
We here record the abstraction of basic properties satisfied by the
Seiberg--Witten map \eqref{eq:SWmap}.

\begin{remark}\label{notation:abstractSWmap}
    We will be interested in the following situation:
    \begin{equation*}
        \begin{tikzcd}[row sep=small]
            L \ar[rr, "f"] \ar[rd, "r"'] & & Y \ar[dl, "q"] \\
            & S &
        \end{tikzcd}
    \end{equation*}
    \(f\) is a \(C^1\)-differentiable (\cref{dfn:bundleC^1}) and proper map
    between Banach vector bundles (\cref{conv:HilbBdl}) over \(S\) such that
    the (vertical) differential \(df \colon L \times_S L \to L \times_S Y\) is a linear Fredholm map at every point. 
    In practice, it is often the case that any differentials at two distinct
    points only differ by a compact linear operator.
\end{remark}

\begin{example}
    [{\cite{BF1}}] \label{eg:familySWmap}
    Let 
        \(\begin{tikzcd}[row sep=small, column sep=small]
            E' \ar[rr, "f"] \ar[rd, "r"'] & & E \ar[dl, "q"] \\
            & S &
        \end{tikzcd}\)
    be a map between Hilbert vector bundles which is given as \(f=l+c\)
    for \(l\) a linear Fredholm map and \(c\) a compact map
    (in the sense that \(c\)  maps a bounded disk bundle of \(E'_K\) to a
    relatively compact subset in \(E_K\) for each compact subset \(K \subset S\)).
    Assume that on each fiber, the map \(f_x \colon E'_x \to E_x\) has bounded
    preimages of bounded sets.%
    \footnote{This is the case for the actual Seiberg--Witten map \eqref{eq:SWmap}.}
    Then~{\cite[Lemma 2.2]{BF1}} says that each map \(f_x\colon E'_x \to E_x\)
    on fibers is proper.
    In the same way, it follows that if \(S\) is compactly generated, then
    \(f\colon E' \to E\) is a proper map.
    See \cref{lem:familyProper} for a discussion.
\end{example}

\begin{remark}
    A typical example of a base \(S\) that will be considered in the
    Bauer--Furuta construction is either the torus
    \(H^1(X;\RR)/H^1(X;\Z)\), the space of riemannian metrics and connections,
    or the space of spin\(^c\)-structures.
    We will remark on this point in \cref{section:SW}. In particular, we will
    need a locally non-compact base space \(S\).
\end{remark}

We have a fundamental decomposition for Fredholm maps as follows.

\begin{lemma}\label{lem:factorizaton}
    Let \(f\colon L \to Y\) be a \(C^1\)-differentiable map between Banach
    vector bundles over \(S\) with Fredholm differentials.
    Then locally on the source, the map \(f\) factors as a composite of the
    following form.
    \begin{equation*}
        \begin{tikzcd}
            L\; \ar[rrrd, "f"', start anchor=south] \ar[r, "\supset", phantom] & U \ar[rr, "e"] & & U \times_Y T \ar[d, "p"] \\
            &&& Y
        \end{tikzcd}
    \end{equation*}
    Here \(T\) is a finite-rank vector bundle over an open subspace of \(Y\),
    \(e\) is the zero-section, and \(p\) is cohomologically smooth.
\end{lemma}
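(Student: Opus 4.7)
The plan is to work locally near a chosen point $x_0 \in L$, use the Fredholm condition on the vertical differential $df_{x_0}$ to extract a finite-dimensional complement of its image, and realize $f$ near $x_0$ as the zero-section of a trivial finite-rank bundle followed by a fiberwise-addition map whose cohomological smoothness reduces, via the $C^1$ Banach implicit function theorem, to that of a projection from a finite-rank vector bundle.

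First I would trivialize. After restricting $S$ to a neighborhood of $s_0 := r(x_0)$, we may assume $L = S \times \mathcal{L}$ and $Y = S \times \mathcal{Y}$ for Banach spaces $\mathcal{L}, \mathcal{Y}$, so that $f$ becomes a $C^1$ map over $S$ and $df_{x_0} \colon \mathcal{L} \to \mathcal{Y}$ is Fredholm. Pick a finite-dimensional subspace $C \subset \mathcal{Y}$ with $\mathcal{Y} = \im(df_{x_0}) \oplus C$. Because the set of bounded operators $A \colon \mathcal{L} \to \mathcal{Y}$ with $\im(A) + C = \mathcal{Y}$ is open in the norm topology, after further shrinking $S$ and choosing a suitable open $V \subset Y$ around $y_0 := f(x_0)$, the same $C$ complements $\im(df_{(s,x)})$ for every $(s,x)$ with $f(s,x) \in V$. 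Set $T := V \times C \to V$, a trivial rank-$\dim C$ bundle on the open $V \subset Y$, shrink $L$ to an open $U$ around $x_0$ with $f(U) \subset V$, and identify $U \times_Y T$ with $U \times C$ via $(x,(f(x),c)) \leftrightarrow (x,c)$. Define
\[
p \colon U \times_Y T \longrightarrow Y, \qquad p(x, c) := f(x) + c,
\]
using fiberwise Banach-space addition on $\mathcal{Y}$; in the affine setting of a Banach bundle this is just the exponential map of the subbundle. The zero-section $e \colon U \to U \times_Y T$, $x \mapsto (x, 0)$, then satisfies $p \circ e = f|_U$ tautologically.

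It remains to verify cohomological smoothness of $p$. Its vertical differential at $(x_0, 0)$ is $(\ell, c') \mapsto df_{x_0}(\ell) + c'$, which is a split surjection with kernel $\ker(df_{x_0}) \oplus 0$ of finite dimension; splitting uses that $\ker(df_{x_0})$ is a finite-dimensional, hence complemented, subspace of $\mathcal{L}$, and that the restriction of $df_{x_0}$ to any complement is an isomorphism onto $\im(df_{x_0})$ by the open mapping theorem. The Banach $C^1$ implicit function theorem then furnishes, on a possibly smaller neighborhood, a $C^1$-diffeomorphism under which $p$ becomes the projection from a trivial finite-rank vector bundle on an open subset of $\mathcal{Y}$ onto its base; such projections are cohomologically smooth by the general properties of the six-functor formalism recalled earlier, and so is $p$. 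The main obstacle, I expect, is bookkeeping the family dependence over $S$: the key stability statement is that the open condition $\im(df_{(s,x)}) + C = \mathcal{Y}$ persists under small perturbations of $(s,x)$, which relies on the norm continuity of the differential coming from the $C^1$-hypothesis on bundle maps. A secondary technicality is the use of the $C^1$ rather than smooth implicit function theorem, but this is classical for Banach manifolds, and once the local identification of $p$ with a finite-rank bundle projection is in hand, the cohomological smoothness of $p$ is automatic.
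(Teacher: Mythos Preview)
Your proposal is correct and follows essentially the same route as the paper: trivialize locally, pick a finite-dimensional complement $C$ (the paper's $T_0$) of $\im(df_{x_0})$, define $p(x,c)=f(x)+c$, and use the $C^1$ submersion/implicit function theorem to see that $p$ is locally a projection with finite-dimensional fiber, hence cohomologically smooth. The only superfluous step is your insistence that $C$ remain complementary to $\im(df_{(s,x)})$ for all nearby $(s,x)$; the paper skips this because surjectivity of $dp$ at the single point $(x_0,0)$ already suffices for the local normal form.
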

\begin{proof}
    Given a point \(v\in L_x \subset L\) on the source, let \(T_0\) denote a closed linear subspace of \(Y_x\) complementary to \(\im(d_vf)\), which exists since the derivative is Fredholm.
    Choose a neighborhood \(S_0\) of \(x\) and trivializations \(L\mid_{S_0} \cong S_0 \times L_x\) and \(Y\mid_{S_0} \cong S_0\times Y_x\).
    Define the map \(p\colon S_0 \times L_x \times T_0 \to S_0 \times Y_x\) by the formula
    \[
    p(x', v', w) = (x', f_{}(v') + w),
    \]
    so that it is of \(C^1\)-class and the (vertical) derivative at the point \((x,v,0)\) is surjective and has kernel isomorphic to the kernel of \(d_vf\). 
    Therefore, by virtue of \cref{cor:submersion}, we find a vector space \(K_0\) isomorphic to \(\ker(d_v f)\) and open neighborhoods \(V\) of \(f(v)\) in \(Y\), \(E\) of \(0\) in \(T_0\) and \(U\) of \(v\) in \(L\) such that the map \(p\colon S_0 \times_S L \to Y\) is locally of the following form. \[
    \begin{tikzcd}
        U \times E \ar[r, "\cong", phantom] & V \times K_0 \ar[r, "\pr"] & V
    \end{tikzcd}
    \] Since \(K_0\) is finite-dimensional, it follows that \(p\) is cohomologically smooth (\cref{prop:PD}).
    The desired factorization is obvious from the definition of \(p\).
\end{proof}

We have the following corollaries.

\begin{corollary}
    \label{cor:FredLocProper}
    Let \(f\) be as in \cref{lem:factorizaton}.
    Then
    \(f\) is locally proper (\cref{conv:(loc)proper}). In particular, the functor \(f^!\colon \Sh(Y) \to \Sh(L)\) is well-defined (\cref{thm:top6FF}).
\end{corollary}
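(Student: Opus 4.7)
The plan is to apply \cref{lem:factorizaton} and reduce local properness of $f$ to the formal closure properties of locally proper maps recorded in the six-functor formalism of \cref{section:top6FF}; no genuinely new input is needed beyond these two ingredients.

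First I would fix an arbitrary point of $L$ and invoke \cref{lem:factorizaton} to obtain an open neighborhood $U \subset L$ of that point together with a factorization $f|_U = p \circ e$, where $e \colon U \hookrightarrow U \times_Y T$ is the zero section of the pullback (along $f|_U$) of a finite-rank vector bundle over an open subspace of $Y$, and $p \colon U \times_Y T \to Y$ is cohomologically smooth. The zero section of a Banach vector bundle is a closed embedding, hence proper in our Hausdorff setting (\cref{conv:allHaus}); and by the conventions recalled in \cref{conv:(loc)proper} and \cref{thm:top6FF}, both proper maps and cohomologically smooth maps are locally proper.

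Second I would use the fact that local properness is stable under composition and is local on the source: as the chosen point varies through $L$, the neighborhoods $U$ form an open cover of $L$ on which each $f|_U = p \circ e$ is locally proper, whence $f \colon L \to Y$ itself is locally proper. The functor $f^! \colon \Sh(Y) \to \Sh(L)$ is then supplied by \cref{thm:top6FF}, and may be described locally on each $U$ as the composite $e^! \circ p^!$, which glues canonically because the factorization varies coherently with the choice of point.

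There is no substantive obstacle in this corollary; the whole geometric content has been absorbed into \cref{lem:factorizaton}, and the closure properties of the class of locally proper maps (stability under composition, locality on source, inclusion of proper and cohomologically smooth maps) are built into the formalism of \cref{section:top6FF}. The main thing to verify at the level of bookkeeping is simply that the local factorizations provided by \cref{lem:factorizaton} are compatible enough that the locally defined $e^! \circ p^!$ agree on overlaps, which follows from the uniqueness of the six-functor formalism applied to each overlap.
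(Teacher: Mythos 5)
Your proposal is correct and follows essentially the same route as the paper: invoke \cref{lem:factorizaton} to write $f$ locally as the proper zero-section $e$ followed by the (topologically smooth, hence locally proper) projection $p$, and then use that local properness is local on the source and stable under composition. The closing worry about gluing $e^!\circ p^!$ on overlaps is unnecessary, since $f^!$ is supplied globally by \cref{thm:top6FF} once $f$ is known to be separated locally proper.
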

\begin{proof}
    The property of being locally proper can be checked locally on the source and the target.
    By \cref{lem:factorizaton},
    \(f\) is locally a composition of \(e\), which is proper, and \(p\), which is locally proper.
\end{proof}

Before stating the next corollary, let us recall the construction of Thom spectra.

\begin{construction}\label{const:Th(ind(l))}
  For a finite-rank vector bundle \( q\colon V \to S \), define the \emph{Thom spectrum sheaf} \( \Th(V) \) to be \( q_\ast q^!(\unit)\in \Sh(S) \).
  It is also isomorphic to \( q_\sharp \cof(j_!(\unit)\to \unit) \) where \( j\colon V^\times \to V \) is the complement of the zero-section, which is informally interpreted as the homotopy type of the cofiber \( V/V^\times \).

  This construction promotes to a monoidal functor defined on the groupoid of finite-rank vector bundles
  \[
    \Th \colon (\Vect(S)^\simeq, \oplus) \to (\Sh(S), \otimes)
  \]
  by employing the functoriality of the six-functor formalism (\cref{thm:top6FF})
  sending a correspondence \( S \leftarrow V \to S \) to a linear functor \( q_!q^\ast \) which is left adjoint to \( q_\ast q^! \).
  Since it also takes values in \( \otimes \)-invertible objects, the functor \( \Th \) extends to a monoidal functor defined on virtual vector bundles.

  Next, consider a linear Fredholm map \( l\colon \mathcal{H}' \to \mathcal{H} \) between Banach vector bundles over \( S \) and let \( r \colon \mathcal{H}' \to S \) denote the projection.
  We let \( r_\ast l^!(\unit) \) be suggestively denoted by \( \Th(\ind(l)) \), which we refer to as the \emph{Thom spectrum sheaf of families index}.
\end{construction}
\begin{remark}\label{rem:familiesindex}
  We here remark and justify our terminology.
  If \( S \) is at least compact (Hausdorff), then the functor \[
  \Th(\ind({-}))\colon \pi_0(\Fred(S)^\simeq) \to \Sh(S)
  \]
  does factor through the Atiyah--Singer families index, which takes values in \( \pi_0 \mathit{KO}(S) \).
  To see this, we observe that if \(S\) is compact, then there exists a finite-rank subbundle \( W \) of \( \mathcal{H} \) over \( S \) that is transverse to \( l \) in the sense that the image of \( l \) and \( W \) span \( \mathcal{H} \). 
  Thus, \( V\coloneqq W \times_{\mathcal{H}}\mathcal{H}' \) is a finite-rank subbundle of \( \mathcal{H}' \) due to transversality and being Fredholm.
  Then \( l \) factors as \( l=pe \) as in \cref{lem:factorizaton} where \( e\colon \mathcal{H}' \to W\times_S \mathcal{H}' \) is the zero-section and \( p \) has kernel \( V \).
  By \cref{lem:PicardUpperShriek}, \( l^!(\unit)=e^!p^!(\unit) \) is isomorphic to \( e^!(\unit)\otimes e^\ast p^!(\unit) \), which in turn is identified with \( r^\ast\Th(-W) \otimes \Th(e^\ast T_p) \simeq r^\ast\Th(V-W) \).
  Therefore, \( r_\ast l^!(\unit) \) is isomorphic to the Thom spectrum sheaf of the virtual vector bundle \( V-W \), which is by definition the Atiyah--Singer index \( \ind(l)\in \pi_0 \mathit{KO}(S) \).
\end{remark}

\begin{corollary}
  [Descent to local systems]\label{cor:BFlcnst}
  Let \(f\) be as in \cref{lem:factorizaton}.
  Then
  \(f^!(\unit)\) is locally constant and \(\otimes\)-invertible.
  Therefore, the map \(\BF_f\) lies over the full subcategory of locally constant sheaves on \(S\).
\end{corollary}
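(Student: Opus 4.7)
The plan is to exploit the local factorization from \cref{lem:factorizaton}. Since both ``locally constant'' and ``\( \otimes \)-invertible'' can be checked locally on \( L \), I may assume that \( f = p\circ e \) with \( e\colon U \to U\times_Y T \) the zero section of a finite-rank vector bundle \( T \) on an open subspace of \( Y \) and with \( p \) cohomologically smooth, whose vertical tangent \( T_p \) is of finite rank equal to \( \dim \ker(d_v f) \) in the local trivialization.

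First I would compute \( f^!(\unit) \simeq e^! p^!(\unit) \). By \cref{lem:PicardUpperShriek}, applied in the same way as in \cref{rem:familiesindex}, this decomposes as
\[
f^!(\unit) \;\simeq\; e^!(\unit)\otimes e^\ast p^!(\unit).
\]
Since \( p \) is cohomologically smooth with finite-rank \( T_p \), one has \( p^!(\unit)\simeq \Th(T_p) \), which is locally constant and \( \otimes \)-invertible. Since \( e \) is the zero section of the finite-rank bundle \( T \), one has \( e^!(\unit)\simeq \Th(-T) \), which is again locally constant and invertible. Hence their tensor product \( f^!(\unit) \) is locally constant and \( \otimes \)-invertible, proving the first half of the statement. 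Moreover, inspecting the explicit local form of \( p \) furnished by the proof of \cref{lem:factorizaton}, the finite-rank data \( T_p \) and \( T \) locally coincide with the constant vector spaces \( \ker(d_v f) \) and \( T_0 \subset Y_x \); so locally \( f^!(\unit) \) is isomorphic to \( r^\ast \Th(\xi) \) for a (trivial) virtual bundle \( \xi \) on \( S \). In particular, \( f^!(\unit) \) is ``constant along \( r \)''.

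For the claim that \( \BF_f \) lies over locally constant sheaves on \( S \), I would argue that both the source \( r_\ast f^!(\unit) \) and the target \( q_\ast(\unit) \) of \( \BF_f \) are locally constant on \( S \). The target is locally constant, indeed \( \simeq \unit_S \), because \( q \) is a Banach vector bundle, i.e.\ a locally trivial fibration with contractible Banach fibers, so pushforward of the unit is the unit. For the source, the identification \( f^!(\unit)\simeq r^\ast \Th(\xi) \) combined with the projection formula yields, locally on \( S \),
\[
r_\ast f^!(\unit) \;\simeq\; r_\ast r^\ast \Th(\xi) \;\simeq\; \Th(\xi)\otimes r_\ast\unit \;\simeq\; \Th(\xi),
\]
again using that \( r \) is a Banach bundle with contractible fibers. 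Thus \( r_\ast f^!(\unit) \) is locally constant and \( \otimes \)-invertible on \( S \).

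The step I expect to be the main obstacle is the pushforward computation \( r_\ast \unit \simeq \unit_S \) (and likewise \( q_\ast \unit \simeq \unit_S \)) for the infinite-dimensional Banach bundle \( r\colon L \to S \). Although this is morally forced by the contractibility of the Banach fibers, it is a nontrivial input because \( L \) is not locally compact, so the assertion rests on the extension of the six-functor formalism to locally proper maps between possibly non-locally-compact spaces established earlier in the paper, together with the projection formula in that generality.
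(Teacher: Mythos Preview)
Your computation of \(f^!(\unit)\) via the local factorization \(f=p\circ e\) and \cref{lem:PicardUpperShriek} is correct and is exactly what the paper does.

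For the second half, however, there are two issues. First, you invoke ``the projection formula'' to write \(r_\ast r^\ast\Th(\xi)\simeq\Th(\xi)\otimes r_\ast\unit\); but the projection formula in the paper (\cref{lem:ProperProjFormula}) is proved only for \emph{proper} maps, and the Banach bundle \(r\colon L\to S\) is not proper. You do not need it: since \(r\) is contractible (\cref{dfn:contractible}), the functor \(r^\ast\) is fully faithful by \cref{cor:contractible}, and \(r_\ast r^\ast\Th(\xi)\simeq\Th(\xi)\) directly. This also shows that your ``main obstacle'' \(r_\ast\unit\simeq\unit_S\) is not an obstacle at all and does not rely on the locally proper six-functor formalism; it is pure \(\AA^1\)-invariance.

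Second, and more seriously, the identification \(f^!(\unit)\simeq r^\ast\Th(\xi)\) you obtain is local \emph{on \(L\)}, coming from the factorization of \cref{lem:factorizaton}; you then write ``locally on \(S\)'' as if this were the same thing. It is not: over a single trivializing open in \(S\) you still only know \(f^!(\unit)\) is \emph{locally} constant along \(r\), and you must upgrade this to \emph{globally} constant along \(r\) before you can compute \(r_\ast\). This is precisely the content of \cref{lem:globallyconstant}. The paper packages both of these points into a single invocation of \cref{lem:VBinvariance}, which gives \(r_\ast f^!(\unit)\simeq x^\ast f^!(\unit)\) for the zero section \(x\colon S\to L\); local constancy on \(S\) is then immediate because \(x^\ast\) of a locally constant sheaf is locally constant.
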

\begin{proof}
  Taking a local decomposition \( fj_U=pe \) provided by \cref{lem:factorizaton} and argue as in \cref{rem:familiesindex}, the sheaf \(j_U^\ast f^!(\unit)\) is locally of the form \(
  \Th(-N_e) \otimes \Th(e^\ast T_p),
  \)
  which is the Thom spectrum sheaf of a virtual vector bundle and thus a locally constant and invertible sheaf.

  Then by \cref{lem:VBinvariance}, \(r_\ast f^!(\unit)\) is equivalent to \(x^\ast f^!(\unit)\) for, say, the zero section \(x\colon S \to L\). In particular, it is also locally constant.
\end{proof}

The preceding corollary may be pleasant for the reason that over a homotopically well-behaved topological space \(S\), locally constant sheaves correspond to local systems, also known as \emph{parameterized spectra} due to \cref{thm:monodromy}.

\begin{construction}\label{rem:ExplicitLinearization}
  Let \( f \) be as in \cref{lem:factorizaton} and
  assume further for simplicity that any differentials at two distinct points differ only by a compact linear operator.
  Let \( df \) denote the differential \( L\times_S L \to L\times_S Y \),
  which is a linear map between Banach vector bundles
  \( p\colon L\times_S L \to L \) and \( L\times_S Y \to L \).
  We construct an isomorphism \( f^!(\unit) \simeq p_\ast (df)^!(\unit) \) as follows.
  Define a map \[
    \phi \colon [0,1] \times L \times_S L \to [0,1] \times L \times_S Y
  \]
  by the formula \( \phi(t,x,v) = (t, (1-t)f(v) + t (d_x f)v) \).
  By our assumption on \( f \), \( \phi \) has Fredholm differentials at each point.
  We repeat the above arguments for \(\phi\) to conclude that \(\phi^!(\unit)\) is locally constant
  (and hence constant along the \( [0,1] \)-direction by \cref{lem:globallyconstant}), so that
  \[
    (df)^!(\unit) \simeq i_1^\ast \phi^!(\unit) \simeq i_0^\ast \phi^!(\unit) \simeq p^\ast f^!(\unit)
  \]
  Here we used \cref{lem:basecahgeofdualizingsheaves} in the first and the third identifications.
\end{construction}

\begin{corollary}
    [Linearization hypothesis]
    Let \(f\) be as in \cref{rem:ExplicitLinearization}.
    Then
    \(f^!(\unit)\) can be identified with
    \(\Th(\ind(df))\) the Thom spectrum sheaf of the index of the Fredholm operator \(df\).
    Taking a section \(x\colon S \to L\) of \(r\), the \(r_\ast f^!(\unit)\) can be identified with \(\Th(\ind(d_xf))\).
    The latter identification works as well if only \(f\) is locally proper and 
    \(C^1\)-differentiable near the section \(x\colon S \to L\) with Fredholm differentials.
\end{corollary}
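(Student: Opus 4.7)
The plan is to derive the identification $f^!(\unit) \simeq \Th(\ind(df))$ as a direct consequence of \cref{rem:ExplicitLinearization}, then specialize along the section $x$ by vector bundle invariance, and finally remove the global hypotheses by passing to a sufficiently small neighborhood of $x(S)$.

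For the first identification, I would observe that by \cref{const:Th(ind(l))}, $\Th(\ind(df))$ is by definition $p_\ast(df)^!(\unit)$. Applying $p_\ast$ to the equivalence $(df)^!(\unit) \simeq p^\ast f^!(\unit)$ produced in \cref{rem:ExplicitLinearization} gives $\Th(\ind(df)) \simeq p_\ast p^\ast f^!(\unit)$. Since $f^!(\unit)$ is locally constant by \cref{cor:BFlcnst} and $p\colon L\times_S L \to L$ is a Banach vector bundle, the vector bundle invariance lemma (already invoked in the proof of \cref{cor:BFlcnst}) identifies $p_\ast p^\ast f^!(\unit)$ with $f^!(\unit)$, yielding $f^!(\unit) \simeq \Th(\ind(df))$.

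For the section identification, I would apply the same vector bundle invariance to $r\colon L\to S$ with section $x$ to get $r_\ast f^!(\unit) \simeq x^\ast f^!(\unit)$. Pulling the first identification back along $x$ and invoking the functoriality of $\Th$ and $\ind$ under pullback (\cref{const:Th(ind(l))}), the right-hand side becomes $x^\ast \Th(\ind(df)) \simeq \Th(\ind(d_x f))$, since restricting the family $df$ along $x$ recovers the pointwise Fredholm operator $d_x f$ (essentially proper base change applied to the definition of $\Th(\ind({-}))$).

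For the final weakened statement, the plan is to localize the above argument. The identification $r_\ast f^!(\unit) \simeq x^\ast f^!(\unit)$ still holds as it only depends on $r$ being a Banach vector bundle and $x$ a section, and the right-hand side depends only on the germ of $f$ along $x(S)$. I would therefore pass to an open neighborhood $U \subset L$ of $x(S)$ on which $f$ is $C^1$-differentiable and locally proper with Fredholm differentials, and rerun the homotopy argument of \cref{rem:ExplicitLinearization} inside $U$. The hard part will be verifying that, without the global compact-difference hypothesis, the vertical differential $(1-t)\,df_y + t\,d_{r(y)}f$ of the interpolation $\phi$ remains Fredholm on $U$; I expect this to follow from the continuity of $y\mapsto df_y$ and the openness of the Fredholm locus in the norm topology of bounded operators, so that after shrinking $U$ pointwise in the base direction each $df_y$ lies in a norm-ball around $d_{r(y)}f$ contained in the Fredholm stratum and convex combinations remain Fredholm. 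This localization step is the main obstacle; everything else is a repackaging of the machinery already in place.
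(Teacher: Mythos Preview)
Your argument for the first two claims is exactly the paper's: it simply cites \cref{rem:ExplicitLinearization} for the first identification and \cref{lem:VBinvariance} for the second, and you have correctly unpacked both (indeed you make explicit the step $x^\ast\Th(\ind(df))\simeq\Th(\ind(d_xf))$ that the paper leaves implicit).

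For the third claim the paper's proof is silent, so you are on your own, and your plan contains a genuine gap. You assert that ``$r_\ast f^!(\unit)\simeq x^\ast f^!(\unit)$ still holds as it only depends on $r$ being a Banach vector bundle and $x$ a section''---this is false. \Cref{lem:VBinvariance} is stated only on $\Sh_{\lcnst/S}(L)$, and the local constancy of $f^!(\unit)$ was supplied by \cref{cor:BFlcnst}, whose hypothesis is global $C^1$-differentiability with Fredholm differentials. Under the weakened assumption ($C^1$ with Fredholm differentials only near $x(S)$) you have no control over $f^!(\unit)$ away from that neighborhood, so the invariance lemma does not apply to it on all of $L$. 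The step you flag as the ``main obstacle''---Fredholmness of the interpolated differential on a small $U$---is actually fine (norm-continuity of $y\mapsto d_yf$ plus openness of the Fredholm locus gives it after shrinking $U$); the real obstacle is the one you dismissed.

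A repair along your lines would be to choose $U$ fiberwise convex so that $r|_U\colon U\to S$ itself satisfies the hypotheses of \cref{lem:globallyconstant}; then \cref{cor:BFlcnst} and \cref{lem:VBinvariance} applied on $U$ give $(r|_U)_\ast f_U^!(\unit)\simeq x^\ast f^!(\unit)\simeq\Th(\ind(d_xf))$. But to recover the literal statement you would still need the restriction map $r_\ast f^!(\unit)\to (r|_U)_\ast f_U^!(\unit)$ to be an isomorphism, and nothing in the hypotheses forces that when $f^!(\unit)$ is uncontrolled outside $U$.
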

\begin{proof}
    The fist claim follows from the deformation argument given in \cref{rem:ExplicitLinearization}.
    The second claim follows since \( r_\ast f^!(\unit) \simeq x^\ast f^!(\unit) \) again by \cref{lem:VBinvariance}.
\end{proof}

Thus far, by choosing a section \(x\colon S \to L\), we have observed that our Bauer--Furuta map can be seen as a map of the form
\[\BF_f\colon \S^{\ind(d_xf)} \to \S\]
in \(\LocSys(S;\Sp)\) (for a nice base \(S\)).
Later, we will provide a way to compare our construction to the classical Bauer--Furuta construction given by the finite-dimensional approximations as in \eqref{eq:BFintro}. This will be discussed in \cref{thm:approx}.
It is important to note that the finite-dimensionally approximated map \eqref{eq:fin-dimApprox} itself may not be a proper map. So we would like to provide a variant of our Bauer--Furuta map which does not assume the map to be proper.

\subsection{Some purity results for BF}
Here we are going to explore how a smaller subspace of \(L\) will suffice to construct the Bauer--Furuta map of \cref{const:BF}.

\begin{notation}
Let a map \(f\) in the following be locally proper 
and fix a section \(s\colon S \to Y\) of \(q\). Consider the following pullback of topological spaces over \(S\).
\begin{equation}\label{eq:moduli}
    \begin{tikzcd}[column sep=large]
        M \ar[rd, phantom, "\lrcorner", very near start] \ar[r, "\pi"] \ar[d, "i"'] & S \ar[d, "s"'] \ar[dd, bend left, equal] \\
        L \ar[r, "f"] \ar[dr, "r"'] & Y \ar[d, "q"'] \\
        & S
    \end{tikzcd}
\end{equation}
\end{notation}

The topological space \(M\) can be thought of as the moduli space, i.e., an affine cover of the moduli stack%
\footnote{The term \emph{moduli stack} refers to some stacky quotient by the gauge group action.},
of solutions to the equation \(f=s\).

\begin{remark}\label{rem:sectionclosed}
    Since all spaces are assumed to be Hausdorff (\cref{conv:allHaus}), the sections \(s\) and \(i\) are closed immersions.
\end{remark}

In the case when the (affine cover of the) moduli (stack) is proper, we have a variant of \cref{const:BF}.

\begin{construction}
    Assume that the map \(\pi\) in \eqref{eq:moduli} is proper.
    Construct another map as follows.
    \begin{equation}\label{eq:BFmoduliInt}
    \begin{tikzcd}[column sep=large]
        r_\ast f^!(\unit) \ar[r, "r_\ast f^!(\mathrm{unit})"]
        & r_\ast f^! s_\ast (\unit) \ar[r, dash, "\sim", "\mathrm{}"']
        & r_\ast i_\ast \pi^! (\unit) = \pi_\ast \pi^! (\unit) \ar[r, "\mathrm{counit}"] 
        & \unit \ar[r, "\mathrm{unit}"] & q_\ast(\unit)
    \end{tikzcd}
    \end{equation}
    Here the second map is the isomorphism \( f^!s_\ast \simeq i_\ast \pi^! \) obtained by the basechange isomorphism \( s^\ast f_!\simeq \pi_! i^\ast \).
\end{construction}

If \( q \) is contractible, then the unit map \( \unit \to q_\ast \unit \) is inverse to the map \( q_\ast(\mathrm{unit}) \colon \unit \to q_\ast \unit \) (by \cref{cor:contractible}).
Thus, if moreover \( f \) is proper, then the map \( \BF_f \) and the map \eqref{eq:BFmoduliInt} coincide by \cref{rem:BFisPBstable}.

\begin{corollary}\label{prop:modInt}
    If in the diagram \eqref{eq:moduli} \( f \) is proper and \( q \) is contractible, then the map \( \BF_f \) in \cref{const:BF} and the map \eqref{eq:BFmoduliInt} coincide.
\end{corollary}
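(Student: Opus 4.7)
The plan is to specialize \cref{rem:BFisPBstable} to the basechange of $f$ along the section $s\colon S \to Y$ appearing in \eqref{eq:moduli}. Setting $g = s$, $g' = i$, $f' = \pi$ and $q' = \id_S$ in the remark produces a commutative square in $\Sh(Y)$ whose top arrow is the counit $f_\ast f^!(\unit) \to \unit$, whose right arrow is the adjunction unit $\unit \to s_\ast s^\ast(\unit) = s_\ast(\unit)$, and whose bottom arrow is the composite $f_\ast f^! s_\ast(\unit) \xleftarrow{\sim} s_\ast \pi_\ast \pi^!(\unit) \xrightarrow{s_\ast(\mathrm{counit})} s_\ast(\unit)$ obtained from the inverse proper basechange followed by the counit. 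This square is essentially the only input the proof needs.

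Next I would push the square forward along $q_\ast$. Because $qs = \id_S$ and hence $q_\ast s_\ast = \id$, the top row becomes $\BF_f \colon r_\ast f^!(\unit) \to q_\ast(\unit)$, the right edge becomes the map $q_\ast(\mathrm{unit})\colon q_\ast(\unit) \to \unit$ highlighted in the paragraph preceding the statement, and the bottom edge becomes exactly the composite $r_\ast f^!(\unit) \to r_\ast f^! s_\ast(\unit) \simeq \pi_\ast \pi^!(\unit) \to \unit$ that forms the initial segment of \eqref{eq:BFmoduliInt}.

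Finally, contractibility of $q$ together with \cref{cor:contractible} implies that $q_\ast(\mathrm{unit})$ is an equivalence with inverse the adjunction unit $\eta \colon \unit \to q_\ast(\unit)$. Postcomposing the pushed-forward square on the right with $\eta$ therefore identifies $\BF_f$ with the composite $r_\ast f^!(\unit) \to \pi_\ast \pi^!(\unit) \to \unit \xrightarrow{\eta} q_\ast(\unit)$, which is by construction the map \eqref{eq:BFmoduliInt}. The only delicate point, and the only step where one has to be careful, is matching the bottom arrow of the square produced by \cref{rem:BFisPBstable} with the second map of \eqref{eq:BFmoduliInt}, since the former is presented via the inverse basechange $f^! s_\ast \simeq i_\ast \pi^!$ taken from the right adjoint of $s^\ast f_! \simeq \pi_! i^\ast$; this is however exactly the form displayed in the remark, so no real obstacle arises.
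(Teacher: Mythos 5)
Your argument is correct and is essentially the paper's own proof: the paper likewise derives the statement by specializing \cref{rem:BFisPBstable} to the basechange along the section $s$, applying $q_\ast$, and using \cref{cor:contractible} to identify $q_\ast(\mathrm{unit})\colon q_\ast\unit\to\unit$ as the inverse of the adjunction unit $\unit\to q_\ast\unit$. Your careful matching of the bottom arrow of the square with the basechange isomorphism $f^!s_\ast\simeq i_\ast\pi^!$ in \eqref{eq:BFmoduliInt} is exactly the identification the paper implicitly relies on.
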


\begin{remark}
    Assuming that \( \pi \) is proper, we have yet another variant 
    \[
        \begin{tikzcd}[column sep=large]
            r_\ast f^!(\unit) \ar[r, "r_\ast(\mathrm{unit})"] & r_\ast i_\ast i^\ast f^!(\unit) = \pi_\ast i^\ast f^!(\unit) \ar[r, "\sim", "\mathrm{}"', leftarrow] & s^\ast f_! f^! (\unit) \ar[r, "s^\ast(\mathrm{counit})"] & s^\ast(\unit) = \unit \ar[r, "\mathrm{unit}"] & q_\ast(\unit)
        \end{tikzcd}
    \]
    where the second map is the basechange isomorphism \( \pi_\ast i^\ast \simeq s^\ast f_! \).
    By some simple diagram chasing, one can show that the map coincides with \eqref{eq:BFmoduliInt}, also with \cref{const:BF} under the same assumption as above.
\end{remark}

The map \eqref{eq:BFmoduliInt} 
can be interpreted as the restriction map onto the Borel--Moore \(\S\)-homology of the moduli \(M\), followed by the proper pushforward, i.e., the integration map along \(\pi\).
This is a geometric interpretation of our Bauer--Furuta map.

In the Seiberg--Witten map, the choice of a section \(s\) can be thought of a perturbation of the moduli space \(M\); generically \(s\) can be transverse to the map \(f\).
The propositions above thus imply the following.

\begin{corollary}
    [Independence of the perturbation]
    Assume that \(q\) is contractible.
    If \(f\) is proper, then the map \eqref{eq:BFmoduliInt} is independent of the choice of a section \(s\).
\end{corollary}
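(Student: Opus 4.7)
The plan is essentially a direct application of \cref{prop:modInt}. The key observation is that the Bauer--Furuta map \( \BF_f \) of \cref{const:BF} is constructed solely from the datum of the proper map \( f \colon L \to Y \) together with the structure map \( q \colon Y \to S \); no section \( s \) of \( q \) enters its definition. Hence \( \BF_f \) is manifestly independent of any choice of section, and it suffices to identify the map \eqref{eq:BFmoduliInt} with \( \BF_f \).

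First, for any section \( s \colon S \to Y \) of \( q \), I would check that \eqref{eq:BFmoduliInt} is well-defined under our assumptions, i.e.\ that the pullback \( \pi \colon M \to S \) is proper. This is immediate: by \cref{rem:sectionclosed}, \( s \) is a closed immersion, so its basechange \( i \colon M \to L \) along \( f \) is again a closed immersion; thus \( \pi = f \circ i \) is a composite of a closed immersion with the proper map \( f \), and is itself proper. This ensures the functor \( \pi^! \) (and hence the counit \( \pi_\ast \pi^!(\unit) \to \unit \)) used in \eqref{eq:BFmoduliInt} is available.

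Next, \cref{prop:modInt} produces, for each choice of section \( s \), a canonical equivalence in the mapping space \( \Map_{\Sh(S)}(r_\ast f^!(\unit),\, q_\ast(\unit)) \) between the map \eqref{eq:BFmoduliInt} (associated to \( s \)) and the \( s \)-independent map \( \BF_f \). Given two sections \( s_1, s_2 \) of \( q \), chaining these two equivalences through \( \BF_f \) produces a canonical equivalence between the versions of \eqref{eq:BFmoduliInt} built from \( s_1 \) and from \( s_2 \), establishing the claimed independence.

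There is no real obstacle here: all of the work has been done in the preceding \cref{prop:modInt}, which in turn relied on \cref{rem:BFisPBstable} and \cref{cor:contractible} (used to invert the unit map \( \unit \to q_\ast(\unit) \) under the contractibility assumption). The only subtle point worth emphasizing is that ``independence'' is interpreted homotopically as the existence of a canonical equivalence in a mapping spectrum, rather than a strict equality, and this is exactly what the identification with \( \BF_f \) supplies.
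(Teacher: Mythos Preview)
Your approach is correct and matches the paper's intended argument: the corollary is stated immediately after \cref{prop:modInt} with the remark that ``the propositions above thus imply the following,'' so the intended proof is exactly to identify \eqref{eq:BFmoduliInt} with the section-independent map \(\BF_f\) via \cref{prop:modInt}.

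One small slip to fix: you write ``\(\pi = f \circ i\)'', but \(f \circ i\) lands in \(Y\), not in \(S\). The map \(\pi \colon M \to S\) is the basechange of \(f \colon L \to Y\) along \(s \colon S \to Y\), and properness is stable under pullback; that is the one-line justification that \(\pi\) is proper. Your alternative route via \(i\) being a closed immersion is fine too, but then the composite you want is \(r \circ i\) (with \(r = q \circ f\)), not \(f \circ i\).
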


We may, therefore, abuse notation and write \(\BF_{f}\) for the map \(r_\ast f^!(\unit) \to \unit\) as in \eqref{eq:BFmoduliInt} whenever we are given a section \(s\) with \(f\) being only locally proper and \(\pi\) being proper. As we have just observed, it does not depend on the choice of a section when \(f\) is proper.

Another obvious corollary is that in the construction \eqref{eq:BFmoduliInt}, only the values of \(f\) near the moduli \(M\) is crucial.
This is why we consider these comparison results as purity phenomena.
We record this observation as follows.

\begin{corollary}
    [A purity for the Bauer--Furuta map] \label{cor:BFpurity}
    Assume, in the diagram \eqref{eq:moduli}, that \(\pi\) is proper.
    Take an open subspace \(j \colon N \subset L\) and let \(f_N\) denote the restriction \(N \subset L \xrightarrow{f} Y\).
    Suppose that the map \(N\cap M \hookrightarrow M \xrightarrow{\pi} S\) is proper. Then we have a (commutative) diagram of the following form.
    \begin{equation*}
        \begin{tikzcd}[column sep=small, row sep=small]
            r_\ast f^!(\unit) \ar[rrr, "\BF_f"] \ar[d, end anchor=north] &&& \unit \\
            r_\ast j_\ast j^\ast f^!(\unit) \ar[r, phantom, "="] & r_\ast j_\ast f_N^!(\unit) \ar[rru, "\BF_{f_N}"', start anchor=east] 
        \end{tikzcd}
    \end{equation*}
    If, moreover, \(f^!(\unit)\) is constant along \(r\) and \(j\) is a homotopy equivalence over the base, then the left vertical map is an isomorphism and thus we conclude that \(\BF_f \simeq \BF_{f_N}\).
\end{corollary}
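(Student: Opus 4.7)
The plan is to (i) unpack the identifications and the vertical map, (ii) verify commutativity by tracing both composites through the moduli interpretation \eqref{eq:BFmoduliInt}, and (iii) deduce the \emph{moreover} claim from the constancy hypothesis. For (i), the equality \(r_\ast j_\ast j^\ast f^!(\unit) = r_\ast j_\ast f_N^!(\unit)\) holds because \(j\) is an open immersion, so \(j^\ast \simeq j^!\) and hence \(j^\ast f^! \simeq (fj)^! = f_N^!\); the unlabeled vertical arrow is then \(r_\ast\) applied to the unit \(\id \to j_\ast j^\ast\) at \(f^!(\unit)\).

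For (ii), I would form the Cartesian square
\[
\begin{tikzcd}
M_N \ar[r, "k"] \ar[d, "i_N"'] \ar[dr, phantom, very near start, "\lrcorner"] & M \ar[d, "i"] \\
N \ar[r, "j"'] & L
\end{tikzcd}
\]
with \(M_N = N \cap M\), \(k\) an open immersion, \(i_N\) a closed immersion (by \cref{rem:sectionclosed}), and \(\pi_N \coloneqq \pi k\) proper by hypothesis. The basechange isomorphisms \(f^! s_\ast \simeq i_\ast \pi^!\) and \(f_N^! s_\ast \simeq i_{N\ast}\pi_N^!\), together with \(r_\ast i_\ast = \pi_\ast\) and \(r_\ast j_\ast i_{N\ast} = \pi_{N\ast}\), identify \(\BF_f\) and \(\BF_{f_N}\) with composites ending at the counit maps \(\pi_\ast \pi^!(\unit) \to \unit\) and \(\pi_{N\ast}\pi_N^!(\unit) \to \unit\), respectively. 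By naturality of basechange with respect to the unit of \(j^\ast \dashv j_\ast\), the vertical map lifts to the morphism \(\pi_\ast\pi^!(\unit) \to \pi_\ast k_\ast k^\ast \pi^!(\unit) = \pi_{N\ast}\pi_N^!(\unit)\) induced by \(\id \to k_\ast k^\ast\). Commutativity of the triangle then amounts to the compatibility of the counits of \(\pi\) and \(\pi_N = \pi k\) through this comparison, which follows from the functoriality of counits along composed adjunctions, given here that both \(\pi\) and \(\pi_N\) are proper.

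For (iii), suppose \(f^!(\unit) \simeq r^\ast \mathcal{G}\) with \(\mathcal{G} \simeq x^\ast f^!(\unit)\) for any section \(x\). Then \(j^\ast f^!(\unit) \simeq r_N^\ast \mathcal{G}\) where \(r_N \coloneqq rj\), and since \(j\) is a homotopy equivalence over \(S\), both \(r\) and \(r_N\) satisfy the hypothesis of \cref{lem:VBinvariance}, giving \(r_\ast f^!(\unit) \simeq \mathcal{G} \simeq r_{N\ast}j^\ast f^!(\unit)\). Hence the vertical map is an isomorphism, and combined with (ii) we obtain \(\BF_f \simeq \BF_{f_N}\). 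The main obstacle is the coherence verification in step (ii)—ensuring the basechange isomorphisms interact properly with units and counits to produce a strictly commutative triangle—which is a formal but delicate consistency check in the six-functor calculus.
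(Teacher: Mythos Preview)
The paper gives no proof of this corollary, regarding it as immediate from the construction \eqref{eq:BFmoduliInt}. Your steps (i) and (ii) correctly spell out what the paper leaves implicit: the identification \(j^\ast f^! = f_N^!\) via \(j^!=j^\ast\), the vertical map as the unit of \(j^\ast\dashv j_\ast\), and the compatibility of the moduli-integral constructions for \(f\) and \(f_N\) through the pullback square on moduli. The coherence check you flag in (ii) is indeed formal six-functor bookkeeping.

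Your step (iii), however, has a genuine gap: \cref{lem:VBinvariance} does not apply here, since its hypothesis (inherited from \cref{lem:globallyconstant}) is that \(r\) and \(r_N\) are Banach vector bundles or convex subsets thereof, an assumption absent from this corollary. Knowing that \(j\) is a homotopy equivalence over \(S\) implies nothing about \(r\) or \(r_N\) individually---take \(L=N\), \(j=\id\), with \(r\) any non-contractible map. The fix is to argue directly from homotopy invariance: with \(f^!(\unit)\simeq r^\ast\mathcal{G}\), the vertical map identifies with the restriction map \(r_\ast r^\ast\mathcal{G}\to r_{N\ast}r_N^\ast\mathcal{G}\) on relative cohomology. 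A homotopy inverse \(g\colon L\to N\) over \(S\) induces a map in the other direction, and the two composites are the maps on cohomology induced by \(jg\) and \(gj\); by the corollary following \cref{thm:A1-inv} these agree with the maps induced by \(\id_L\) and \(\id_N\), hence are identities. Note also that your invocation of a section \(x\colon S\to L\) is superfluous and need not exist in this generality.
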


\subsection{Comparison to finite-dimensional approximations}

We end this section by proving a promised comparison result to the classical Bauer--Furuta construction given by the finite-dimensional approximations.

Let us focus on the Seiberg--Witten map, so that we assume that \(f\) is as in \cref{eg:familySWmap}. We further assume for simplicity that our base \(S\) is a point. So \(L\) and \(Y\) are simply Banach vector spaces.

For any topological space \(B\), we let \(p_B\) denote the unique map \(B \to \ast\) onto a point.
For any map \(h\colon B \to S\) and any subspace \(C\subset B\), we write \(h_C\) for the restriction of \(h\) to \(C\).

The proof given below is very similar to that of~\cite[Lemma 2.3 (3)]{BF1}.

\begin{theorem}
    [Finite-dimensional approximation] \label{thm:approx}
    Let \(f\colon L \to Y\) be a proper and \(C^1\)-differentiable map between Banach vector spaces, which we assume to be of the form \(f=l+c\) where \(l\) is a linear Fredholm map and the differentials of \(c\) are compact operators.
        
    Assume that there exists a finite-dimensional linear subspace \(\iota \colon W\subset Y\) that spans \(Y\) together with \(\im(l)\) and admits a splitting \(\pr_W \colon Y \to W\). Let \(W'\) denote \(l^{-1}(W)\), which is also finite-dimensional. \begin{equation*}
        \begin{tikzcd}
            W' \ar[r, "l'"] \ar[d, "\iota'"'] \ar[rd, phantom, very near start, "\lrcorner"] & W \ar[d, "\iota"'] \\
            L \ar[r, "l"] & Y
        \end{tikzcd}
    \end{equation*}
    Consider the composite \(
    g\colon W' \hookrightarrow L \xrightarrow{f} Y \xrightarrow{\pr_W} W %
    \). 
    Since \(M\coloneqq f^{-1}(0)\) is compact, it is bounded in \(L\). Let us assume further that there exists a bounded open subspace \(j\colon N \hookrightarrow L\) containing \(f^{-1}(0)\) such that \(N\cap f^{-1} \pr_W^{-1}(0)\) is compact.\footnote{Note that all the assumptions are satisfied by the Seiberg--Witten map by~\cite[2.2, 2.3, 3.1]{BF1}.}\begin{equation*}
        \begin{tikzcd}
            \mathllap{g\colon} W' \ar[r, "\iota'"] &L \ar[r, "f"] & Y \ar[r, "\pr_W"] &W \\
            N' \ar[u, "j'"] \ar[r, "\iota''"'] \ar[ru, phantom, very near start, "\urcorner"] & N \ar[u, "j", hookrightarrow]
        \end{tikzcd}
    \end{equation*}
    In particular, the map \(g_{N'}\) has compact zero-set and the construction \eqref{eq:BFmoduliInt} makes sense.

    Then the map \(\BF_f \colon \S^{\ind(l)} \simeq p_L{}_\ast f^!(\unit) \to \S\) can be identified with the following composite map, which can be thought of as the classical Bauer--Furuta map obtained by the finite-dimensional approximation argument:
    \[
    \BF_f^{\mathrm{apprx}} \colon
    p_{W'\ast} g^!(\unit) \to p_{N'\ast}{} j'^\ast g^!(\unit) = p_{N'\ast}{} g^!_{N'}(\unit) \xrightarrow{\BF_{g_{N'}}} \S
    \]
\end{theorem}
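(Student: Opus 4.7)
The plan is to recast both $\BF_f$ and $\BF_f^{\mathrm{apprx}}$ in moduli-integral form \eqref{eq:BFmoduliInt} and match them through purity, a linear-to-nonlinear deformation, and proper base change on a pullback square. Since $Y$ is contractible and $f$ is proper, \cref{prop:modInt} presents $\BF_f$ as the moduli-integral map for the zero section $0\colon\ast\to Y$ and moduli $M = f^{-1}(0)$. Applying \cref{cor:BFpurity} to the open inclusion $j\colon N\hookrightarrow L$ (with $N\cap M = M$ compact) gives a factorization $\BF_f = (\text{restriction})\circ \BF_{f_N}$, where $\BF_{f_N}$ is the moduli integral over $M$ from the total space $N$.

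Next, I would deform $f_N$ to the auxiliary map $\phi \coloneqq l + \iota\pr_W c$ via the family
\[
F\colon [0,1]\times L \to [0,1]\times Y,\qquad F(t,v) = \bigl(t,\ l(v) + (1-t)\iota\pr_W c(v) + tc(v)\bigr),
\]
with $F_0 = \phi$, $F_1 = f$. Each fiber is $C^1$ with Fredholm differentials of index $\ind(l)$ (as $dc$ and $\iota\pr_W dc$ are compact). The key estimate is $\pr_W\circ F_t = \pr_W\circ f$ for all $t$ (since $\pr_W\iota = \id_W$), forcing
\[
F^{-1}(0)\cap ([0,1]\times N)\ \subset\ [0,1]\times \bigl(N\cap f^{-1}\pr_W^{-1}(0)\bigr),
\]
compact by hypothesis. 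Arguing as in \cref{rem:ExplicitLinearization}, $F^!(\unit)$ is locally constant and in particular constant along the contractible $[0,1]$-direction, producing a canonical identification $\BF_{f_N}\simeq \BF_{\phi_N}$ in moduli-integral form with common moduli $M_{N'} = \phi_N^{-1}(0)\cap N = N\cap W'\cap f^{-1}\pr_W^{-1}(0)$.

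Now I perform proper base change on a pullback square. The identity $\phi^{-1}(W) = l^{-1}(W) = W'$ (equivalently $\phi_N^{-1}(W) = N'$) realizes $g_{N'}$, resp.~$g$, as the pullback of $\phi_N$, resp.~$\phi$, along $\iota\colon W\hookrightarrow Y$:
\[
  \begin{tikzcd}
    N' \ar[r, "\iota''"] \ar[d, "g_{N'}"'] \ar[dr, phantom, very near start, "\lrcorner"] & N \ar[d, "\phi_N"] & W' \ar[r, "\iota'"] \ar[d, "g"'] \ar[dr, phantom, very near start, "\lrcorner"] & L \ar[d, "\phi"] \\
    W \ar[r, "\iota"'] & Y & W \ar[r, "\iota"'] & Y
  \end{tikzcd}
\]
Proper base change yields $\phi_N^!\iota_* \simeq \iota''_*g_{N'}^!$ and $\phi^!\iota_* \simeq \iota'_*g^!$. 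Applying the first to $(0_W)_*(\unit)$ (and using $(0_Y)_* = \iota_*(0_W)_*$) identifies the middle terms of $\BF_{\phi_N}$ and $\BF_{g_{N'}}$; the counit-then-unit maps to $\S$ also agree since $W$ and $Y$ are both contractible. Applying the second to $\unit$ and pushing forward to a point gives $p_{L*}\phi^!(\iota_*\unit) \simeq p_{W'*}g^!(\unit)$, which restricts on $N$ to $p_{N*}\phi_N^!(\iota_*\unit)\simeq p_{N'*}g_{N'}^!(\unit)$. Combining these identifications assembles $\BF_{\phi_N}$ as the composite $p_{N*}\phi_N^!(\unit) \to p_{N'*}g_{N'}^!(\unit) \xrightarrow{\BF_{g_{N'}}} \S$, which is the definition of $\BF_f^{\mathrm{apprx}}$ once the two sources $p_{N*}\phi_N^!(\unit) \simeq p_{W'*}g^!(\unit)\simeq \S^{\ind(l)}$ are identified via the Thom isomorphism for the families Fredholm index.

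The main obstacle will be this final source matching. The two Thom isomorphisms arise by applying the linearization hypothesis to $\phi_N$ and to $g$, computing the indices at the origin as $\ind(d\phi|_0) = \ind(l+\iota\pr_W dc|_0)=\ind(l)$ and $\ind(dg|_0) = \ind(\pr_W d\phi|_0\iota') = \ind(l)$ respectively. Compatibility of the two source identifications with the pullback base change reduces, via the linearization deformation of \cref{rem:ExplicitLinearization} applied uniformly across the pullback square, to the tautological linear pullback identity $d\phi|_0^{-1}(W) = l^{-1}(W) = W'$.
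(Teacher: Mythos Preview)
Your approach is correct but follows a genuinely different route from the paper's. Both arguments open with purity (\cref{cor:BFpurity}) and the same one-parameter deformation controlled by the identity $\pr_W\circ F_t = \pr_W\circ f$, arriving at the intermediate map $l+\pr_W\circ c$; but then they diverge. The paper runs a \emph{second} homotopy to reach $g' \coloneqq l + \pr_W \circ c \circ \pr_{W'}$ (which requires first choosing a splitting $L \cong \mathcal{H}\times W'$); this $g'$ is the pullback of $g$ along the contractible projection $\pr_W\colon Y \to W$, so \cref{rem:BFisPBstable} identifies $\BF_{g'_{N''}} \simeq \BF_{g_{N'}}$ with the sources matched automatically by contractibility. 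You instead stop at $\phi = l + \iota\pr_W c$ and base-change along the closed embedding $\iota\colon W \hookrightarrow Y$. This is more canonical (no splitting of $L$ is chosen) and saves a homotopy, but the induced comparison $p_{N*}\phi_N^!(\unit_Y) \to p_{N'*}g_{N'}^!(\unit_W)$ is the pushforward of the unit $\phi_N^!(\unit_Y)\to \iota''_*\iota''^*\phi_N^!(\unit_Y)$, and matching it against the restriction $p_{W'*}g^!(\unit)\to p_{N'*}g_{N'}^!(\unit)$ genuinely requires the linearization-in-pullback-squares compatibility you sketch in your final paragraph. In short, the paper's extra homotopy trades a noncanonical splitting for an automatic source identification via contractibility, while your route is splitting-free but must invoke the linearization hypothesis once more at the base-change step; both ultimately rest on the same compatibility of \cref{rem:ExplicitLinearization} with the zigzag.
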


\begin{proof}
    Let \(\mathcal{H} = \ker(\pr_W)\). We first choose an isomorphism \(L \cong \mathcal{H} \times W'\) such that the map \(\mathcal{H} \times W' \cong L \xrightarrow{l} Y = \mathcal{H} \times W\) is of the form \(\begin{pmatrix}
        1 & 0 \\ 0 & \ast
    \end{pmatrix}\), which exists since \(l\) is Fredholm. Define a map \(g'\) as follows.
    \[
    \begin{tikzcd}[row sep=small]
        \mathllap{\pr_{W'}^{-1}(N')=:\,}N'' \ar[d, dash, "\cong"] &\mathllap{g'\colon} L \ar[r] \ar[d, dash, "\cong"] & Y \ar[d, equal] \\
        \mathcal{H} \times N' \ar[r, hookrightarrow] \ar[d] \ar[rd, phantom, very near start, "\lrcorner"] &\mathcal{H}\times W' \ar[r] \ar[d, "\pr"'] \ar[rd, phantom, very near start, "\lrcorner"] & \mathcal{H} \times W \ar[d, "\pr_W"] \\
        N' \ar[r, phantom, "\subset"] &W' \ar[r, "g"] & W
    \end{tikzcd}
    \] In other words, \(g'=l+\pr_W\circ c \circ \pr_{W'}\). The map \(g'\) also has differentials of the form \(l+\textit{compact}\) as well as \(f\), and \(g'_{N''}\) has compact zero-set as well as \(g_{N'}\). By the pullback-stability (\cref{rem:BFisPBstable}) we conclude that \(\BF_{g'_{N''}} \simeq \BF_{g_{N'}}\).

    Construct the map over the base space \([0,1]\) \[
    \begin{tikzcd}[column sep=small, row sep=tiny] \psi \colon L \times [0,1] \ar[rr] \ar[rd] && Y \times [0,1] \ar[ld] \\
    & {[0,1]} &
    \end{tikzcd}
    \] by the formula \(\psi(-,t) = \left(\left(1-t\right)f + t(l+\pr_W\circ c), t\right)\). The vertical differentials of \(\psi\) are of the form \(l+\textit{compact}\) hence Fredholm, and the zeros of \(\psi_{N\times[0,1]}\) are contained in \(N\cap f^{-1}\pr_W^{-1}(0)\) hence compact.
    Similarly, we define another \(1\)-parameter family \(\phi\) by the formula \(\phi(-,t) = (l + \pr_W\circ c \circ((1-t)\id + t \pr_{W'}), t)\). It has Fredholm differentials, and the zeros of \(\phi_{N\times [0,1]}\) are contained in the compact \(N\cap f^{-1}\pr_W^{-1}(0)\).
    We now execute the construction \eqref{eq:BFmoduliInt} to the maps \(\psi\) and \(\phi\) and then invoke locally constancy (\cref{cor:BFlcnst}) and homotopy invariance (\cref{lem:globallyconstant}) to conclude that \(\BF_{f_N} \simeq \BF_{g'_N}\).

    Finally, we have the purity triangles (\cref{cor:BFpurity}) of the form \(\BF_{f} \to \BF_{f_N}\) and \(\BF_{g'_N} \to \BF_{g'_{N\cap N''}} \leftarrow \BF_{g'_{N''}}\), which we consider as the maps in the slice category \(\Sp_{/\S}\).

    Combining all together, we get a zigzag connecting the desired maps. \[
    \begin{tikzcd}[row sep=small, column sep=large]
        \mathllap{\S^{\ind(l)}\simeq\;} \bullet \ar[d] \ar[r, "\BF_f"] & \S \\
        \mbox{} & \\
        \mathllap{\S^{\ind(l)}\simeq\;} \bullet \ar[u] \ar[ruu, "\BF^{\mathrm{apprx}}"']
    \end{tikzcd}
    \] We also observe that identifications of the left hand sides with \(\S^{\ind(l)}\) can be given as in \cref{rem:ExplicitLinearization}, which commute with the vertical zigzag that we have constructed so far. Therefore, we have completed constructing a homotopy \(\BF_f \simeq \BF_f^{\mathrm{apprx}}\).
\end{proof}

We do not think that this theorem will serve a substantial role in our formalism, as we believe that all the basic behaviors satisfied by the Bauer--Furuta invariants can be proved in a simpler way within our definition. Nonetheless, the theorem serves at least as a justification that our construction subsumes the constructions of~\cite{Fur01} and~\cite{BF1}.

\section{Genuine equivariance} \label{section:SHG}

In this section, we are going to provide a portion (\cref{dfn:SH}) of genuine equivariant six-functor formalism on topological spaces with Lie group actions, which is sufficient to define the genuine equivariant Bauer--Furuta map (\cref{const:genuineBF}).

\subsection{A quick review of genuine equivariant homotopy theory}
Let \(G\) be a Lie group.
The category of \emph{genuine (proper equivariant) \(G\)-spectra} \(\Sp^G\) is defined to be the presheaves on the (homotopy-coherent nerve of the) topological category of \(G\)-orbits \(G/K\) with compact stabilizer subgroups, while the representation spheres \(S^V\) are formally inverted.
It fits into a larger category of \emph{global spectra} \(\Glo\Sp\), where an orbit is replaced by a topological stack \(BK\) for all compact Lie groups \(K\).

By the work of~\cite{globalspaces}, these constructions are significantly simplified and it is shown that \[
\Glo\an \simeq \Sh_{\AA^1}(\mathsf{SepSt}).
\]
Here, \(\mathsf{SepSt}\) is the full subcategory of \(\Sh(\Mfd;\an_{\le1})\) spanned by those stacks satisfying appropriate representability and separated conditions and the subscript \({\AA^1}\) indicates the homotopy invariant sheaves.

For a \(G\)-manifold \(M\) with proper action, let \(M{\sslash} G \in \mathsf{SepSt}\) denote the quotient stack. Motivated by the above reformulation of global homotopy theory, and also inspired by the definition of equivariant motivic stable homotopy theory~\cite{Hoyois}, it is reasonable to consider the following category as a genuine equivariant spectral sheaf theory. \[
    \SH(M{\sslash}G) \coloneqq \Sh_{\AA^1}(\Sm^{\mathrm{sep}}_{M{\sslash}G})[\S^{-V}]
\]
Here, the category \(\Sm^{\mathrm{sep}}_{M{\sslash}G}\) consists of separated stacks over \(M{\sslash}G\) whose structure morphisms onto \(M{\sslash}G\) are \emph{representable submersion{s}}.
These categories are studied extensively in~\cite[Part II]{BastiaanPhD}.

It is possible to unwind the definition and redefine this category, and adopt it even to non-differentiable base spaces.
In the following, we take an abstract approach to these categories. Specifically, we will provide a simple axiom that such a topological variant of the site \(\Sm^{\mathrm{sep}}_{X{\sslash}G}\) should satisfy, and show that these axioms are sufficient to prove the proper basechange theorem.
A concrete (six-functor) formalism that satisfies our requirement will be studied in a future work, as well as the detailed behavior of the genuine equivariant Bauer--Furuta map.

\subsection{Genuine equivariant homotopy theory}

Fix a Lie group \( G \). Recall that a \( G \)-action on a topological space \( X \) is proper if and only if the stacky quotient \( X{\sslash}G \) is separated.

\begin{definition}
We fix a collection of \(G\)-equivariant continuous maps between topological spaces, which is referred to as the collection of \emph{\(G\)-smooth maps}, that satisfies the following axiom.
\begin{enumerate}
    \item Every \(G\)-smooth map is an open map.
    \item If a \(G\)-map \(p\colon Y \to X\) admits an open cover \(\left\{U_i\to Y\right\}\) consisting of \(G\)-invariant open subspaces such that each map \(U_i \to p(U_i)\) is \(G\)-smooth, then \(p\) is \(G\)-smooth.
    \item The collection of \(G\)-smooth maps is closed under basechange and composition, and it contains all \(G\)-equivariant homeomorphisms.
    \item Every (finite-rank) \(G\)-equivariant vector bundle is \(G\)-smooth.
    \item If \( K \) is a compact subgroup of \( G \), \(H\) is a closed subgroup of \( G \) with \( K\subset H \), and \(S\) is a topological space with \(H\)-action, then the map \(\ind_K^G(S) \to \ind_H^G(S)\) is \(G\)-smooth.
    \item For each \(X\), the collection of \(G\)-smooth maps with target \(X\) forms a small category.
\end{enumerate}

Define the site \( \Sm_X^G \) as follows.
The objects are the \( G \)-smooth maps with target \( X \), and the morphisms are \( G \)-maps over \( X \). The category \( \Sm_X^G \) is equipped with the Grothendieck topology generated by open coverings consisting of \( G \)-invariant open subspaces.
\end{definition}

For example, the category \( \Sm^G \coloneqq \Sm^G_{\pt} \), where \( X \) is taken to be a point, contains every differentiable \( G \)-manifold with proper action, such as an orbit \( G/K \) with compact stabilizer  subgroup \( K \). This coincides with the convention from \emph{proper equivariant} homotopy theory.

An easy example of a class of \( G \)-smooth maps is the class of \( G \)-maps whose underlying continuous maps are topological submersions,
which could be one of the worst choice due to the absence of local models of \( G \)-smooth maps.
Thus, a preferred choice for the definition of \(G\)-smooth maps would be the smallest collection satisfying the axiom given above. However, the precise definition will not be needed in the following arguments.

\begin{remark}[Smooth/open sharp] \label{rem:Gsharp}
    Let \( p\colon Y \to X \) be \( G \)-smooth and let \(p_\circ\) denote the functor \(\Sm_Y^G \to \Sm_X^G\) obtained by post-composing with \(p\). Then we have an adjunction
    \begin{math}
        \begin{tikzcd}
            \Sm_Y^G \ar[r, "p_\circ", shift left] & \ar[l, "p^\ast", shift left] \Sm_X^G,
        \end{tikzcd}
    \end{math} each of which is a morphism of sites, meaning that the precomposition functors preserve sheaves. Thus, we obtain a triple as follows
    \[
        \begin{tikzcd}[column sep = large]
            \Sh(\Sm_Y^G) \ar[r, shift left = 5, "p_\sharp", ""'{name = LL}] \ar[r, shift right = 5, "p_\ast"', ""{name = R}] & \Sh(\Sm_X^G) \ar[l, "p^\ast"{description, name = L}]
            \ar[from = LL, to = L, phantom, "\dashv"{rotate = -90}]
            \ar[from = L, to = R, phantom, "\dashv"{rotate = -90}]
        \end{tikzcd}
    \] where \(p_\sharp\) is the sheafified left Kan extension functor along \(p_\circ\).
    Next, consider the site \( \Op_Y^G \) of \( G \)-invariant open subsets
    and the functor \(\jmath \colon \Op_Y^G \hookrightarrow \Sm_Y^G\), which is a morphism of sites having the covering lifting property\footnote{See~\cite[Appendix A.1]{Syn_E} for definition and consequence.}.
    By this, we obtain the following triple
    \[
        \begin{tikzcd}[column sep = large]
            \Sh(\Op_Y^G) \ar[r, shift left = 5, "\jmath_\sharp", ""'{name = LL}] \ar[r, shift right = 5, "\jmath_\ast"', ""{name = R}] & \Sh(\Sm_Y^G) \ar[l, "\jmath^\ast"{description, name = L}]
            \ar[from = LL, to = L, phantom, "\dashv"{rotate = -90}]
            \ar[from = L, to = R, phantom, "\dashv"{rotate = -90}]
        \end{tikzcd}
    \] where \(\jmath_\sharp\) is the sheafified left Kan extension along \(\jmath\) and \(\jmath^\ast\) is the restriction along \(\jmath\).

    We would like to abuse notation so that \(p_\sharp \colon \Sh(\Op_Y^G) \to \Sh(\Sm_X^G)\) also denotes the composite \(p_\sharp \jmath_\sharp\).
\end{remark}
\begin{lemma}
    [Smooth basechange] \label{lem:GSBC}
    Let \( f\colon X' \to X \) be a \( G \)-map and \( p\colon Y \to X \) be \( G \)-smooth.
    Let \(Y'\) denote the pullback \(Y \times_{X} X'\) and let \(f'\) and \(p'\) be the resulting basechanges.
    Then the following diagram commutes.
    \[
        \begin{tikzcd}
            \Sh(\Op_{Y'}^G) \ar[r, "p'_\sharp"] \ar[dr, phantom, "\Rightarrow"{rotate = -35}] & \Sh(\Sm_{X'}^G) \\
            \Sh(\Op_Y^G) \ar[r, "p_\sharp"'] \ar[u, "f'^\ast"]	& \Sh(\Sm_X^G) \ar[u, "f^\ast"']
        \end{tikzcd}
    \]
\end{lemma}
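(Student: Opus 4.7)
The plan is to verify the basechange isomorphism $p'_\sharp f'^\ast \simeq f^\ast p_\sharp$ by observing that both composite functors $\Sh(\Op_Y^G) \to \Sh(\Sm_{X'}^G)$ are colimit-preserving, and then comparing them on representable sheaves. Here, $f^\ast$ and $f'^\ast$ are the inverse images of the geometric morphisms of sites induced by pullback along $f$ and $f'$, and are therefore left adjoints; while $p_\sharp$ and $p'_\sharp$ are left adjoints by \cref{rem:Gsharp}. Hence both composites are cocontinuous, and since $\Sh(\Op_Y^G)$ is generated under colimits by the sheafified representables $y(U)$ for $U \in \Op_Y^G$, it suffices to exhibit a natural isomorphism between the two functors on these generators.

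I would then carry out the computation on a representable $y(U)$. Following \cref{rem:Gsharp}, the functor $\jmath_\sharp$ sends $y(U)$ to $y(U)$ now regarded in $\Sh(\Sm_Y^G)$, and $p_\sharp$ further sends this to the representable $y(U \xrightarrow{p|_U} X)$ in $\Sh(\Sm_X^G)$; the composite $U \hookrightarrow Y \xrightarrow{p} X$ is indeed $G$-smooth by closure under composition. Applying the inverse image $f^\ast$, which on representables is pullback along $f$, yields $y(U \times_X X' \to X')$. In the other direction, $f'^\ast y(U) \simeq y(U \times_Y Y')$ in $\Sh(\Op_{Y'}^G)$, and applying $p'_\sharp$ gives $y(U \times_Y Y' \to X')$ in $\Sh(\Sm_{X'}^G)$.

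These two representables are canonically identified via the evident isomorphism $U \times_Y Y' \simeq U \times_Y (Y \times_X X') \simeq U \times_X X'$ over $X'$, and this identification is natural in $U \in \Op_Y^G$ by functoriality of fiber products. Extending by cocontinuity produces the required natural isomorphism of functors, which is the 2-cell filling the square.

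The main obstacle, in my view, is entirely bookkeeping: one has to verify that the various $\sharp$- and inverse-image functors send sheafified representables to sheafified representables according to the expected formulas, i.e.\ that sheafification commutes with the relevant left Kan extensions. This in turn reduces to the fact that each of the site-level functors involved ($p_\circ$, $\jmath$, and pullback along $f$ or $f'$) sends open covers to open covers, which is immediate since the pullback of an open cover is an open cover, and the composition with $p$ of an open cover of $U$ is an open cover of $U$ viewed in $\Sm_X^G$. Once this compatibility is in place, the whole argument boils down to the elementary identity $U \times_Y Y' \simeq U \times_X X'$ in the category of $G$-spaces.
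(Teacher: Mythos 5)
Your proof is correct and takes essentially the same route as the paper: both reduce by cocontinuity to the generators of $\Sh(\Op_Y^G)$ coming from $G$-invariant opens $U \subset Y$, where the claim becomes the commutativity of the site-level square, i.e.\ the identity $U \times_Y Y' \simeq U \times_X X'$ over $X'$. The paper's proof is simply a terser version of your computation.
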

\begin{proof}
    Since every functor in the diagram commutes with colimits, it suffices to show that the map \[ p'_\sharp f'^\ast j_!(\unit) \to f^\ast p_\sharp j_!(\unit) \] is an isomorphism for any \( G \)-invariant open embedding \( j\colon U \to Y \).
    This follows from the following easy commutative diagram.
    \[
        \begin{tikzcd}
            \Op_{Y'}^G \ar[r, "p'(-)"] & \Sm_{X'}^G \\
            \Op_Y^G \ar[r, "p(-)"'] \ar[u, "f'^{-1}"]	& \Sm_X^G \ar[u, "f^{-1}"']
        \end{tikzcd}
    \]
\end{proof}

\begin{proposition}[Proper pushforward] \label{prop:Gproperpushforward}
    Let \( f\colon X' \to X \) be a proper \( G \)-map. We assume that \( G \)-action on \( X' \) is a proper action.
    Then the functor \( f_\ast \colon \Sh(\Sm_{X'}^G) \to \Sh(\Sm_X^G) \) preserves colimits.
    
    Moreover, the proper projection formula holds, i.e., the map \( f_\ast(-) \otimes (-) \to f_\ast(-\otimes f^\ast(-)) \) is an isomorphism.
\end{proposition}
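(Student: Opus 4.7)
My plan is to adapt the standard proof of proper pushforward from the non-equivariant six-functor formalism to the present equivariant setting, using the smooth basechange of Lemma \ref{lem:GSBC} as the key technical input.

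The strategy for colimit preservation is to reduce to a statement about compact objects via the adjunction $f^\ast \dashv f_\ast$. Concretely, since $\Sh(\Sm_X^G)$ is presentable stable and $f_\ast$ preserves finite colimits automatically (being right adjoint to $f^\ast$ in a stable context), colimit preservation of $f_\ast$ is equivalent to preservation of filtered colimits, which in turn is equivalent to $f^\ast$ sending compact objects to compact objects. By smooth basechange, $f^\ast$ sends the generating sheaves $p_\sharp\jmath_\sharp(\unit_U)$ to $p'_\sharp\jmath'_\sharp(\unit_{U'})$, where $p'\colon Y'\to X'$ and $U'\subset Y'$ are the basechanges along $f$. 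So it suffices to identify a cofinal collection of compact generators of $\Sh(\Sm_X^G)$ of the form $p_\sharp\jmath_\sharp(\unit_U)$ whose basechanges remain compact in $\Sh(\Sm_{X'}^G)$.

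The properness hypotheses enter here. For a pair $(p,U)$ where $\overline{U}$ maps properly through $p$ into a relatively compact open of $X$, the basechange $U' = U\times_X X'$ has compact closure in each fiber of $p'$, so the representable $p'_\sharp\jmath'_\sharp(\unit_{U'})$ is compact in $\Sh(\Sm_{X'}^G)$ by standard arguments from the non-equivariant topological setting (sections of sheaves of spectra over compact Hausdorff subspaces commute with filtered colimits). A cofinal family of such $(p,U)$ exists on $X$ because the proper $G$-slice theorem provides $G$-equivariant covers of both $X$ and $X'$ by relatively compact open slices around each point with compact stabilizer, and condition~(5) of the $G$-smoothness axioms ensures that the orbit-type data of these slices is accommodated.

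Once $f_\ast$ is known to preserve colimits, the projection formula is formal: the natural map $f_\ast F\otimes G\to f_\ast(F\otimes f^\ast G)$ is a map of functors colimit-preserving in $G$, so it suffices to verify the isomorphism for $G=p_\sharp\jmath_\sharp(\unit_U)$, where the smooth projection formula (for the $G$-smooth map $p$) combined with Lemma \ref{lem:GSBC} reduces both sides to the same computation on $Y\times_X X'$. The principal difficulty lies in the compactness argument of the previous paragraph: making the claim ``enough $p_\sharp\jmath_\sharp(\unit_U)$ are compact'' precise in the equivariant topological context and producing the cofinal generating family via proper $G$-slices. Once this is set up, the remainder of the proof is essentially formal.
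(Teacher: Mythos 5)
Your reduction of colimit-preservation to ``\( f^\ast \) preserves compact objects'' has a genuine gap: it presupposes that \( \Sh(\Sm_{X'}^G) \) (and \( \Sh(\Sm_X^G) \)) is compactly generated by objects of the form \( p_\sharp\jmath_\sharp(\unit_U) \), and that such an object is compact when \( U \) is relatively compact over the base. Neither holds in the generality of the proposition. Already non-equivariantly, \( \Sh(Y;\Sp) \) for a topological space \( Y \) is typically not compactly generated, and \( \Map(j_!(\unit_U),-)=\Gamma(U;-) \) does not commute with filtered colimits for a general open \( U \); the fact that sections over compact Hausdorff subsets commute with filtered colimits does not upgrade the \( j_!(\unit_U) \) to compact generators. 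Worse, the proposition makes no local compactness assumption on \( X \) or \( X' \) --- the whole point of the paper is to accommodate infinite-dimensional Banach manifolds, where no cover by relatively compact \( G \)-invariant opens (and no proper \( G \)-slice theorem in the form you invoke) is available. So the ``cofinal family of compact generators'' you need cannot be produced.

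The paper's proof goes the other way around and never touches compactness. The functors \( p^\ast\colon \Sh(\Sm_X^G)\to\Sh(\Op_Y^G) \), as \( p \) ranges over \( \Sm_X^G \), are jointly conservative and colimit-preserving, so it suffices to show each \( p^\ast f_\ast \) preserves colimits. Smooth basechange (\cref{lem:GSBC}) identifies \( p^\ast f_\ast \) with \( f'_\ast p'^\ast \), where \( f'\colon Y'\to Y \) is the basechange of \( f \) along \( p \); since \( p'^\ast \) preserves colimits, one is reduced to \( f'_\ast \) on sheaves over \emph{\( G \)-invariant opens}, which is literally \( (f'/G)_\ast \) on the quotient spaces. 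Properness of the \( G \)-action on \( X' \) makes \( f'/G \) separated universally closed, so the already-established non-equivariant result (\cref{thm:PBC}, itself proved via proper morphisms of \( \infty \)-topoi, not compact generation) applies. The projection formula then follows the same pattern using that \( p^\ast \) and \( p'^\ast \) are symmetric monoidal, much as in your last paragraph. If you want to salvage your write-up, replace the compactness argument by this conservativity-plus-basechange reduction; the rest of your outline (smooth basechange as the key input, reduction of the projection formula to the colimit statement) is in the right spirit.
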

\begin{proof}
    Since the functors \( \left\{p^\ast \colon \Sh(\Sm_X^G) \to \Sh(\Op_Y^G)\right\}_{p\in\Sm_X^G} \) are jointly conservative and each \( p^\ast \) preserves colimits, it suffices to show that each \( p^\ast f_\ast \) is colimit-preserving. Fix such a \(G\)-smooth map \(p\) and the following cartesian square.
    \begin{equation*}
    \begin{tikzcd}[row sep=small, column sep=small]
    Y' \ar[r, "p'"] \ar[d, "f'"'] \ar[rd, phantom, very near start, "\lrcorner"]	&	X' \ar[d, "f"]
    \\
    Y \ar[r, "p"']	& X
    \end{tikzcd}
    \end{equation*}

    By \cref{lem:GSBC}, \( p^\ast f_\ast \) is equivalent to the following composite functor. \[
        \begin{tikzcd}
            \Sh(\Sm_{X'}^G) \ar[r, "p'^\ast"] & \Sh(\Op_{Y'}^G) \ar[r, "f'_\ast"] & \Sh(\Op_Y^G)
        \end{tikzcd}
    \]
    Since \( p'^\ast \) preserves colimits, it suffices to show that \( f'_\ast \) preserves colimits.
    The functor \( f'_\ast \) is the same as the functor \( (f'{/}G)_\ast \colon \Sh(Y'/G) \to \Sh(Y/G) \) on the quotient topological spaces, and the map \( f'{/}G \) is separated universally closed by assumption. Therefore, it preserves colimits.

    The projection formula can be proved in the same way, by observing that the functors \( p^\ast \) and \( p'^\ast \) preserve tensor products.
\end{proof}

\begin{definition}
    Let \( X \) be a topological space with \( G \)-action.
    Define the full subcategory \( \Sh_{\AA^1}(\Sm_X^G) \) of \( \Sh(\Sm_X^G) \) to be the localization of \( \Sh(\Sm_X^G) \in \CAlg(\Pr^\L_\st) \) with respect to the \( \otimes \)-ideal generated by the map \( X\times\RR^1 \to X \) with trivial \( G \)-action on \( \RR^1 \).
\end{definition}

For each (finite-dimensional) \( G \)-representation \( V \), consider the map \( p\colon V \to \ast \) and the zero-section \( e\colon \ast \to V \).
Then we have \( p_\sharp s_\ast(\unit) \in \Sh(\Sm^G) \), which is thought of as the Thom spectrum of the form \( \S^V \).

\begin{definition}\label{dfn:SH}
    Define the \emph{genuine equivariant homotopy category} \( \SH^G_\topl \) to be the formal inversion
    \[ \Sh_{\AA^1}(\Sm^G)\left[p_\sharp s_\ast(\unit)^{-1}\mid V\in \Rep(G)\right] \; \in \CAlg(\Pr^\L). \]

    For a topological \( G \)-space \( X \), define its genuine equivariant stable homotopy category \( \SH^G_\topl(X) \) to be \( \Sh_{\AA^1}(\Sm_X^G) \otimes_{\Sh_{\AA^1}(\Sm^G)} \SH^G_\topl \).

    For a \( G \)-map \( f \colon X' \to X \), let \( f^\ast \colon \SH^G_\topl(X) \to \SH^G_\topl(X') \) denote the functor obtained by tensoring the functor \( f^\ast  \colon \Sh_{\AA^1}(\Sm_{X'}^G) \to \Sh_{\AA^1}(\Sm_X^G) \) with \( \SH^G_\topl \).

    For a proper \( G \)-map \( f \colon X' \to X \), by \cref{prop:Gproperpushforward}, the functor \( f^\ast \) is an internal left adjoint functor in \( \Mod_{\Sh_{\AA^1}(\Sm^G)}(\Pr^\L) \). Therefore, for such a proper \( G \)-map, \( f^\ast \) on \( \SH^G_\topl(-) \) is an internal left adjoint functor, whose right adjoint \( f_\ast \) has a further right adjoint which is denoted by \( f^! \).
\end{definition}

The formal inversion is essential to ensure that the upper shriek functors compute correct Thom spectra. 
We finally have the genuine equivariant analogue of \cref{const:BF}.

\begin{construction}\label{const:genuineBF}
    Let \( L \), \( X \) and \( S \) be topological spaces acted on by a Lie group \( G \) where we assume \(L\) to have proper \(G\)-action.
    Let \(f \colon L \to X\) be a proper \(G\)-map over \(S\).
  \begin{equation*}
      \begin{tikzcd}[row sep = small]
          L \ar[rr, "f"] \ar[rd, "r"'] & & X \ar[dl, "q"] \\
          & S &
      \end{tikzcd}
  \end{equation*}
  Then we have a map in \( \SH^G_\topl(S) \) of the following form.
  \[\BF_f \colon r_\ast f^!(\unit) = q_\ast f_\ast f^!(\unit) \xrightarrow{q_\ast(\mathrm{counit})} q_\ast(\unit)\]
\end{construction}

This is our definition of the \emph{genuine equivariant Bauer--Furuta map}.

\appendix
\section{The Seiberg--Witten map} \label{section:SW}

\subsection{Recollection of the Seiberg--Witten map}
Let us recall the construction of the Seiberg--Witten map \eqref{eq:SWmap}, following~\cite{Fur01} and~\cite{Fur97}.

In this section, \(\Mfd\) denotes the category of \(C^\oo\)-manifolds and \(C^\oo\)-maps between them. We equip \(\Mfd\) with the Grothendieck topology generated by open covers. The category \(\St\coloneqq \Sh(\Mfd;\an_{\le1})\) of (\(1\)-truncated) stacks will be useful. For example, we have a fully faithful embedding \(B\colon \Lie\Grp \subset \Grp(\St) \hookrightarrow \St_\ast\) given by the delooping functor.
All Lie groups, vector spaces and manifolds will be viewed as objects in \(\St\).
In the slice category \(\St_{/\mathcal{Y}}\) over a stack \(\mathcal{Y}\), we have \(\hom_\mathcal{Y}(-,-)\in\St_{/\mathcal{Y}}\) the internal hom and \(\hom_{/\mathcal{Y}}(-,-)\in\St\) the \(\St\)-enrichment.

Consider the Lie group \(\Spin^c(n)\), which can be defined as \(\left(\Spin(n){\times}\U(1)\right)/\diag\{\pm1\}\).
In dimension \(4\), we have an isomorphism \(\Spin(4) \cong \USp(1)\times\USp(1)\) which is exhibited by the representation \(\USp(1)\times\USp(1) \to \SO(4)\) given as follows.
\[
\USp(1)\times\USp(1)\ni \; (q_-, q_+) \quad \mapsto \quad q_-({-})q_+^{-1} \; \curvearrowright \HH \; (\cong \RR^4)
\]
The group \(\Spin^c(4)\) acts in the same way (where the \(\U(1)\)-part acts trivially) and the resulting representation is denoted by \(_-\HH_+\). We further need the following three representations of \(\Spin^c(4)\).
\begin{equation*}\begin{aligned}
    &_+ \HH \phantom{_\CC}\colon  \; \USp(1) {\times} \USp(1) {\times} \U(1) \ni & (q_-, q_+ , u) &\;\mapsto q_+({-})z^{-1} \curvearrowright \HH \\
    &_- \HH \phantom{_\CC}\colon   & (q_-, q_+ , u) &\;\mapsto q_-({-})z^{-1} \curvearrowright \HH \\
    &_+ \HH_+ \colon  & (q_-, q_+ , u) &\;\mapsto q_+({-})q_+^{-1} \curvearrowright \HH
\end{aligned}\end{equation*}
The two \(_{\pm}\HH\) are complex representations, where we let the complex number \(\sqrt{{-}1}\) act on \({}_\pm\HH\) as the right multiplication by \(i\in \HH\).
Using the orthonormal basis \((1,i,j,ji)\) of \(\HH\) as an orientation for \(_-\HH_+\), the imaginary part \(\Im(_+\HH_+)\), which is invariant under \(\Spin^c(4)\), is isomorphic to \(\Lambda^{2,+}(_-\HH_+)\) the \(({+}1)\)-eigenspace of the Hodge star operator acting on \(\Lambda^2\).
The projection map from \(\Lambda^2(_-\HH_+)\) onto \(\Lambda^{2,+}(_-\HH_+)\cong \Im({}_+\HH_+)\) will be denoted by the superscript \(({-})^+\).
We have two (complex and real, respectively) Clifford actions as follows.
\begin{equation}\label{eq:Cl-action}\begin{aligned}
(_-\HH_+ {\otimes_\RR} \CC ) \otimes {}_+\HH &\to {}_-\HH \quad;& (a+b\sqrt{{-}1}, \phi) &\mapsto a\phi + b\phi i  \\
_-\HH_+ \otimes_\RR {}_-\HH_+ &\to {}_+ \HH_+ \;;& (a,b) &\mapsto \overline{a} b 
\end{aligned}\end{equation}
And finally a quadratic form:
\[ \sigma \colon 
_+ \HH \ni \; \phi \quad \mapsto \quad - \phi i \overline{\phi} \otimes \sqrt{-1} \; \in \Lambda^{2,+}(_+\HH_+) {\otimes_\RR} \sqrt{-1} \RR
\]

Let \(X\) be a closed oriented \(4\)-manifold.
Take a riemannian metric \(g\) and a spin\(^c\) structure \(\mathfrak{s}\) on \(X\), i.e., \(\mathfrak{s}\) is a lift as in the following diagram in \(\St\).
\[
\begin{tikzcd}[row sep=small, column sep=large]
    & B{\Spin^c}(4) \ar[d] \\
    & B{\SO}(4) \ar[d] \\
    X \ar[r, "T^\ast_X"'] \ar[ru, "g"{description}, end anchor=west] \ar[ruu, dashed, "\mathfrak{s}"', end anchor=west, bend left] & B{\GL^+}(4)
\end{tikzcd}
\] 
In other words, \(\mathfrak{s}\) is a pair consisting of a \(\Spin^c(4)\)-torsor \(P\) on \(X\) together with an isomorphism of sheaves \(\Omega^1_X \cong \hom^{\Spin^c(4)}(P, {}_-\HH_+)\) where the right hand side is the mapping stack of \(\Spin^c(4)\)-equivariant maps.
Next, we take a spin\(^c\)-connection on the torsor \(P\) such that it lifts the riemannian connection on \(T^\ast_X\) associated with the metric \(g\). Such a connection corresponds exactly to a \(\U(1)\)-connection \(A\) (on the determinant line bundle of \(P\)), since the Lie algebra of \(\Spin^c(4)\) splits as a sum of \(\mathfrak{so}(4)\) and \(\mathfrak{u}(1)=\sqrt{{-}1}\RR\). We abuse notation and let the corresponding \(\Spin^c(4)\)-connection be denoted also by \(A\).

We construct the \emph{Seiberg--Witten map} as follows. The connection \(A+ a\sqrt{{-}1}\) defines the covariant derivative on the spaces such as \(\hom^{\Spin^c(4)}(P, {}_{(\pm)}\HH_{(+)})\) and thus we can post-compose with the Clifford actions~\eqref{eq:Cl-action} to obtain the Dirac-type operator of the following form.
\[
\hom^{\Spin^c(4)}\left(P,\, {}_-\HH_+ \oplus {}_+\HH\right) \to \hom^{\Spin^c(4)}\left(P,\, {}_-\HH \oplus ({}_+\HH_+{\otimes} \sqrt{{-}1})\right)
\]
In fact, it is of the form \((a,\phi) \mapsto \left(D_A\phi + a\phi i,\, d^\ast a \sqrt{-1} + (da)^+ \sqrt{-1}\right)\).
Consider another map on the same spaces given by the formula \[
(a, \phi) \mapsto (0, F_{A}^+ - \sigma(\phi) )
\] where \(F_A\) is the curvature \(2\)-form (valued in \(\sqrt{{-}1}\RR\)).
We also would like to add the locally constant functions on the left hand side, which is mapped to \(\hom^{\Spin^c(4)}(P,{}_+\HH_+)\) via the inclusion \(\RR\cong \Re({}_+\HH_+) \subset {}_+\HH_+\). Combining all together, we obtain the Seiberg--Witten map.
\begin{equation}\begin{aligned}\label{eq:monopole/A}
\mu_{g,\mathfrak{s}, A} \colon \hom^{\Spin^c}(P, {}_-\HH_+ \oplus {}_+\HH) \times H^0_{\dR}(X,\RR) \to \hom^{\Spin^c(4)}(P, {}_-\HH \oplus ({}_+\HH_+{\otimes}\sqrt{{-}1})) \\
(a,\phi,r) \mapsto (D_{A+a\sqrt{{-}1}}\phi,\, r + d^\ast a + F_{A+a\sqrt{{-}1}}^+ + \phi i \overline{\phi} \sqrt{{-}1})
\end{aligned}\end{equation}
For \(k>4\), the map on the Sobolev completions \(L^2_k \to L^2_{k-1}\) induced by the linear part, together with the map on \(L^2_k\)'s induced by the nonlinear part post-composed by the Sobolev embedding \(L^2_k \to L^2_{k{-}1}\), is the map we can apply our Bauer--Furuta construction: See~\cite[Lemma 3.1]{BF1} and \cref{lem:familyProper}.

Note that the Seiberg--Witten map is parameterized over the space of \(\U(1)\)-connections and has the gauge group equivariance, which we summarize in the following language.

\begin{remark}[Gauge group]
    Consider the stack \( B_\nabla {\Spin^c}(n) \) classifying \(\Spin^c(n)\)-torsors and connections on them, which is just the quotient stack of the lie algebra \( \mathfrak{g} \) acted on by \( g\in G \) via the formula \( g^{{-}1}dg + g^{{-}1}({-})g \). Let \( \Spin^c_\nabla(X) \) denote the stack \(\hom_{/B{\GL^+}(4)}(X,B_\nabla{\Spin^c}(4))\),
    which roughly classifies a triple \((g,\mathfrak{s},A)\) consisting of a metric, a spin\( ^c \)-structure and a spin\( ^c \)-connection.
    We similarly consider the stacks \( \mathrm{Met}(X) = \hom_{/B{\GL^+}(4)}(X,B{\SO}(4)) \) and \( \mathrm{Met}_\nabla(X) = \hom_{/B{\GL^+}(4)}(X,B_\nabla{\SO}(4)) \), where the forgetful map \( \mathrm{Met}_\nabla(X) \to \mathrm{Met}(X) \) admits a section \( \mathrm{LC} \colon \mathrm{Met}(X) \to \mathrm{Met}_\nabla(X) \) given by the riemannian connection.
    Form the following pullback and obtain our base stack \( \mathcal{S}_X \) for the Seiberg--Witten map.
    \[
    \begin{tikzcd}
        \mathcal{S}_X  \ar[r, ""] \ar[d, ""'] \ar[rd, phantom, very near start, "\lrcorner"]           	    & \Spin^c_\nabla(X) \ar[d, ""] \\
        \mathrm{Met}(X) \ar[r, "\mathrm{LC}"']    & \mathrm{Met}_\nabla(X)
    \end{tikzcd}
    \]
    Fixing a (metric and a) spin\(^c\)-structure giving a torsor \(P\), the fiber of the forgetful map \(\mathcal{S}_X \to \Spin^c_\nabla(X) \to \Spin^c(X)\) is isomorphic to the moduli stack \(\mathcal{A}(\det P)\) of \(\U(1)\)-connections on the determinant line bundle. The based looping \(\Omega \mathcal{A}(\det P)\) is called the \emph{gauge group} \(\mathcal{G}\), which is isomorphic to the group \(\hom(X,\U(1))\).
    
    Over the product \(X\times\mathcal{S}_X\), we have the \(\Spin^c(4)\)-torsor, say \(\mathcal{P}\), which is pulled-back along the evaluation map \(X{\times}\hom(X,B{\Spin}^c(4))\to B{\Spin^c}(4)\).
    The Seiberg--Witten map \(\mu\) can be considered as the map \begin{equation}\label{eq:monopole/stack}
        \mu \colon \hom^{\Spin^c(4)}_{\mathcal{S}_X}(\mathcal{P},\,{}_-\mathbb{H}_+{\oplus}{}_+\mathbb{H}) \times H^0_\dR(X,\mathbb{R}) \to \hom^{\Spin^c(4)}_{\mathcal{S}_X}(\mathcal{P}, \, {}_-\mathbb{H}\oplus{}_+\mathbb{H}_+{\cdot}\sqrt{{-}1})
    \end{equation} over \(\mathcal{S}_X\), which is defined by the same formula we presented above. Since the stack \(\mathcal{S}_X\) is highly structured, fixing a single spin\(^c\)-structure, the map \(\mu\) restricts to a \(\mathcal{G}\)-equivariant map, for trivial reason.
\end{remark}

Note that we have the Galois action \(\Gal(\mathbb{C}/\mathbb{R}) \to \Aut_{\Grp}(\Spin^c(n))\) given by the complex conjugation on the \(\U(1)\)-part of \(\Spin^c(n)\).
It induces a \(\Gal(\mathbb{C}/\mathbb{R})\)-action on the stack \(\Spin^c_\nabla(X)\). Let \((\mathfrak{s}',A')\) denote the conjugate of \((\mathfrak{s},A)\in\Spin^c_\nabla(X)\) by this \(\Gal(\mathbb{C}/\mathbb{R})\)-action.
This gives us a \(\Pin(2)\)-symmetry of the Seiberg--Witten map \eqref{eq:monopole/stack} as follows.

\begin{remark}[\(\Pin(2)\)-symmetry{~\cite{Fur97}}]
    Define order \(4\) automorphisms \(J\) as follows.
    On the source of \(\mu\), \(J\) acts by the formula \((\mathfrak{s},A;a,\phi,r) \mapsto (\mathfrak{s}',A';-a,\phi j,-r)\), and on the target of \(\mu\) by the formula \((\mathfrak{s},A;\varphi,b) \mapsto (\mathfrak{s}',A';\varphi j,-b)\). The map \(\mu\) commutes with these \(J\)-actions, and \(J\) and \(u\in H^0_\dR(X,U(1))\) satisfy the relations \(J\circ u = \overline{u}\circ J\) and \(J^2=-1\in\U(1)\). Therefore, in this way the Seiberg--Witten map \(\mu\) promotes to a map over the stack \(B{H}^0_\dR(X, \Pin(2))\).

    If we consider a spin\(^c\)-structure \(\mathfrak{s}\) coming from a spin structure, then the determinant line bundle is trivial and admits a trivial connection \(A_0\) which is fixed by the \(\Gal(\mathbb{C}/\mathbb{R})\)-action. Thus, the single Seiberg--Witten map \(\mu_{g,\mathfrak{s},A_0}\) \eqref{eq:monopole/A} promotes to a \(\Pin(2)\)-equivariant map.
\end{remark}

Another way of stating this \( \Pin(2) \)-symmetry is to observe that the representations and maps between them we have used so far are promoted to be equivariant under the larger group \( \Spin^{c-}(4) = \left(\Spin(4) {\times} \Pin(2)\right){/}\diag\{{\pm}1\} \) and to replace the base stack accordingly.
This perspective was introduced by~\cite{Nakamura13}.

The stacky constructions presented above would be readily applicable to our reformulation of the Bauer--Furuta construction \emph{if only} there exists a genuine six-functor formalism, defined over those geometric stacks, which should send \(BG\) of a Lie group to the genuine \(G\)-spectra and subsume the Atiyah duality for \(G\)-manifolds.

\subsection{Some example of properness}
We here deal with an abstract lemma that enables us to conclude the properness of the families Seiberg--Witten map.
Essentially, it is a recollection of the arguments given in~\cite{BF1}.
We continue to assume that all topological spaces are Hausdorff. 
First, observe the following.
\begin{lemma}\label{lem:CGtotalspace}
    Let \(S\) be compactly generated and \(\pi\colon E \to S\) be a Banach vector bundle. Assume that either of the following holds.
    \begin{enumerate}
        \item \(S\) is first countable,
        \item \(S\) is locally compact, or
        \item the local triviality of the fiber bundle \(\pi\) is interpreted in the cartesian closed category of compactly generated topological spaces.
    \end{enumerate}
    Then the total space \(E\) is compactly generated.
\end{lemma}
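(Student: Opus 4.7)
My plan is to reduce the lemma to a purely local assertion about products, using the fact that being compactly generated is itself a local property for open covers. I first record this locality principle:

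\medskip

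\noindent\textbf{Locality claim.} If $\{W_\alpha\}$ is an open cover of a Hausdorff space $X$ and each subspace $W_\alpha$ is compactly generated, then $X$ is compactly generated. Indeed, suppose $A\subset X$ meets every compact $K\subset X$ in a closed subset of $K$. For each $\alpha$ and each compact $K\subset W_\alpha$, the set $K$ is also compact in $X$, so $A\cap K$ is closed in $K$; since $W_\alpha$ is compactly generated, $A\cap W_\alpha$ is closed in $W_\alpha$. Because $W_\alpha$ is open in $X$, the complement $W_\alpha\setminus A$ is open in $X$, and taking the union over $\alpha$ shows $X\setminus A$ is open. Hence $A$ is closed.

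\medskip

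Next I would choose a trivializing open cover $\{U_i\}$ of $S$ with trivializations $\pi^{-1}(U_i)\cong U_i\times V_i$ for Banach spaces $V_i$. By the locality claim applied to the cover $\{\pi^{-1}(U_i)\}$ of $E$, it suffices to show that each $U_i\times V_i$ is compactly generated. In case~(3) this is immediate by definition, since the trivialization takes place in the cartesian-closed category of compactly generated spaces, where such a product is compactly generated by construction.

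For cases~(1) and~(2) I rely on two standard facts: every Banach space is metrizable, hence first countable and compactly generated; and open subspaces of first countable (respectively, locally compact Hausdorff) spaces are again first countable (respectively, locally compact Hausdorff). In case~(1), $U_i$ inherits first countability from $S$ and $V_i$ is first countable, so the product $U_i\times V_i$ is first countable, hence sequential, hence compactly generated. In case~(2), $U_i$ is locally compact Hausdorff and $V_i$ is compactly generated, so the classical result that $X\times Y$ is compactly generated whenever $X$ is locally compact Hausdorff and $Y$ is compactly generated yields the conclusion.

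I do not expect any serious obstacle here; the only subtlety is keeping track of which convention of \emph{compactly generated} is in force (e.g., whether one requires weak Hausdorffness), but the overall Hausdorff hypothesis of \cref{conv:allHaus} ensures this is harmless. The essential content is just the locality of compact generation together with the trivialization-by-trivialization verification above.
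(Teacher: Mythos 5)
Your proposal is correct and follows essentially the same route as the paper: reduce by locality of compact generation to a trivialized piece $U_i\times V_i$, then conclude via first countability of finite products in case (1), the classical ``locally compact $\times$ compactly generated'' result in case (2), and the definition in case (3). The only difference is that you spell out the proof of the locality claim, which the paper simply asserts.
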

\begin{proof}
    Since being compactly generated is a local property, 
    we may assume that the total space \(E\) is of the form \(U\times \mathcal{H}\) for some Banach space \(\mathcal{H}\).
    Note that a finite product of first countable topological spaces (such as \(U\) and \(\mathcal{H}\)) is first countable and hence compactly generated.
    Also, a product of a compactly generated space \(\mathcal{H}\) and a locally compact space \(U\) is also compactly generated.
\end{proof}
Of course, in the previous lemma, \(S\) can be a product of a first countable space and a locally compact space. 
We note that the homotopy invariance (\cref{cor:contractible} or \cref{lem:globallyconstant}) for (Banach) vector bundles remains valid even when we replace the product topology by compactly generated ones, though in certain statements such as \cref{lem:locproperpullback}, the choice of product topology seems to be crucial.

We next record a variant of~\cite[Lemma 2.3]{BF1}.
The proof goes basically the same as that of~\cite[Lemma 2.3]{BF1}. This will serve as a source for the families Bauer--Furuta invariant based on our formalism.

\begin{lemma}
    \label{lem:familyProper}
    Let \(f\colon E' \to E\) be a map between Banach vector bundles over a base \(S\) which is of the form \(f=l+c\) for \(l\) a linear Fredholm map and \(c\) a (possibly nonlinear) compact map in the sense that it maps a disk bundle over a compact subset \(K\subset S\) to a relatively compact subset of \(K\times_S E\). Assume that on each fiber the map \(f_x \colon E'_x \to E_x\) has bounded preimages of bounded subsets and that the vector bundle \(q\colon E \to S\) satisfies either of the assumptions in \cref{lem:CGtotalspace}.
    Then \(f\) is a proper map.
\end{lemma}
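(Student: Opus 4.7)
The plan is to reduce the statement to the standard characterization ``preimages of compact sets are compact'' and then to verify this for each compact \( K\subset E \) by a sequential extraction argument, leveraging the Fredholm structure of \(l\) fiberwise, the compactness hypothesis on \(c\), and the fiberwise bounded-preimage assumption. By \cref{lem:CGtotalspace}, the total spaces \(E\) and \(E'\) are compactly generated Hausdorff; for a continuous map into a compactly generated Hausdorff space, properness is equivalent to the preimages of compact subsets being compact. So fix a compact \(K\subset E\) and note that \(K_S\coloneqq q(K)\subset S\) is compact; the preimage \(f^{-1}(K)\) lies entirely inside \(E'\mid_{K_S}\).

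Given a sequence \(v_n\in f^{-1}(K)\), we aim to extract a convergent subsequence. After passing to subsequences, \(s_n\to s_\infty\in K_S\) and \(y_n\coloneqq f(v_n)\to y_\infty\in K\). Choose a neighborhood \(U\) of \(s_\infty\) in \(K_S\) over which both bundles trivialize, so that \(f\) is represented on this chart by a jointly continuous family \(f_s=l_s+c_s\colon \mathcal{H}'_0\to \mathcal{H}_0\); here \(l_s\) is a norm-continuous family of Fredholm operators and \(c\) sends any disk subbundle over \(U\cap K_S\) into a relatively compact subset of \(\mathcal{H}_0\).

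The main obstacle is the first and most delicate step: showing that \(\{v_n\}\) is bounded in \(\mathcal{H}'_0\). I would proceed by contradiction. Assuming \(\|v_n\|\to\infty\) and normalizing \(\tilde v_n=v_n/\|v_n\|\), one exploits that \(l_s\) depends norm-continuously on \(s\), that \(l_{s_\infty}\) is Fredholm with closed range and finite-dimensional kernel, and that the fiberwise map \(f_{s_\infty}\) has bounded preimages of bounded subsets. Writing \(\mathcal{H}'_0 = \ker l_{s_\infty}\oplus P\) with \(P\) a closed complement, the open mapping theorem gives a uniform lower bound \(\|l_{s_\infty}(p)\|\gtrsim \|p\|\) for \(p\in P\). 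Applying this to the decomposition of \(v_n\), combined with the convergence \(l_{s_n}\to l_{s_\infty}\) in norm and the control of \(c_{s_n}(v_n)\) coming from fiberwise boundedness and the uniform-on-compacts behavior of \(c\), one shows that the \(P\)-component of \(v_n\) stays bounded. The remaining kernel component lies in a finite-dimensional space, and one forces a nonzero limit there that, together with the fiberwise properness of \(f_{s_\infty}\) and the continuity of the family, contradicts \(\|v_n\|\to\infty\).

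Once boundedness is secured, say \(\|v_n\|\le R\), the disk-bundle compactness hypothesis on \(c\) implies that \(\{c_{s_n}(v_n)\}\) has a convergent subsequence, and hence so does \(l_{s_n}(v_n)=y_n-c_{s_n}(v_n)\). A final invocation of the Fredholm property of the family \(l\)---namely, that on a compact parameter set a norm-continuous Fredholm family has uniformly closed range on any complement of the kernels---together with the finite-dimensionality of the kernels yields a convergent subsequence of \(v_n\) in \(E'\mid_{K_S}\). Since \(E'\) is Hausdorff and its limit point projects to \(s_\infty\) and maps to \(y_\infty\) under \(f\), this establishes compactness of \(f^{-1}(K)\) and completes the proof.
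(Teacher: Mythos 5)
Your strategy (reduce to ``preimages of compacts are compact'' and extract convergent subsequences) is genuinely different from the paper's, which never argues with sequences: after reducing to a trivial bundle over a compact \(K\) and to the restriction \(f|_{K\times A}\) for \(A\) bounded (this reduction is where the fiberwise bounded-preimage hypothesis enters), the paper factors \(f|_{K\times A}\) as a closed embedding --- admitting an explicit continuous retraction built from the splitting \(\mathcal{H}'\cong\ker(l_{x_0})\times\im(l_{x_0})\) and the finite-rank kernel bundle \(F\) with projection \(\rho\) --- into the product of \(K\times\mathcal{H}\) with the relatively compact factors \(\overline{c(K\times A)}\), \(\overline{\rho(K\times A)}\), \(\overline{\rho l(K\times A)}\), followed by a projection which is proper precisely because those factors are relatively compact. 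Measured against this, your proposal has two genuine gaps.

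First, the step you yourself flag as the main obstacle --- boundedness of \(\{v_n\}\) --- is not actually established, and the ingredients you invoke cannot establish it. The compactness hypothesis on \(c\) constrains only its restriction to disk bundles, so for \(\|v_n\|\to\infty\) you have no control on \(c_{s_n}(v_n)\), hence none on \(l_{s_n}(v_n)=y_n-c_{s_n}(v_n)\); after normalizing, \(c_{s_n}(v_n)/\|v_n\|\) is not \(c_{s_n}(\tilde v_n)\) because \(c\) is nonlinear, so the usual linear compact-perturbation trick does not apply, and the error term \((l_{s_\infty}-l_{s_n})(v_n)\) is a product of a quantity tending to \(0\) with one tending to \(\infty\). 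The fiberwise hypothesis bounds \(f_s^{-1}(\text{bounded})\) for each fixed \(s\) but carries no uniformity in \(s\), and continuity of the family does not upgrade it; this is exactly why the paper never tries to bound \(f^{-1}(C)\) globally and instead works on \(K\times\overline{A}\) from the outset. Second, even granting boundedness, a sequential extraction argument only yields sequential compactness of \(f^{-1}(K)\). Under the hypotheses of \cref{lem:CGtotalspace} the base, hence \(q(K)\), may be compact Hausdorff without being sequential (e.g.\ locally compact but not first countable), in which case you cannot even pass to a convergent subsequence of \(s_n\), and sequential compactness neither implies nor follows from compactness. Your argument would require \(S\) metrizable, which is strictly less general than the statement.
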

\begin{proof}
    Since properness is local on the target, we may assume that the vector bundles are trivial.
    Recall that a map to a compactly generated space is proper if and only if the preimages of compact subspaces of the target are (quasi)compact.  Therefore, we may assume that the map \(f\) is of the form \(K \times \mathcal{H}' \to K \times \mathcal{H}\) for \(K\) compact.
%
    Since the preimage \(f^{-1}(y)\) of a point is contained in a bounded neighborhood (in a fiber) by assumption, it suffices to show that for any bounded subset \(A \subset \mathcal{H}'\) the restriction of \(f\) to \(K\times A\) is proper.

    Take any point \(x_0\in K\). Choose splittings \(\mathcal{H}' \cong \ker(l_{x_0})\times \im(l_{x_0})\) and \(\mathcal{H} \cong \im(l_{x_0}) \times \cok(l_{x_0})\). Replace \(K\) by a smaller compact neighborhood of \(x_0\) so that the composite \(\mathcal{H}' \xrightarrow{l_x} \mathcal{H} \xrightarrow{\pr} \im(l_{x_0})\) is surjective for every point \(x\in K\) (\cref{cor:SurjOpen}). So we take the kernels of that composite for \(x\in K\) and form a (finite-dimensional) vector bundle \(F\) over \(K\) (\cref{cor:submersiveKernel}). We may assume that the subbundle \(F\subset K \times \mathcal{H}'\) splits \(\rho \colon K\times \mathcal{H}' \to F\) (since it is finite-dimensional). Both \(c\) and \(\rho\) are compact maps. The map \(f\mid_{K\times A}\) decomposes into the following.
    \[
    \begin{tikzcd}[column sep = tiny, row sep=tiny]
        \!K {\times} A \! \ar[r] &  \mathcal{H} {\times} \overline{c(K{\times} A)} {\times}_K \overline{\rho(K{\times} A)} {\times}_K \overline{\rho l(K{\times} A)} \ar[r, "\cong"] &  (K{\times}\mathcal{H}) {\times}_K \overline{c(K{\times} A)} {\times}_K \overline{\rho(K{\times} A)} {\times}_K \overline{\rho l(K{\times} A)} \ar[r, "\pr"] & K{\times} \mathcal{H} \\
        (x,a) \ar[r, phantom, "\mapsto"] & 
        \begin{pmatrix}
            \pr_{\im(l_{x_0})}(l_x(a)) \\ c_x(a) \\ \rho_x(a) \\ \rho_x l_x (a)
        \end{pmatrix}, \qquad \qquad \qquad \begin{pmatrix}
            x \\ y \\ z \\ w
        \end{pmatrix} \ar[r, phantom, "\mapsto"] &
        \begin{pmatrix}
            x + z + w \\ y \\ z \\ w
        \end{pmatrix}
    \end{tikzcd}
    \] The first map is a closed embedding since it admits a retraction: \(a= \pr_{\im(l_{x_0})}(l_x(a))+ \rho_x (a)\). The third map is proper since \(c, \rho, \rho l\) are all compact maps.
\end{proof}

\section{The six-functor formalism for sheaves on topological spaces}\label{section:top6FF}

This section is aimed to recall the six-functor formalism for sheaves of spectra on topological spaces.
It serves as a spectral refinement of the work of~\cite{SS16}, where the functors \(f_!\dashv f^!\) are defined for separated locally proper maps.
One can compare to the paper~\cite{Volpe}, which provides the six-functor formalism for locally compact Hausdorff spaces.
Since we need the shriek functors along Fredholm maps between locally non-compact spaces (such as infinite-dimensional Banach spaces), we record the construction of that six-functor formalism, using the arguments in~\cite[A.5]{Mann} pioneered by Liu--Zheng.
We would like to note that it may also be possible to provide such a six-functor formalism by developing the theory of \emph{locally rigid algebras}~\cite{LocRig} in \(\Mod(\Pr^\L_\st)\) since separated locally proper maps of topological spaces are typical examples. We, however, do not pursue that perspective in this paper.

Let \(\Top\) denote the \((1,1)\)-category of topological spaces whereas \(\Top_\oo\) denote the category of \(\oo\)-topoi and geometric morphisms between them. We write \(\Top_\oo^{\L,\lex}\) for the subcategory of \(\Pr^\L\) equivalent to \(\Top_{\oo}^{\op}\).
Recall the functor
\[
(\Sh(-; \an), ({-})^\ast) \colon \Top^{\op} \to \Pr^\L
\]
from \cite[Ch.~6]{HTT} given as the following composite.
\[\begin{tikzcd}[column sep=large]
    \Top^\op \ar[r, "\Op(-)"] & \Locale^{\L,\lex} \simeq \Top_{\oo,0\textrm{-loc}}^{\L,\lex} \ar[r, hook] & \Top_\oo^{\L,\lex} \ar[r, hookrightarrow, "\mathrm{forgetful}"] & \Pr^\L
\end{tikzcd}\]
It takes a topological space \(Y\) to the category of sheaves in \(\an\), which is uniquely characterized by the following universal property
\begin{equation*}
    \Fun^\L(\Sh(Y;\an), \mathscr{C}) \xrightarrow[\sim]{\qquad} \Fun^{\mathrm{eff.epi}}(\Op(Y), \mathscr{C})
\end{equation*}
for \(\mathscr{C}\) a presentable category. (In fact this characterization proves the above functoriality.)
Here \(\Fun^\bullet(-,-)\) is the category of functors preserving colimit diagrams specified by \(\bullet\).

\begin{notation}\label{notation:Sh(Y)}
  For a topological space \(Y\), let \(\Sh(Y)\) denote the category \(\Sh(Y;\Sp)\) of sheaves of spectra, which is equivalent to \(\Fun^\R(\Sh(Y;\an)^\op, \Sp)\) and to \(\Sh(Y;\an)\otimes \Sp\).
  Thus, we have the \(\ast\)-functoriality:
  \begin{equation*}\begin{tikzcd}[row sep=0.3em, column sep=small]
  \Sh(-) \ar[r, phantom, ":"] & \Top^\op \ar[rr] && \CAlg(\Pr^\L_\st) \\
  & Y \ar[ddd, "f"'] \ar[rr, phantom, ""{name=Y}] && \Sh(Y) \\ &&& \\ &&& \\
  & X \ar[rr, phantom, ""'{name=X}] && \Sh(X) \ar[uuu, "f^\ast"']
  \arrow[from=Y, to=X, phantom, "\mapsto"]
  \end{tikzcd}\end{equation*}
  The right adjoint to the functor \(f^\ast\) is denoted by \(f_\ast\).
\end{notation}

In what follows, we will freely use the following fact.
\begin{remark}\label{fact:Radjoftensor}
  Let \(\mathscr{C}\), \(\mathscr{D}\) and \(\mathscr{E}\) be all presentable. Let \(f\colon \mathscr{C} \to \mathscr{D}\) be a colimit-preserving functor.

  If \(f\) admits a fully faithful right adjoint \(f^\R\), then the functor \((f\otimes\id_\mathscr{E})^\R\) right adjoint to the functor \(f\otimes \id_\mathscr{E}\colon \mathscr{C} \otimes \mathscr{E} \to \mathscr{D} \otimes \mathscr{E}\) is fully faithful.

  If the functor \(f^\R\) right adjoint to \(f\) admits a further right adjoint \(f^{\R\R}\), 
  then the functor \((f\otimes \id)^\R\) is isomorphic to \(f^\R\otimes \id_\mathscr{E}\).
  Combined with the first remark, every functor \(f\colon \mathscr{C} \to \mathscr{D}\) that admits a colimit-preserving fully faithful right adjoint \(f^\R\) is tensored up to an adjoint triple \(f\otimes \id_\mathscr{E} \dashv f^\R\otimes \id_\mathscr{E} \dashv (f^\R \otimes \id_\mathscr{E})^\R\) with \(f^\R\otimes \id_\mathscr{E}\) fully faithful.
\end{remark}
The first fact follows from how a tensor product can be constructed in terms of presentations. The second fact follows from the observations that such an \(f\) is an internal left adjoint in \(\Pr^\L\) and that \((\Pr^\L, \otimes)\) has a structure of a symmetric monoidal \(2\)-category.

\subsection{Locally proper maps}

\begin{convention}\label{conv:(loc)proper}
  \(\mbox{}\)
  We here fix some basic terminology regarding compactness.
  \begin{enumerate}
      \item We say a map between topological spaces is \emph{proper} if it is separated and universally closed. For example, a map onto a singleton is proper if and only if the topological space is compact Hausdorff.

      \item We say a map \(f\) is \emph{locally universally closed} if it is locally proper in the sense of~\cite[Definition 2.3]{SS16}.
      That is, if for any (open) neighborhood \(V\) of a point \(y\) on the source there exist a (open) neighborhood \(U\) of \(f(y)\) in the target and a neighborhood \(N\subset f^{-1}(U)\cap V\) of \(y\) such that \(f\) restricts to a universally closed map \(N\to U\).
      For example, a map onto a singleton is locally universally closed if and only if the topological space is locally compact.

      \item We define a \emph{locally proper map} in the same way: For any neighborhood \(V\) of a point \(y\) on the source, there exist neighborhoods \(U\) of \(f(y)\) and \(N\subset f^{-1}(U)\cap V\) of \(y\) such that the restriction \(f\colon N \to U\) is proper.
      
      We remark that separated and locally universally closed maps are automatically separated locally proper.

      \item An \emph{immersion} is a map that induces a homeomorphism onto the image which is locally closed in the target. In other words, an immersion is a locally proper embedding.
  \end{enumerate}
\end{convention}

The following well-known lemma, the existence of a compactification relative to a base, will provide later (\cref{thm:top6FF}) well-defined shriek functors for such maps. One can refer to~\cite[II. 8]{FibwiseTop}, and we do not claim any originality of the following argument.

\begin{lemma}\label{lem:fiberwisecompactification}
    A map \(f\colon Y \to S\) is separated locally proper if and only if it is of the form \(f = pj\) where \(j\colon Y \to X\) is an open embedding and \(p\colon X \to S\) is a proper map.
\end{lemma}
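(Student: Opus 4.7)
The \( (\Leftarrow) \) direction is essentially formal. Given \( f = pj \) with \( j \) an open embedding and \( p \) proper, \( f \) is separated. For local properness at \( y \in Y \) with an open neighborhood \( V \) (also open in \( X \) via \( j \)), Hausdorffness of \( X \) separates \( y \) from the compact set \( K = p^{-1}(f(y)) \smallsetminus V \) by disjoint opens \( A \ni y \) and \( B \supset K \); after shrinking \( A \) inside \( V \), its closure \( \overline{A} \) still avoids \( K \), so properness of \( p \) furnishes an open neighborhood \( U := S \smallsetminus p(\overline{A} \smallsetminus V) \) of \( f(y) \). Then \( N := \overline{A} \cap p^{-1}(U) \subset V \) is closed in the proper base change \( p^{-1}(U) \to U \), hence \( f|_N \colon N \to U \) is proper, witnessing local properness at \( y \).

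For the substantive \( (\Rightarrow) \) direction, I would carry out the fiberwise one-point compactification in the spirit of \cite[II.8]{FibwiseTop}. Set \( X := Y \sqcup S \) as a set, with \( p\colon X \to S \) defined by \( p|_Y = f \) and \( p|_S = \mathrm{id}_S \), and \( j\colon Y \hookrightarrow X \) the inclusion. Declare \( W \subset X \) open iff \( W \cap Y \) is open in \( Y \) and for each \( s \in W \cap S \) there exist an open neighborhood \( U \subset W \cap S \) of \( s \) in \( S \) and a closed subset \( F \subset f^{-1}(U) \) with \( f|_F \colon F \to U \) proper and \( f^{-1}(U) \smallsetminus F \subset W \). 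The points of \( S \subset X \) serve as ``points at infinity'' in the fibers of \( f \), with a neighborhood basis of such \( s \) given by sets \( U \cup (f^{-1}(U) \smallsetminus F) \) for the allowed pairs \( (U, F) \).

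The routine verifications I would carry out are: (a) this defines a topology, with closure under finite intersections using that over a common open base \( U \) the union \( F_1 \cup F_2 \) of closed proper subsets is again closed proper; (b) \( j \) is an open embedding restoring the original topology of \( Y \), and \( S \hookrightarrow X \) is a continuous section of \( p \); (c) \( X \) is Hausdorff --- two points of \( Y \) or of \( S \) separate using the Hausdorff property downstairs, while a point \( y \in Y \) separates from \( s = f(y) \in S \) precisely by invoking separated local properness to find a proper \( f|_N \colon N \to U \) with \( y \) in the \( Y \)-interior of \( N \) and (after shrinking) \( N \) closed in \( f^{-1}(U) \), yielding the disjoint opens \( \mathrm{int}(N) \ni y \) and \( U \cup (f^{-1}(U) \smallsetminus N) \ni s \).

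The main obstacle is showing that \( p \) is universally closed. For closedness of a given closed \( Z \subset X \), take \( s \notin p(Z) \); openness of \( X \smallsetminus Z \) at \( s \) yields, by the very definition of the topology, an open \( U \ni s \) in \( S \) and a closed proper \( F \subset f^{-1}(U) \) with \( f^{-1}(U) \cap Z \subset F \). Then properness of \( f|_F \) makes \( p(F \cap Z) \) closed in \( U \), and it misses \( s \) (otherwise \( s \in p(Z) \)), so \( U \smallsetminus p(F \cap Z) \) is an open neighborhood of \( s \) disjoint from \( p(Z) \). The whole construction is compatible with arbitrary base change along \( S' \to S \) since closed proper subsets over \( U \) pull back to closed proper subsets over \( U \times_S S' \), so closedness of \( p \) upgrades to universal closedness.
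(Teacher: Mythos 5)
Your proof is correct and follows the same strategy as the paper: the fibrewise one-point compactification $X=Y\sqcup S$ in the style of James, with neighborhoods of a point at infinity $s$ given by complements of closed subsets that are proper over a neighborhood of $s$. Two points where your route genuinely differs are worth noting. First, for the easy direction the paper simply invokes closure of separated locally proper maps under composition, whereas you prove from scratch that a proper map is locally proper; your argument is fine, but note that you should appeal to separatedness of $p$ (the closed diagonal lets you separate $y$ from the disjoint quasicompact set $K\subset p^{-1}(f(y))$ by the usual finite-cover argument) rather than to Hausdorffness of $X$, which is not assumed in this appendix --- the same remark applies to your claim in (c) that $X$ is Hausdorff, where what is needed and available is only that $p$ is separated. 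Second, for universal closedness the paper checks that $p$ is closed, separated, and has quasicompact fibres and then uses the standard Bourbaki criterion, whereas you upgrade closedness to universal closedness by observing that the compactification is compatible with base change (closed subsets proper over $U$ pull back to closed subsets proper over $U\times_S S'$, so the compactification topology on $(Y\times_S S')\sqcup S'$ refines the fibre-product topology, and closedness of the base-changed map follows). Both arguments work; yours avoids the quasicompact-fibres criterion at the cost of the (brief but genuine) verification of base-change compatibility.
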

\begin{proof}
    Since both \(p\) and \(j\) are separated locally proper and since the collection of separated locally proper maps are closed under compositions, any map of the form \(f=pj\) is separated locally proper.

    Conversely, we wish to construct an explicit fiberwise one-point compactification \(Y^{+S}\) of a separated locally proper map \(f\).
    Define a topology on the underlying set \(Y\sqcup S\) as follows.
    \begin{enumerate}
        \item For each open \(V\subset Y\), the subset \(V\sqcup \varnothing \subset Y \sqcup S\) is open.
        \item For each \(x=f(y)\in f(Y)\) and for each neighborhoods \(U\ni x\) and \(N\ni y\) with \(f\colon N \to U\) proper, the subset \(\left(f^{-1}(U) - N\right)\sqcup U\) is a neighborhood of \(x \in \varnothing\sqcup S \subset Y\sqcup S\).\label{item:nbhd/f(Y)}
        \item For each point \(x\notin f(Y)\) of \(S\) and for each neighborhood \(U\) of \(x\), the subset \(f^{-1}(U)\sqcup U\) is a neighborhood of \(x \in \varnothing\sqcup S \subset Y\sqcup S\).\label{item:nbhd/S-f(Y)}
        \item Define the topological space \(Y^{+S}\) to be the final topology on \(Y\sqcup S\) satisfying all the above conditions. Each of the conditions \eqref{item:nbhd/f(Y)} and \eqref{item:nbhd/S-f(Y)} forms a neighborhood basis of that point \(x\).
    \end{enumerate}
    Then the obvious maps \(j\colon Y \to Y^{+S}\) and \(p\colon Y^{+S} \to S\) are continuous, \(j\) is open, \(p\) is separated and \(p\) has (quasi)compact fibers.
    We need to prove that \(p\) is a closed map. In other words, we wish to show that for any point \(x\in S\), every neighborhood of \(p^{-1}(x)\) contains a subset of the form \(p^{-1}(U)\) for some neighborhood \(U\) of \(x\).

    First, for a point \(x\notin f(Y)\) of \(S\), any neighborhood of \(p^{-1}(x)\) contains a subset of the form \(f^{-1}(U)\sqcup U\) for some neighborhood \(U\ni x\) by \eqref{item:nbhd/S-f(Y)}, i.e., it contains \(p^{-1}(U)\) for some neighborhood \(U\) of \(x\).
    Next, for a point \(x\in f(Y)\), let \(V\) be a neighborhood of \(p^{-1}(x)\). Then since \(V\) is a neighborhood of \(x\), by \eqref{item:nbhd/f(Y)} it contains a subset of the form \((f^{-1}(U)-N)\sqcup U\) for some neighborhood \(U\) of \(x\) and some \(N\subset f^{-1}(U)\) with \(N \to U\) a closed map. Thus, we find a neighborhood \(U'\) of \(x\) contained in \(U\) such that \(f^{-1}(U')\cap N \subset V\cap N\). It follows that \(p^{-1}(U') \subset V\).
    Therefore, we conclude that \(p\) is a closed map.
\end{proof}

\begin{lemma}[Künneth equivalence] \label{lem:locproperpullback}
    Let \(f\) be a locally universally closed map and consider a cartesian square of topological spaces as follows. \begin{equation*}
        \begin{tikzcd}
            Y' \ar[r, "g'"] \ar[d, "f'"'] \ar[dr, "\lrcorner", phantom, very near start] & Y \ar[d, "f"] \\
            S' \ar[r, "g"] & S
        \end{tikzcd}
    \end{equation*}
    Then the following square is cocartesian in \(\Locale^{\L,\lex}\).
    \begin{equation*}
        \begin{tikzcd}
            \Op(S) \ar[r, "g^\ast"] \ar[d, "f^\ast"] & \Op(S') \ar[d] \\
            \Op(Y) \ar[r] & \Op(Y')
        \end{tikzcd}
    \end{equation*}

    Consequently, we have a cocartesian square in \(\CAlg(\Pr^\L_\st)\) which exhibits an equivalence 
    \[
    \Sh(S' \times_S Y) \simeq \Sh(S') \otimes_{\Sh(S)} \Sh(Y)
    \] by~\cite[Corollary 1.10]{aoki2023sheavesspectrumadjunction}.
\end{lemma}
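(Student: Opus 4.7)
The plan is to first prove the cocartesian square in $\Locale^{\L,\lex}$, from which the spectral equivalence will follow formally. Unwinding the duality $\Locale^{\L,\lex} \simeq \Locale^{\op}$, the cocartesian property of the locale square is equivalent to asking that the topological pullback $Y' = Y \times_S S'$ represent the pullback in $\Locale$ of the cospan $\Op(Y) \to \Op(S) \leftarrow \Op(S')$.

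To verify this, I would first reduce $f$ to a standard form. Under \cref{conv:allHaus} every space is Hausdorff, so the locally universally closed map $f$ is automatically separated, hence separated locally proper. \cref{lem:fiberwisecompactification} then produces a factorization $f = p \circ j$ with $j \colon Y \hookrightarrow X$ an open embedding and $p \colon X \to S$ proper. Pulling back along $g \colon S' \to S$ yields a paste of two cartesian squares, so by the pasting lemma for pushouts in $\Locale^{\L,\lex}$ it suffices to treat the open-embedding case and the proper case independently.

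For open embeddings the argument is immediate: the pullback $Y' \subset X'$ is again an open embedding, which corresponds to localic basechange of an open sublocale, and this matches on both sides because an open sublocale is classified by an element of the frame itself. The technical heart is the proper case, where one must show that the canonical map of frames $\Op(X) \otimes_{\Op(S)} \Op(S') \to \Op(X \times_S S')$ is an isomorphism. I would approach this by verifying that the ``rectangles'' $U \times_S V$ with $U \in \Op(X)$ and $V \in \Op(S')$ form a basis of the product topology on $X \times_S S'$; this uses the tube lemma together with the compactness of the fibers of $p$, and is equivalent to the classical exponentiability of proper maps in $\Locale$.

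The spectral statement then follows by tensoring the established cocartesian square in $\Top_\oo^{\L,\lex} \subset \Pr^\L$ with $\Sp \in \CAlg(\Pr^\L)$ to obtain a cocartesian square in $\Pr^\L_\st$, and invoking~\cite[Corollary 1.10]{aoki2023sheavesspectrumadjunction} to identify this pushout with the relative tensor product in $\CAlg(\Pr^\L_\st)$. I expect the proper-case frame comparison to be the main obstacle: while classical, it requires some care to turn the tube lemma into a clean assertion about the frame tensor product, or alternatively to invoke exponentiability of proper maps in $\Locale$ as a black box.
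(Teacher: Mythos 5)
Your route is genuinely different from the paper's. The paper does not factor \(f\) at all: it works directly with the locally universally closed hypothesis, following the pattern of \cite[Proposition 7.3.1.11]{HTT}. It presents the frame pushout \(\mathscr{L}=\Op(S')\otimes^{\lex}_{\Op(S)}\Op(Y)\) by rectangles \(U'\otimes_S V\), constructs an explicit section \(\theta(W)=\bigcup_{U'\times_S V\subset W}U'\otimes_S V\) of the comparison map \(\phi\colon\mathscr{L}\to\Op(Y')\), and proves \(\theta\phi=\id\) by a tube-lemma argument: cover \(V\) by interiors of sets \(T_\beta\) on which \(f\) restricts to a universally closed map, use quasicompactness of \(f^{-1}(x)\cap T_\beta\) to extract a finite subcover, and use closedness of \(T_\beta\to U_\beta\) to shrink the base. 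Your decomposition \(f=pj\) via \cref{lem:fiberwisecompactification} cleanly isolates the trivial open-embedding case from a globally proper case, and the pasting of pushouts is fine; the trade-off is that you still have to carry out essentially the same tube-lemma computation for \(p\), so the decomposition does not actually discharge the technical heart, it only relocates it.

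Two concrete issues. First, \cref{conv:allHaus} is scoped to \cref{section:BF}, not to the appendix where this lemma lives; the lemma is stated for an arbitrary locally universally closed map with no separatedness hypothesis, and \cref{lem:fiberwisecompactification} requires separated locally proper. So your reduction proves a strictly weaker statement than the one claimed (the paper's direct argument never needs separatedness, since universally closed maps already have quasicompact fibers). Second, in the proper case the assertion you propose to verify --- that the rectangles \(U\times_S V\) form a basis of \(X\times_S S'\) --- is automatic from the definition of the fiber product topology and only gives surjectivity of \(\phi\); the actual content is injectivity, i.e.\ that whenever \(U'\times_S V\subset\bigcup_\alpha U'_\alpha\times_S V_\alpha\) holds in \(\Op(Y')\), the inequality \(U'\otimes_S V\le\bigcup_\alpha U'_\alpha\otimes_S V_\alpha\) already holds in the frame pushout. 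That is where the tube lemma and fiber compactness are really spent, and it is the step you would need to write out (or locate a citable relative version of the ``localic product agrees with topological product'' statement over a base, which is less standard than the absolute locally compact case). The final passage to \(\CAlg(\Pr^\L_\st)\) via \cite[Corollary 1.10]{aoki2023sheavesspectrumadjunction} matches the paper and is fine.
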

\begin{proof}
    The proof given here is a slight variant of the argument given in \cite[Proposition 7.3.1.11]{HTT}. In fact, only the final step of our argument differs from the proof of \cite[7.3.1.11]{HTT}.
    
    Let \(\mathscr{L}\) denote the pushout \(\Op(S') \otimes_{\Op(S)}^{\lex} \Op(Y)\) in \(\Locale^{\L,\lex}\), which is equipped with the maps \(\psi_g\colon \Op(S') \to \mathscr{L}\), \(\psi_f\colon \Op(Y) \to \mathscr{L}\) exhibiting the cocartesian square. We let the element \(\psi_g(U')\cap \psi_f(V)\in \mathscr{L}\) be denoted by the symbol \(U'\otimes_S V\).
    Since \(\psi_g \circ g^\ast\) and \(\psi_f\circ f^\ast\) commute, we have \begin{equation*}
        (U'\cap g^{-1}(U)) \otimes_S V = U' \otimes_S (f^{-1}(U)\cap V).
    \end{equation*}
    Since it is the pushout in \(\Locale^{\L,\lex}\), every element of \(\mathscr{L}\) is of the form \(\bigcup_{\alpha} U'_\alpha \otimes_S V_\alpha\).

    Let \(\phi \colon \mathscr{L} \to \Op(S'\times_S Y)\) be the map in \(\Locale^{\L,\lex}\) provided by the universal property of \(\mathscr{L}\).
    We define a (left-exact but not necessarily colimit-preserving) functor \(\theta \colon \Op(S'\times_S Y) \to \mathscr{L}\) by the formula \[
    \theta(W) = \bigcup_{U'\times_S V \subset W} U' \otimes_S V.
    \] We can easily see that \(\phi \theta =\id\).
    Therefore, it suffices to show that \(\theta\phi\) is also the identity. That is, we wish to show that \[
    \bigcup_{U'\times_S V\subset \bigcup_\alpha U'_\alpha \times_S V_\alpha} U' \otimes_S V = \bigcup_{\alpha} U'_\alpha \otimes_S V_\alpha. 
    \] Here the right hand side is \(\le\) the left hand side by trivial reason. So we claim the converse inequality.

    Since \(f\colon Y \to S\) is locally universally closed, any open subset \(V\) can be written as \(\bigcup_\beta T_\beta^\intr\) where \(T_\beta\subset V\) and the map \(f\) restricts to a universally closed map \(T_\beta \to U_\beta\) for some open subset \(U_\beta\subset S\).
    Since \(U'\otimes_S V = \bigcup_\beta U'\otimes_S T_\beta^\intr\), it suffices to prove that \(U'\otimes_S T_\beta^\intr \subset \bigcup_\alpha U'_\alpha \otimes_S V_\alpha\) whenever \(U'\times_S T_\beta^\intr \subset \bigcup_\alpha U'_\alpha \times_S V_\alpha\).

    Fix \(\beta\).
    We may replace \(U'\) by \(U'\cap g^{-1}(U_\beta)\) because 
    \((U'\cap g^{-1}(U_\beta))\otimes_S T_\beta^\intr = U' \otimes_S (f^{-1}(U_\beta)\cap T_\beta^\intr)=U'\otimes_S T_\beta^\intr\). So we may assume that \(U'\subset g^{-1}(U_\beta)\).
    
    Fix a point \(x'\in U'\) and set \(x= g(x') \in U_\beta\).
    Since \(\{x'\}\times_S T_\beta = \{x'\} \times (f^{-1}(x)\cap T_\beta)\) is quasicompact, there exist finitely many \(\alpha_1,\dots,\alpha_n\) such that \(x'\in \bigcap_{i=1}^n U'_{\alpha_i}\) and \[
    f^{-1}(x) \cap T_\beta \subset V_{\alpha_1} \cup \cdots \cup V_{\alpha_n}.
    \] Since the map \(T_\beta \to U_\beta\) is closed, we can find an open neighborhood \(U_0\) of \(x\) in \(U\) such that \[
    f^{-1}(U_0) \cap T_\beta \subset V_{\alpha_1} \cup \cdots \cup V_{\alpha_n}.
    \] Let \(U'(x')\) denote the open neighborhood \(g^{-1}(U_0) \cap \bigcap_{i=1}^n U'_{\alpha_i}\) of \(x'\).
    Then we have \begin{align*}\textstyle
        U'(x') \otimes_S T_\beta^\intr &= \left(\bigcap U'_{\alpha_i}\right) \otimes_S \left(f^{-1}(U_0) \cap T_\beta^\intr\right) \\
        &\le \left(\bigcap U'_{\alpha_i}\right) \otimes_S \bigcup_{i} V_{\alpha_i} \\
        & = \bigcup_i \left({\textstyle\bigcap_j U'_{\alpha_j}}\right) \otimes_S V_{\alpha_i} \le \bigcup_i U'_{\alpha_i} \otimes_S V_{\alpha_i} \; \le \; \bigcup_\alpha U'_\alpha \otimes_S V_\alpha.
    \end{align*} Taking the colimit in \({x'\in U'}\), we get \(U' \otimes_S T_\beta^\intr \le \bigcup_\alpha U'_\alpha \otimes_S V_\alpha\) as desired.
\end{proof}

\subsection{Open/closed immersions}

\begin{theorem}[Open-closed recollement] \label{thm:OpenClosed}
    Let \(j\colon U \hookrightarrow Y\) be an open subspace and \(i\colon Z \hookrightarrow Y\) the closed complement. Then:
    \begin{enumerate}
        \item The functor \(j^\ast \colon \Sh(Y;\an) \to \Sh(U;\an)\) admits a left adjoint \(j_!\), which is fully faithful and satisfies the projection formula, i.e., the map \[j_!((-)\times_{j^\ast(-)} j^\ast(-)) \to j_!(-)\times_{(-)} (-)\] is an isomorphism.

        In particular, the functor \(j^\ast \colon \Sh(Y) \to \Sh(U)\) admits a fully faithful left adjoint whose oplax \(\Sh(Y)\)-linear structure is strongly \(\Sh(Y)\)-linear.

        \item The functor \(i_\ast \colon \Sh(Z;\an) \to \Sh(Y;\an)\) is fully faithful, preserves filtered colimits. 
        The functor \(i^\ast \colon \Sh(Y;\an) \to \Sh(Z;\an)\) exhibits \(\Sh(Z;\an)\) as a localization of \(\Sh(Y;\an)\) with respect to the maps of the form \(\varnothing \to j_!(-)\).
        
        In particular, the functor \(i_\ast \colon \Sh(Z) \hookrightarrow \Sh(Y)\), which is defined as the right adjoint to \(i^\ast\), admits a right adjoint \(i^! \colon \Sh(Y) \to \Sh(Z)\).

        \item In \(\Sh(Y)\), we have the following fiber sequences. \begin{gather*}
        j_! j^\ast \to \id \to i_\ast i^\ast \\
        i_\ast i^! \to \id \to j_\ast j^\ast
        \end{gather*} (In fact, each of the fiber sequences can be obtained as the adjoints to the other fiber sequence.)
        In other words, the null sequence 
        \[\Sh(U) \xrightarrow{j_!} \Sh(Y) \xrightarrow{i^\ast} \Sh(Z)\]
        is a bifiber sequence in \(\Pr^\L_\st\).
    \end{enumerate}
\end{theorem}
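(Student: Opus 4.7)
The plan is to establish the recollement at the Anima level via classical topos theory, pass to spectra using \cref{fact:Radjoftensor}, and finally construct $i^!$ and the second fiber sequence using stability of $\Sp$.

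For (1) and the first half of (2) at the Anima level, the open embedding $j$ exhibits $\Sh(U;\an)$ as the slice topos $\Sh(Y;\an)_{/U}$ (with $U$ a subterminal object), so $j_!$ is the forgetful from the slice, which is fully faithful, colimit-preserving, and satisfies the projection formula. Dually, the closed embedding $i$ realizes $\Sh(Z;\an)$ as the complementary closed subtopos, equivalently the Bousfield localization at $W\coloneqq\{\varnothing\to j_!(A)\}_{A\in\Sh(U;\an)}$, so $i_*$ is fully faithful and preserves filtered colimits (standard for closed subtopoi of $\infty$-topoi). Passing to spectra, \cref{fact:Radjoftensor} applied to the adjunction $j_!\dashv j^*$ in $\Pr^\L$ (it is in $\Pr^\L$ because $j^*$ has further right adjoint $j_*$) gives $j_!\otimes\id_{\Sp}\dashv j^*\otimes\id_{\Sp}$ with the left adjoint still fully faithful; the projection formula, being an isomorphism in the bimodule structure, survives tensoring. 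The Bousfield localization description likewise persists under tensoring, giving the reflective inclusion $i_*\colon\Sh(Z)\hookrightarrow\Sh(Y)$.

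The stable content is the construction of $i^!$ and the fiber sequences. For $j_!j^*\to\id\to i_*i^*$, let $C\coloneqq\cof(j_!j^*F\to F)$ denote the cofibre of the counit; $j^*C\simeq 0$ (from $j^*j_!\simeq\id$) and $i^*C\simeq i^*F$ (from $i^*j_!\simeq 0$, by disjointness $U\cap Z=\varnothing$), so the vanishing of $j^*C$ places $C$ in the essential image of $i_*$, whence $C\simeq i_*i^*F$. Dually, define $i^!F\coloneqq i^*\fib(F\to j_*j^*F)$; applying $j^*$ and using $j^*j_*\simeq\id$ shows the fibre lies in the essential image of $i_*$, yielding the second fiber sequence $i_*i^!\to\id\to j_*j^*$. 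The adjunction $i^!\dashv i_*$ follows by applying $\Map(-,i_*G)$ to this fiber sequence and using $j^*i_*\simeq 0$ (deduced from the first fiber sequence by setting $F=i_*G$ and invoking $i^*i_*\simeq\id$). The bifiber sequence in $\Pr^\L_\st$ then rephrases (3): $j_!$ identifies $\Sh(U)$ with $\ker(i^*)$ by the first fiber sequence, and $i^*$ realizes $\Sh(Z)$ as the Verdier quotient by $j_!(\Sh(U))$ by the universal property of the Bousfield localization. The main obstacle in the argument is the existence of $i^!$: at the Anima level $i_*$ is not colimit-preserving (it fails even on initial objects), so $i^!$ does not exist before stabilization; the spectrum-level construction crucially uses fibres of natural transformations, with adjointness resting on the orthogonality relations $j^*i_*\simeq 0$ and $i^*j_!\simeq 0$ extracted from the recollement itself.
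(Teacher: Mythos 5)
Your argument is correct and takes essentially the same route as the paper, which simply cites HTT \S6.3.5 and \S7.3.2 for the unstable claims and declares the stable ones formal consequences of \(\Sh(-)=\Sh(-;\an)\otimes\Sp\); you have just filled in those formal consequences (the cofiber/fiber construction of the recollement and of \(i^!\)) explicitly and correctly. One minor slip: to verify \(i_\ast\dashv i^!\) you should apply \(\Map(i_\ast G,-)\), not \(\Map(-,i_\ast G)\), to the fiber sequence \(i_\ast i^!F\to F\to j_\ast j^\ast F\), so that the third term \(\Map(i_\ast G,j_\ast j^\ast F)\simeq\Map(j^\ast i_\ast G,j^\ast F)\) vanishes and full faithfulness of \(i_\ast\) finishes the identification.
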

\begin{proof}
    The unstable claims are the combination of~\cite[§6.3.5, §7.3.2]{HTT}.
    The other stable claims are then formal consequences since \(\Sh(-) = \Sh(-;\an)\otimes \Sp\).
\end{proof}

From now on, we will always consider the categories of sheaves of spectra.
In particular, the functors such as \(f^\ast, f_\ast, j_!, i^!\) will always take values in the categories of sheaves of spectra.

\begin{proposition}
    [Smooth basechange] \label{prop:SmBC}
    Let \begin{equation*}
        \begin{tikzcd}
            V \ar[r, "j'"] \ar[d, "f'"'] \ar[dr, "\lrcorner", phantom, very near start] & Y \ar[d, "f"] \\
            U \ar[r, "j"] & S
        \end{tikzcd}
    \end{equation*} be a cartesian square of topological spaces in which \(j\) is an open embedding.
    Then the map (in \(\Sh(U)\)) \[
    j'_! f'^\ast \to f^\ast j_!
    \] is an isomorphism.
\end{proposition}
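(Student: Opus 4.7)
The plan is to derive the claim directly from the open--closed recollement of \cref{thm:OpenClosed}. Let $i\colon Z \hookrightarrow S$ denote the closed complement of $j$, and let $i'\colon W \hookrightarrow Y$ and $f''\colon W \to Z$ be the pullbacks along $f$, so $W = f^{-1}(Z)$ is the closed complement of $j'$. Write $\beta\colon j'_! f'^\ast \to f^\ast j_!$ for the Beck--Chevalley transformation; by definition $\beta$ is $(j'_! \dashv j'^\ast)$-adjoint to the canonical composite
\[
  f'^\ast \xrightarrow{\sim} f'^\ast j^\ast j_! \xrightarrow{\sim} j'^\ast f^\ast j_!,
\]
where the first arrow uses the equivalence $\id \simeq j^\ast j_!$ (full faithfulness of $j_!$) and the second is the strict commutativity of upper-star functors along the cartesian square.

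The core observation is that the bifiber sequence in \cref{thm:OpenClosed} makes the pair $(j'^\ast, i'^\ast)$ jointly conservative on $\Sh(Y)$. Hence it will be enough to check that both $j'^\ast(\beta)$ and $i'^\ast(\beta)$ are equivalences. To compute these, I will use the tautological identifications $j'^\ast f^\ast \simeq f'^\ast j^\ast$ and $i'^\ast f^\ast \simeq f''^\ast i^\ast$ (strict functoriality of upper-star) together with the recollement identities $j^\ast j_! \simeq \id$, $i^\ast j_! = 0$ on $S$ and the primed analogues on $Y$. Plugging in yields
\[
  i'^\ast f^\ast j_! \simeq f''^\ast i^\ast j_! = 0 = i'^\ast j'_! f'^\ast, \qquad j'^\ast f^\ast j_! \simeq f'^\ast j^\ast j_! \simeq f'^\ast \simeq j'^\ast j'_! f'^\ast,
\]
so $i'^\ast(\beta)$ is a map between zero objects and $j'^\ast(\beta)$ is a self-map of $f'^\ast$.

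To finish, I will verify by mate calculus that this self-map is in fact the identity; tracing through the adjunction that defines $\beta$, the composite equivalence displayed above is \emph{by construction} the map obtained by applying $j'^\ast$ to $\beta$ and then using the counit $j'^\ast j'_! \simeq \id$, which is exactly what is needed. I expect this last bookkeeping, though entirely formal, to be the one place requiring careful unwinding of unit--counit diagrams; the rest of the argument is a direct consequence of the recollement of \cref{thm:OpenClosed} together with the strict functoriality of $(-)^\ast$.
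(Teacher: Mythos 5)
Your argument is correct, but it takes a genuinely different route from the paper. The paper disposes of this proposition by citation: it invokes Volpe's Lemma 3.25, with the alternative of deducing the stable statement from the unstable basechange of \cite[Remark 6.3.5.8]{HTT} together with the observation that both $j'_!f'^\ast$ and $f^\ast j_!$ are left adjoints (so the unstable equivalence tensors up to $\Sp$). Your proof instead stays entirely inside the stable recollement of \cref{thm:OpenClosed}: joint conservativity of $(j'^\ast, i'^\ast)$ reduces the claim to the two computations $i^\ast j_! = 0$ and $j^\ast j_! \simeq \id$ (and their primed analogues), plus the strict compatibility $j'^\ast f^\ast \simeq f'^\ast j^\ast$ and $i'^\ast f^\ast \simeq f''^\ast i^\ast$ coming from $fj' = jf'$ and $fi' = if''$. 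The one step you flag as delicate — that $j'^\ast(\beta)$ is not merely a self-map of $f'^\ast$ but an equivalence — is indeed the crux, and your resolution is right: by definition of the Beck--Chevalley map as the $(j'_! \dashv j'^\ast)$-adjunct of the composite $f'^\ast \xrightarrow{\sim} f'^\ast j^\ast j_! \simeq j'^\ast f^\ast j_!$, the triangle identity gives $j'^\ast(\beta)\circ \eta' = (\text{that composite})$, where $\eta'\colon \id \to j'^\ast j'_!$ is the unit, invertible by full faithfulness of $j'_!$; hence $j'^\ast(\beta)$ is an equivalence. (Minor slip: you call $\eta'$ a ``counit''; for the adjunction $j'_! \dashv j'^\ast$ it is the unit that is invertible.) What each approach buys: the paper's route is shorter and delegates the content to existing literature, while yours is self-contained, uses only what \cref{thm:OpenClosed} already records, and makes transparent why the hypothesis that $j$ is \emph{open} (rather than an arbitrary locally proper map) is what drives the proof — everything reduces to the open--closed decomposition of $Y$ induced by pulling back that of $S$.
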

\begin{proof}
This is~\cite[Lemma 3.25]{Volpe}.
One can also prove it using the unstable version~\cite[Remark 6.3.5.8]{HTT} together with the observation that both sides are left adjoint functors.
\end{proof}

\subsection{Proper pushforward}

\begin{theorem}
    [Proper basechange] \label{thm:PBC}
    Let \begin{equation*}
        \begin{tikzcd}
            Y' \ar[r, "g'"] \ar[d, "p'"'] \ar[dr, "\lrcorner", phantom, very near start] & Y \ar[d, "p"] \\
            S' \ar[r, "g"] & S
        \end{tikzcd}
    \end{equation*} be a cartesian diagram of topological spaces in which \(p\) is proper.
    Then the map \[g^\ast p_\ast \to p'_\ast g'^\ast\] is an isomorphism. Moreover, the functor \(p_\ast\) preserves colimits and thus admits a right adjoint \(p^!\).
\end{theorem}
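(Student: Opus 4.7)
The plan is to establish the Beck--Chevalley isomorphism $g^\ast p_\ast \simeq p'_\ast g'^\ast$ first and then to deduce that $p_\ast$ preserves colimits. The map in question is the mate of the canonical isomorphism $g'^\ast p^\ast \simeq p'^\ast g^\ast$, which is automatic since both sides equal $(p\circ g')^\ast = (g\circ p')^\ast$.

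For the base change isomorphism, I would reduce to the analogous statement for sheaves of anima. Since $\Sh(-;\Sp) = \Sh(-;\an) \otimes \Sp$, the pullback $(-)^\ast$ on spectra is obtained from the unstable one by tensoring with $\id_\Sp$, hence commutes with each levelwise functor $\Omega^\oo\Sigma^n\colon \Sh(-;\Sp) \to \Sh(-;\an)_\ast$ for $n \in \Z$; the pushforwards $(-)_\ast$ also commute with these, being right adjoints in both settings. Joint conservativity of the family $\{\Omega^\oo\Sigma^n\}_{n\in\Z}$ on $\Sh(-;\Sp)$ (detecting homotopy sheaves, and using sobriety of Hausdorff $S$) then reduces the check to the unstable setting, where it is Lurie's proper base change theorem for $\infty$-topoi (\cite[\S7.3]{HTT}) applied to the proper geometric morphism $\Sh(Y;\an) \to \Sh(S;\an)$ induced by the proper continuous map $p$.

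Having established base change, to prove $p_\ast$ preserves colimits it suffices to prove preservation of filtered colimits (finite colimits being automatic for right adjoints between stable categories). Base change applied to the point inclusions $i_s\colon \{s\} \hookrightarrow S$ gives $i_s^\ast p_\ast F \simeq \Gamma(Y_s; F|_{Y_s})$, with $Y_s = p^{-1}(s)$ compact Hausdorff as the fiber of a proper map between Hausdorff spaces. As stalks at points are jointly conservative on $\Sh(S)$ (sober $S$), the claim reduces to the standard fact that global sections on a compact Hausdorff space commute with filtered colimits of sheaves of spectra. The existence of $p^!$ is then immediate from the adjoint functor theorem in $\Pr^\L$.

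The main obstacle, I expect, will be the transfer between unstable and stable proper base change: the joint conservativity of $\{\Omega^\oo\Sigma^n\}_{n \in \Z}$ on sheaves of spectra, together with the use of stalks at points on general (non-locally-compact) topological spaces, both require care in the non-hypercomplete sheaf setting. A perhaps more self-contained alternative, mimicking \cite{Volpe}, would work directly from the K\"unneth equivalence \cref{lem:locproperpullback}, combined with a proof of the projection formula and colimit preservation for $p_\ast$ that exploits the pushout structure $\Sh(Y') \simeq \Sh(S') \otimes_{\Sh(S)} \Sh(Y)$, bypassing the unstable reduction entirely.
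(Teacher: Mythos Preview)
Your reduction to the unstable proper base change is the right idea and parallels the paper's approach, but there is a genuine gap in your main argument. Even granting that the proper map $p$ induces a proper geometric morphism $\Sh(Y;\an)\to\Sh(S;\an)$ (\cite[\S7.3]{HTT} or \cite{Proper}), the $\infty$-topos-level statement only furnishes base change for squares that are cartesian \emph{in $\Top_\infty$}. You must therefore verify that the pullback square of topological spaces becomes cartesian after applying $\Sh(-;\an)$, i.e.\ that $\Sh(Y';\an)\simeq\Sh(S';\an)\times_{\Sh(S;\an)}\Sh(Y;\an)$ in $\Top_\infty$. This is exactly \cref{lem:locproperpullback}, applied to the proper (hence locally universally closed) map $p$, and the paper singles out precisely this point as the substantive content of its proof. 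You mention the K\"unneth equivalence only at the end as an ``alternative'' for a different concern, not realizing it is already required for your primary argument to go through.

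Your argument for colimit preservation via stalks has the issue you yourself flag: joint conservativity of point-stalks on $\Sh(S;\Sp)$ is not available without hypercompleteness, and the paper works with non-hypercomplete sheaves throughout (indeed, the base spaces of interest include infinite-dimensional Banach spaces). The paper sidesteps this entirely by invoking \cite[Proposition~3.8]{NonabPBC}, which transfers both the base change isomorphism and the colimit preservation of $p_\ast$ from anima-valued to $\Sp$-valued sheaves in one stroke, with no recourse to stalks. Your conservativity of $\{\Omega^\infty\Sigma^n\}$ is fine---these are jointly conservative on any stabilization---but it does not rescue the stalk step.
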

\begin{proof}
    Combine \cite{Proper} and \cite[Proposition 3.8]{NonabPBC}.
    We note that although the notion of proper morphisms of \(\oo\)-topoi is defined in terms of pullbacks in \(\Top_\oo\), the pullbacks of \(0\)-localic \(\oo\)-topoi are given as the pushouts in \(\Locale^{\L,\lex}\), so that we have the correct cartesian square of the associated \(\oo\)-topoi due to \cref{lem:locproperpullback}:  \qedhere
    \begin{equation*}
        \begin{tikzcd}
            \Sh(Y';\an) \ar[r, "g'_\ast"] \ar[d, "p'_\ast"'] \ar[dr, "\lrcorner", phantom, very near start] & \Sh(Y;\an) \ar[d, "p_\ast"] & \mbox{} \ar[d, phantom, "\in \Top_\oo"] \\
            \Sh(S';\an) \ar[r, "g_\ast"] & \Sh(S;\an) & \mbox{}
        \end{tikzcd}
    \end{equation*}
\end{proof}

\begin{lemma}\label{lem:jshriekp'star}
    Let \begin{equation*}
        \begin{tikzcd}
            U' \ar[r, "j'"] \ar[d, "p'"'] \ar[dr, "\lrcorner", phantom, very near start] & Y \ar[d, "p"] \\
            U \ar[r, "j"] & S
        \end{tikzcd}
    \end{equation*} be a cartesian square where \(j\) is an open embedding and \(p\) is proper.
    Then the map \[
    j_! p'_\ast \to p_\ast j'_!
    \] is an isomorphism
\end{lemma}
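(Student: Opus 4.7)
The plan is to verify that $p_\ast j'_! C$ lies in the essential image of $j_!$ (which, by the recollement of \cref{thm:OpenClosed}, is the full subcategory of $\Sh(S)$ characterized by vanishing under $i^\ast$ for $i\colon Z\hookrightarrow S$ the closed complement of $j$), and then to identify its restriction along $j^\ast$ with $p'_\ast C$. Combined, this will yield the desired equivalence, after which one must match it with the canonical Beck--Chevalley map.

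First, I would apply the recollement fiber sequence $j_! j^\ast \to \id \to i_\ast i^\ast$ on $\Sh(S)$ to the object $p_\ast j'_! C$ for an arbitrary $C \in \Sh(U')$. Since the given square is cartesian and $j$ is an open embedding, its closed complement pulls back to the closed complement $i'\colon Z' \hookrightarrow Y$ of $j'$, and $p$ restricts to a proper map $p''\colon Z' \to Z$. Applying proper basechange (\cref{thm:PBC}) to the square
\[
\begin{tikzcd}
Z' \ar[r, "i'"] \ar[d, "p''"'] \ar[dr, phantom, very near start, "\lrcorner"] & Y \ar[d, "p"] \\
Z \ar[r, "i"'] & S
\end{tikzcd}
\]
gives $i^\ast p_\ast j'_! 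C \simeq p''_\ast i'^\ast j'_! C$; and by the recollement orthogonality $i'^\ast j'_! = 0$, this vanishes. Similarly, proper basechange along $j$ combined with the full faithfulness of $j'_!$ gives $j^\ast p_\ast j'_! C \simeq p'_\ast j'^\ast j'_! C \simeq p'_\ast C$. The fiber sequence therefore degenerates to an equivalence $j_! p'_\ast C \xrightarrow{\sim} p_\ast j'_! C$.

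The main obstacle is checking that the equivalence so constructed coincides with the canonical Beck--Chevalley comparison map $j_! p'_\ast \to p_\ast j'_!$, which is defined as the composite
\[
j_! p'_\ast \xrightarrow{\mathrm{unit}} j_! p'_\ast j'^\ast j'_! \xrightarrow{\sim} j_! j^\ast p_\ast j'_! \xrightarrow{\mathrm{counit}} p_\ast j'_!
\]
(using the unit of $j'_! \dashv j'^\ast$, the proper basechange isomorphism $p'_\ast j'^\ast \simeq j^\ast p_\ast$, and the counit of $j_! \dashv j^\ast$). This amounts to a naturality diagram chase: the equivalence we produced is obtained by applying $j_! j^\ast$ to $p_\ast j'_!$ and composing with the counit $j_! j^\ast \Rightarrow \id$, which is exactly the recollement presentation of any object supported on $U$. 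Matching this with the above composite reduces to the compatibility of the proper basechange isomorphism with the unit of $j'_! \dashv j'^\ast$, which is formal.
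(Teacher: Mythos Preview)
Your argument is correct and uses the same ingredients as the paper's proof: the closed complement $i\colon Z\hookrightarrow S$, proper basechange, the orthogonality $i'^\ast j'_! = 0$, and full faithfulness of $j'_!$. The paper, however, organizes these more economically: rather than first constructing an equivalence and then matching it with the Beck--Chevalley map, it works directly with the canonical map $j_! p'_\ast \to p_\ast j'_!$ and checks it becomes an isomorphism after applying $j^\ast$ and $i^\ast$ separately, invoking joint conservativity of that pair. Under $i^\ast$ both sides vanish; under $j^\ast$ the map identifies (via proper basechange) with the unit $p'_\ast \to p'_\ast j'^\ast j'_!$, an isomorphism. This bypasses your final matching step entirely, which---while formal, as you say---is unnecessary once one commits to testing the given map rather than building a parallel one.
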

\begin{proof}
    Let \(i\colon Z \to S\) be the closed complement to the map \(j\).
    Since \(j^\ast\) and \(i^\ast\) are jointly conservative, it suffices to show that the two maps
    \begin{align*}
    i^\ast j_! p'_\ast \to i^\ast p_\ast j'_! && p'_\ast \to j^\ast p_\ast j'_!
    \end{align*}
    are isomorphisms.
    First, using the proper basechange and observing that \(j^{-1}(Z)=\varnothing\), we immediately see that the both sides of the first map are zero.
    Next, under the proper basechange \(j^\ast p_\ast \simeq p'_\ast j'^\ast\), the second map is equivalent to the unit map \(p'_\ast \to p'_\ast j'^\ast j'_!\), which is an isomorphism since \(j'_!\) is fully faithful.
\end{proof}

\begin{lemma}
    [Proper projection formula] \label{lem:ProperProjFormula}
    Let \(p\colon Y \to S\) be a proper map.
    Then the map \[
    p_\ast(-) \otimes (-) \to p_\ast(-\otimes p^\ast(-))
    \] is an isomorphism.
\end{lemma}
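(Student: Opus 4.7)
My plan is to reduce to the case where $G$ has the form $j_!(\unit_U)$ for $j\colon U\hookrightarrow S$ an open embedding, then assemble the projection formula for $j_!$ (which is part of \cref{thm:OpenClosed}) with the basechange theorems already proved.

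First, I would fix $F\in\Sh(Y)$ and regard both sides as functors in $G\in\Sh(S)$. The left hand side $p_\ast F\otimes(-)$ is colimit-preserving by bilinearity of the tensor product. The right hand side $p_\ast(F\otimes p^\ast(-))$ is also colimit-preserving: $p^\ast$ is a left adjoint, tensoring is bilinear, and $p_\ast$ preserves colimits by \cref{thm:PBC} since $p$ is proper. Since $\Sh(S)$ is generated under colimits and desuspensions by sheaves of the form $j_!(\unit_U)$ with $j\colon U\hookrightarrow S$ open, it suffices to establish the assertion when $G=j_!(\unit)$.

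Next, I form the pullback
\[
\begin{tikzcd}
U' \ar[r, "j'"] \ar[d, "p'"'] \ar[dr, phantom, "\lrcorner", very near start] & Y \ar[d, "p"] \\
U \ar[r, "j"] & S.
\end{tikzcd}
\]
The open projection formula from \cref{thm:OpenClosed} rewrites $p_\ast F\otimes j_!(\unit)\simeq j_!\bigl(j^\ast p_\ast F\bigr)$, and proper basechange (\cref{thm:PBC}) further identifies this with $j_!\bigl(p'_\ast j'^\ast F\bigr)$. In parallel, smooth basechange (\cref{prop:SmBC}) gives $p^\ast j_!(\unit)\simeq j'_!(\unit)$, so that
\[
p_\ast\bigl(F\otimes p^\ast j_!(\unit)\bigr)\simeq p_\ast\bigl(F\otimes j'_!(\unit)\bigr)\simeq p_\ast j'_!\bigl(j'^\ast F\bigr)\simeq j_!\bigl(p'_\ast j'^\ast F\bigr),
\]
where the second isomorphism is the open projection formula applied to $j'$, and the last step uses \cref{lem:jshriekp'star}.

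The main obstacle is the coherence check: one must verify that the composite of the above canonical isomorphisms coincides with \emph{the} canonical projection formula map, rather than some other automorphism of $j_!(p'_\ast j'^\ast F)$. This is a diagram chase: transposing the projection formula map across the adjunction $p^\ast\dashv p_\ast$ reduces it to the counit $p^\ast p_\ast F\otimes p^\ast G\to F\otimes p^\ast G$ tensored with $p^\ast G$, and each of the isomorphisms invoked (open projection formula, smooth basechange, proper basechange, \cref{lem:jshriekp'star}) is by construction compatible with this adjoint description, so the desired identification follows by naturality.
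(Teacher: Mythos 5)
Your proposal is correct and follows essentially the same route as the paper: reduce to generators $j_!(\unit)$, then combine the open projection formula, smooth basechange, proper basechange, and \cref{lem:jshriekp'star} to identify the map with the (invertible) projection formula for the open immersion $j$. Your explicit attention to the coherence of the composite identifications is a point the paper treats more tersely, but the substance is identical.
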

\begin{proof}
    Since both sides commute with colimits and finite limits in each variable, and since \(\Sh(S)\) is generated under colimits and finite limits by the objects of the form \(j_!(\unit)\) for \(j\colon U\hookrightarrow S\) open, it suffices to show that the map
    \[
    p_\ast(-)\otimes j_!(\unit) \to p_\ast(- \otimes p^\ast j_!(\unit))
    \]
    is an isomorphism. Consider the cartesian square \(\begin{tikzcd}[row sep=small, column sep=small]
        p^{-1}U \ar[r, "j'"] \ar[d, "p'"'] & Y \ar[d, "p"] \\
        U \ar[r, "j"'] & S
    \end{tikzcd}\). Then under the smooth basechange \(p^\ast j_! \simeq j'_! p'^\ast\), the smooth projection formula for \(j'_!\), the isomorphism \(p_\ast j'_! \simeq j_! p'_\ast\) in \cref{lem:jshriekp'star} and the proper basechange \(p'_\ast j'^\ast \simeq j^\ast p_\ast\), the map in question takes the form \[
    p_\ast(-) \otimes j_!(\unit) \to j_! j^\ast p_\ast,
    \] which is exactly the isomorphism given by the smooth projection formula for \(j\).
\end{proof}

\subsection{Shriek functors}

\begin{theorem}\label{thm:top6FF}
    Let \(E\) denote the class of separated locally proper maps.
    Then the functor \[(\Sh(-),(-)^\ast) \colon \Top^\op \to \CAlg(\Pr^\L_\st)\] extends to a six-functor formalism, i.e., a lax symmetric monoidal functor\footnote{Here we used the notations from~\cite[A.5.2, A.5.4]{Mann}.} of the form
    \begin{equation*}
        \Sh(-) \colon \Corr(\Top)_{E,\textit{all}} \longrightarrow \Pr^\L_\st
    \end{equation*}
    such that for any locally proper map \(f\) together with a decomposition \(f=pj\) for \(j\) an open embedding and \(p\) a proper map, the functor \(f_!\) is identified with \(p_\ast j_!\).
\end{theorem}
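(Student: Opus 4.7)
The plan is to feed the preceding lemmas into the Liu--Zheng/Mann machinery for constructing six-functor formalisms, in the form packaged as~\cite[Proposition A.5.10]{Mann}. The inputs required by that machinery are: the $\ast$-pullback functor $\Sh(-)\colon \Top^\op \to \CAlg(\Pr^\L_\st)$ already in hand; two distinguished subclasses $I, P \subset E$ consisting of open embeddings and proper maps respectively, whose compositions $P \circ I$ exhaust $E$; partial adjoints $j_! \dashv j^\ast$ for $j \in I$ and $p^\ast \dashv p_\ast$ for $p \in P$; and a list of basechange and projection-formula isomorphisms for these partial adjoints.

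First I would verify the combinatorial preliminaries: both $I$ and $P$ are stable under composition and pullback and contain all isomorphisms, and the fiberwise compactification of \cref{lem:fiberwisecompactification} exhibits each separated locally proper map $f\colon Y \to S$ as a composite $f = pj$ with $j \in I$ and $p \in P$. Then I would collect the adjoint data: \cref{thm:OpenClosed} provides the fully faithful $j_!$ together with its smooth projection formula (after tensoring with $\Sp$), and \cref{thm:PBC} provides the colimit-preserving $p_\ast$ together with its right adjoint $p^!$. Finally I would verify the basechange axioms: \cref{prop:SmBC} (smooth basechange) and \cref{thm:PBC} (proper basechange) for the two homogeneous types of squares, the proper projection formula of \cref{lem:ProperProjFormula}, and the mixed Beck--Chevalley isomorphism $j_! p'_\ast \simeq p_\ast j'_!$ of \cref{lem:jshriekp'star}. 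This last identity is what ensures that two different $(I,P)$-factorizations of the same $f$ produce canonically isomorphic functors $p_\ast j_!$.

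With these inputs recorded, the Liu--Zheng/Mann assembly is formal. One first builds lax symmetric monoidal extensions $\Corr(\Top)_{I,\textit{all}} \to \Pr^\L_\st$ and $\Corr(\Top)_{P,\textit{all}} \to \Pr^\L_\st$ of $\Sh$ using the partial adjoints and their projection formulas, and then glues them into a single extension to $\Corr(\Top)_{E,\textit{all}}$ via the mixed basechange of \cref{lem:jshriekp'star} together with the factorization of \cref{lem:fiberwisecompactification}. On a factorization $f = pj$, the construction produces $f_! \simeq p_\ast j_!$ by definition, and the lax symmetric monoidality is induced by the smooth and proper projection formulas.

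The main obstacle is bookkeeping coherence inside the bisimplicial model that Liu--Zheng use; concretely, all the higher $2$-cells of $\Corr(\Top)_{E,\textit{all}}$ must be provided consistently so that the symmetric monoidal structure really lifts and so that distinct compactifications are identified coherently. The only genuinely non-classical piece of coherence data beyond the standard smooth and proper basechange is \cref{lem:jshriekp'star}, which is precisely what is needed to prove independence from the chosen compactification. Once one trusts Mann's general framework, no further work beyond verifying the listed inputs is required, and the final identification of $f_!$ with $p_\ast j_!$ is tautological from the construction.
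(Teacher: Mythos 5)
Your proposal is correct and follows exactly the paper's argument: the proof consists of invoking~\cite[Proposition A.5.10]{Mann} after checking its hypotheses via \cref{lem:fiberwisecompactification}, \cref{thm:OpenClosed}, \cref{prop:SmBC}, \cref{thm:PBC}, \cref{lem:jshriekp'star} and \cref{lem:ProperProjFormula}. Your additional commentary on the role of the mixed Beck--Chevalley isomorphism in identifying different compactifications is accurate but not needed beyond what Mann's framework already packages.
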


The right adjoint to a functor of the form \(f_!\) will be denoted by \(f^!\).

\begin{proof}
    We provide such a functor by means of~\cite[Proposition A.5.10]{Mann}.
    Combining \cref{lem:fiberwisecompactification}, \cref{thm:OpenClosed}, \cref{prop:SmBC}, \cref{thm:PBC}, \cref{lem:jshriekp'star} and \cref{lem:ProperProjFormula}, we see the assumptions of loc.~cit.~are all satisfied. This completes the proof.
\end{proof}

\begin{corollary}
    [{\cite[Proposition A.5.8]{Mann}}]
    For a separated locally proper map \(f\), we have the projection formula: \[
    f_! \text{ is } f^\ast\text{-linear,}
    \] the basechange theorem \[
    g^\ast f_! \simeq f'_! g'^\ast,
    \] and the functoriality \[
    f_1 {}_! f_0{}_! \simeq (f_1 f_0)_!.
    \]
\end{corollary}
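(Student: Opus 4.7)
The plan is to deduce all three properties formally from the structure of $\Sh(-)$ as a lax symmetric monoidal functor on $\Corr(\Top)_{E,\mathit{all}}$ provided by \cref{thm:top6FF}. As the citation to~\cite[Proposition A.5.8]{Mann} indicates, this is by now a standard piece of the formal theory of six-functor formalisms, and essentially no substantial new work is required; one reads off each property from the corresponding piece of data in the correspondence framework.

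For the functoriality $(f_1 f_0)_! \simeq f_{1!} f_{0!}$, observe that the composition of the two right-pointing morphisms $L_0 \xrightarrow{f_0} L_1$ and $L_1 \xrightarrow{f_1} L_2$ in $\Corr(\Top)_{E,\mathit{all}}$ is by definition the right-pointing morphism associated to $f_1 f_0$, so ordinary functoriality of $\Sh(-)$ on the correspondence category produces the identification. The basechange isomorphism $g^\ast f_! \simeq f'_! g'^\ast$ is obtained along the same lines: the correspondence $S \xleftarrow{f'} Y' \xrightarrow{g'} Y$ admits two factorizations in $\Corr(\Top)_{E,\mathit{all}}$, either as the composite of the backward $g$ followed by the forward $f$ (through $S'$) or as the forward $f'$ followed by the backward $g'$ (through $Y$). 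Applying $\Sh(-)$ and comparing the two factorizations gives the Beck--Chevalley isomorphism.

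Finally, the $f^\ast$-linearity of $f_!$ is encoded in the lax symmetric monoidal structure. For the elementary forward correspondence representing $f_!$, tensoring with the identity correspondence on the target endows $f_!$ with a natural $\Sh(X)$-linear structure with respect to the $f^\ast$-action, and its invertibility is the projection formula. This is precisely what~\cite[Proposition A.5.8]{Mann} extracts from the monoidal functor produced by the construction of~\cite[Proposition A.5.10]{Mann} which we invoked in the proof of \cref{thm:top6FF}.

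There is no real obstacle at this point, since all of the delicate topological content (existence of fiberwise compactifications, smooth and proper basechange, the proper projection formula, open-closed recollement) has already been absorbed into \cref{thm:top6FF}; what remains here is a purely formal manipulation within the $\infty$-categorical correspondence framework.
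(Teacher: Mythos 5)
Your proposal is correct and takes the same route as the paper: the paper offers no separate argument but simply cites \cite[Proposition A.5.8]{Mann}, i.e.\ all three properties are extracted formally from the lax symmetric monoidal functor on \(\Corr(\Top)_{E,\mathit{all}}\) produced in \cref{thm:top6FF}, exactly as you describe. Your unpacking of how each property is read off from composition of spans and the monoidal structure is a faithful elaboration of that citation (modulo a harmless mislabeling of \(S\) versus \(S'\) in the basechange span).
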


\subsection{Homotopy and Monodromy}

\begin{theorem}[\(\AA^1\)-invariance]
    \label{thm:A1-inv}
    For any topological space \(S\),
    the map \(\pr\colon S \times [0,1] \to S\) induces a fully faithful functor
    \[\Sh(S) \xrightarrow[\pr^\ast]{} \Sh(S\times [0,1]).\]
\end{theorem}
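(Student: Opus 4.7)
The plan is to prove that the unit map $\eta\colon \id_{\Sh(S)} \to \pr_\ast \pr^\ast$ is a natural isomorphism, which is equivalent to full faithfulness of $\pr^\ast$. Since $[0,1]$ is compact Hausdorff, $\pr$ is proper, so the proper projection formula (\cref{lem:ProperProjFormula}) yields a natural identification
\[
\pr_\ast \pr^\ast \mathcal{F} \;\simeq\; \mathcal{F} \otimes \pr_\ast \unit_{S \times [0,1]}
\]
under which $\eta_\mathcal{F}$ corresponds to $\mathrm{id}_\mathcal{F} \otimes \eta_{\unit_S}$. Hence it suffices to prove that the unit map $\unit_S \to \pr_\ast \unit_{S \times [0,1]}$ is an isomorphism in $\Sh(S)$.

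Next, I apply proper basechange (\cref{thm:PBC}) to the cartesian square
\[
\begin{tikzcd}[row sep = small]
S \times [0,1] \ar[r, "q"] \ar[d, "\pr"'] \ar[dr, phantom, very near start, "\lrcorner"] & {[0,1]} \ar[d, "\pi"] \\
S \ar[r, "\sigma"'] & \pt
\end{tikzcd}
\]
in which $\pi$ is proper. This furnishes a canonical isomorphism $\pr_\ast \unit_{S \times [0,1]} = \pr_\ast q^\ast \unit_{[0,1]} \simeq \sigma^\ast \pi_\ast \unit_{[0,1]}$, naturally identifying the unit map in question with $\sigma^\ast$ applied to the unit $\unit \to \pi_\ast \unit_{[0,1]}$ in $\Sp$. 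The problem thereby reduces to the case $S = \pt$: showing that the global sections $\pi_\ast \unit_{[0,1]}$ is the sphere spectrum $\unit$.

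The content of this remaining step is that the global sections of the constant sphere-spectrum sheaf on $[0,1]$ is $\S$. This follows from the classical fact that $[0,1]$ is a contractible CW complex, so that the associated $\infty$-topos $\Sh([0,1]; \an)$ has a point as its shape; equivalently, the constant sheaf functor $\pi^\ast\colon \Sp \to \Sh([0,1]; \Sp)$ is fully faithful. The main obstacle is precisely this last step: although the statement is classical, its rigorous justification requires an input external to the formalism of the appendix, namely a comparison between the topos-theoretic shape of $\Sh([0,1]; \an)$ and the classical homotopy type of $[0,1]$ (for instance via the identification of sheaves of spaces on a locally of singular shape space $X$ with $\Fun(\Sing(X), \an)$, which applies to CW complexes and in particular to $[0,1]$). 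An internal alternative is to cover $[0,1]$ by two overlapping contractible open intervals and iterate hyperdescent, but this merely restates the same input within the formalism.
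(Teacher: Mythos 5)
Your argument is correct in its reductions but takes a genuinely different route from the paper. The paper simply cites the unstable statement~\cite[Lemma A.2.9]{HA} (full faithfulness of \(\pr^\ast\) on sheaves of \emph{spaces}, for arbitrary \(S\)) and then passes to spectra using the existence of the left adjoint \(\pr_\sharp\) together with \cref{fact:Radjoftensor}. You instead exploit the six-functor formalism: properness of \(\pr\) plus the projection formula (\cref{lem:ProperProjFormula}) reduces full faithfulness to the single unit map \(\unit_S \to \pr_\ast\unit_{S\times[0,1]}\), and proper basechange (\cref{thm:PBC}) reduces that to the absolute computation \(\pi_\ast\unit_{[0,1]}\simeq\S\). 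Both reductions are sound, and neither \cref{thm:PBC} nor \cref{lem:ProperProjFormula} depends on \(\AA^1\)-invariance, so there is no circularity. What your route buys is that the only external input needed is the triviality of the shape of \([0,1]\), rather than the full relative statement over an arbitrary base; what it costs is that this last input still has to come from outside the appendix, exactly as you say.

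Two points remain to be filled in at the final step. First, triviality of the shape of \(\Sh([0,1];\an)\) via \cref{thm:monodromy} requires checking its hypotheses for \([0,1]\) (locally contractible is clear; hypercompleteness follows from finite covering dimension). Second, trivial shape directly gives full faithfulness of \(\pi^\ast\colon\an\to\Sh([0,1];\an)\), i.e.\ the \emph{unstable} statement; to conclude \(\pi_\ast\unit_{[0,1]}\simeq\S\) in \(\Sh([0,1];\Sp)=\Sh([0,1];\an)\otimes\Sp\) you still need to tensor the adjunction up to spectra, which is legitimate here because \(\pi_\ast\) preserves colimits (\(\pi\) is proper, \cref{thm:PBC}) so that \cref{fact:Radjoftensor} applies. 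This is the same tensoring step the paper uses; once you include it, your proof is complete.
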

\begin{proof}
    This is a variant of~\cite[Lemma A.2.9]{HA}.
    Although it proves the unstable version, since the functor \(\pr^\ast \colon \Sh(S\times[0,1];\an) \to \Sh(S;\an)\) admits a left adjoint \(\pr_\sharp\), it follows that \(\pr^\ast \colon \Sh(S)\to \Sh(S\times[0,1])\) is fully faithful as well.
\end{proof}	

\begin{corollary}
    Let two map \(f\) and \(g\) from \(p\) to \(q\) be homotopic over the base \(S\) as in the following diagram.
    \begin{equation*}
        \begin{tikzcd}
            & X \ar[dl, "\id"', end anchor=north east] \ar[d, "i_0"] \ar[rrd, "f", end anchor=north west] && \\
            X \ar[ddrr, "p"', bend right, start anchor=south] & \ar[l, "\pr"{description}] X\times [0,1] \ar[rr, "h"{description}] && Y \ar[ddl, "q", bend left] \\
            & X \ar[ul, "\id"{description}, end anchor=south east] \ar[u, "i_1"'] \ar[rru, "g"', end anchor=south west] && \\
            && S &
        \end{tikzcd}
    \end{equation*}
    Then we have the following diagram consisting of adjunction units.
    \[\begin{tikzcd}[row sep = small]
        & p_\ast\pr_\ast i_0{}_\ast i_0^\ast \pr^\ast p^\ast = q_\ast f_\ast f^\ast q^\ast & \\
        p_\ast p^\ast \ar[ru, equal, end anchor = west] \ar[rd, equal, end anchor=west] \ar[r, "\sim"] & p_\ast \pr_\ast \pr^\ast p^\ast \ar[u, shift left=1.3em] \ar[d, shift right = 1.3em] & \ar[l] q_\ast q^\ast \ar[lu, end anchor = south east] \ar[ld, end anchor = north east] \\
        & p_\ast \pr_\ast i_1{}_\ast i_1^\ast \pr^\ast p^\ast = q_\ast g_\ast g^\ast q^\ast &
    \end{tikzcd}\]
    The diagram says that the two maps \(f^\ast\) and \(g^\ast\) on the cohomology (relative to \(S\)) \[f^\ast , g^\ast \colon \H^{-\bullet}(Y/S;-) \rightrightarrows \H^{-\bullet}(X/S;-)\] coincide (witnessed by the above homotopy diagram).
\end{corollary}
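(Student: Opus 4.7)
The proof rests on two ingredients: the \(\AA^1\)-invariance from \cref{thm:A1-inv}, which ensures the unit \(\eta_{\pr}\colon \id \to \pr_\ast \pr^\ast\) is an isomorphism, and the general principle that the unit of a composite adjunction decomposes as the whiskered composition of the units of the two constituent adjunctions.

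First I would record the geometric identifications built into the diagram. The equalities \(\pr \circ i_t = \id_X\) (for \(t\in\{0,1\}\)) give natural isomorphisms \(\pr_\ast i_t{}_\ast \simeq \id\) and \(i_t^\ast \pr^\ast \simeq \id\); together with \(h\circ i_0 = f\) and \(h\circ i_1 = g\) these give the equalities \(p_\ast \pr_\ast i_0{}_\ast i_0^\ast \pr^\ast p^\ast = q_\ast f_\ast f^\ast q^\ast\) (and similarly for \(i_1, g\)) labelling the upper and lower corners of the diagram. Further, since \(h\) is a homotopy \emph{over} \(S\), i.e.\ \(qh = p\pr\), one has \(h^\ast q^\ast \simeq \pr^\ast p^\ast\) and \(p_\ast \pr_\ast \simeq q_\ast h_\ast\), which together yield \(q_\ast h_\ast h^\ast q^\ast = p_\ast \pr_\ast \pr^\ast p^\ast\) in the middle node, and identify the arrow from \(q_\ast q^\ast\) into this node with \(q_\ast \eta_h q^\ast\).

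Next I would exploit the compatibility of units under composition of adjunctions. Since \(\pr i_t = \id_X\) and the unit of the identity adjunction is the identity, the whiskered composition
\[
    \bigl(\pr_\ast \eta_{i_t} \pr^\ast\bigr) \circ \eta_\pr = \eta_{\pr i_t} = \id
\]
(after the natural identifications \(\pr_\ast i_t{}_\ast i_t^\ast \pr^\ast \simeq \id\) from the previous paragraph). Since \(\eta_\pr\) is invertible by \cref{thm:A1-inv}, it follows that \(\pr_\ast \eta_{i_t}\pr^\ast\) is an isomorphism, equal to \(\eta_\pr^{-1}\); in particular the maps corresponding to \(t=0\) and \(t=1\) agree. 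This is exactly the content of the left-hand triangles of the displayed diagram. Applying the same compatibility principle to the factorization \(f = h \circ i_0\), the pullback map \(f^\ast = q_\ast \eta_f q^\ast\) factors as
\[
    q_\ast q^\ast \xrightarrow{q_\ast \eta_h q^\ast} p_\ast \pr_\ast \pr^\ast p^\ast \xrightarrow{p_\ast \pr_\ast \eta_{i_0}\pr^\ast p^\ast} p_\ast p^\ast,
\]
and analogously for \(g^\ast\) with \(i_1\) in place of \(i_0\); these are the right-hand triangles.

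Combining the two factorizations, both \(f^\ast\) and \(g^\ast\) equal \((p_\ast \eta_\pr p^\ast)^{-1}\circ q_\ast \eta_h q^\ast\), so they coincide. I do not anticipate any real obstacle beyond the careful bookkeeping of the natural isomorphisms identifying composites of upper/lower-star functors with the single upper/lower-stars along the composite maps; the sole nontrivial input is the \(\AA^1\)-invariance of \cref{thm:A1-inv}.
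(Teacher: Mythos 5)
Your argument is correct and is exactly the intended one: the paper offers no written proof, treating the corollary as immediate from \cref{thm:A1-inv} together with the standard compatibility of adjunction units under composition of the maps \(\pr\circ i_t=\id\), \(h\circ i_t\in\{f,g\}\) and \(q\circ h=p\circ\pr\), which is precisely the bookkeeping you carry out. The key step — that \(p_\ast\pr_\ast\eta_{i_0}\pr^\ast p^\ast\) and \(p_\ast\pr_\ast\eta_{i_1}\pr^\ast p^\ast\) are both inverse to the isomorphism \(p_\ast\eta_{\pr}p^\ast\), so that \(f^\ast\) and \(g^\ast\) share the factorization through \(q_\ast\eta_h q^\ast\) — is stated correctly and suffices.
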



\begin{definition}\label{dfn:contractible}
    We say a map \(q\colon Y \to S\) is \emph{contractible} if there exist a section \(e \colon S \to Y\) and a homotopy \(eq\sim \id_Y\) over \(S\).
\end{definition}

Note that if \(q\) is contractible, then any section \(s\) will also serve as \(e\) in the above definition.

\begin{corollary}\label{cor:contractible}
    Let \(q\colon Y \to S\) be a contractible map.
    Then the functor \(q^\ast\) is fully faithful.
\end{corollary}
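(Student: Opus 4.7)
The plan is to show that the unit $\eta_q \colon F \to q_\ast q^\ast F$ is an isomorphism for every $F\in\Sh(S)$ by producing a two-sided inverse using the section $e$ and invoking the homotopy invariance corollary just above.

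First, since $qe = \id_S$, we have $e^\ast q^\ast \simeq \id_{\Sh(S)}$, so $q^\ast$ already admits a retraction $e^\ast$ and the unit $\eta_q$ is automatically split mono. Define a candidate inverse
\[
  \epsilon \colon q_\ast q^\ast F \xrightarrow{\;q_\ast(\eta_e q^\ast F)\;} q_\ast e_\ast e^\ast q^\ast F \;\simeq\; F,
\]
where the last identification uses $qe=\id_S$ (hence $q_\ast e_\ast = \id$ and $e^\ast q^\ast = \id$). A standard triangle-identity calculation shows $\epsilon \circ \eta_q = \id_F$: it unpacks to the unit of the adjunction associated with the composite $qe=\id_S$, which is the identity.

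The remaining and nontrivial identity is $\eta_q \circ \epsilon = \id_{q_\ast q^\ast F}$. Here I would invoke the corollary on homotopic maps over $S$ applied to the two self-maps $\id_Y$ and $eq$ of $Y$ over $S$ (both satisfy $q\circ({-})=q$), which are homotopic over $S$ by the defining homotopy of contractibility. That corollary asserts that the induced endomorphisms of $q_\ast q^\ast$ coincide. The endomorphism induced by $\id_Y$ is manifestly the identity of $q_\ast q^\ast$. The endomorphism induced by $eq$ is, after unpacking the unit $\eta_{eq} = e_\ast \eta_q e^\ast \circ \eta_e$ at $q^\ast F$ and applying $q_\ast$, exactly the composite
\[
  q_\ast q^\ast F \xrightarrow{\;q_\ast \eta_e q^\ast F\;} q_\ast e_\ast F \;\simeq\; F \xrightarrow{\;\eta_q F\;} q_\ast q^\ast F,
\]
which is $\eta_q\circ \epsilon$. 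Equating the two yields $\eta_q\circ\epsilon = \id$.

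Combining the two equalities, $\eta_q$ is an isomorphism and therefore $q^\ast$ is fully faithful. The main (and essentially only) step requiring attention is the bookkeeping in the second part: correctly identifying the endomorphism of $q_\ast q^\ast$ induced by $eq$ (via the unit of the composite adjunction) with $\eta_q\circ\epsilon$, so that the homotopy invariance corollary closes the argument.
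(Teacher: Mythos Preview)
Your argument is correct and is precisely the intended one: the paper leaves the proof implicit because it follows directly from the preceding corollary on homotopic maps over \(S\), applied to \(\id_Y\) and \(eq\), together with the elementary observation that \(e^\ast q^\ast \simeq \id\). Your bookkeeping identifying the endomorphism of \(q_\ast q^\ast\) induced by \(eq\) with \(\eta_q \circ \epsilon\) is exactly right.
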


Thus, for a contractible morphism \(q\), we have an isomorphism \(q_\ast q^\ast \simeq s^\ast q^\ast\). In the following, we provide a sufficient condition for a sheaf to be in the image of \(q^\ast\).

\begin{definition}
    Let \(q\colon Y \to S\).
    Let us say an object \(A\in \Sh(Y)\) is \emph{constant along \(q\)} if it belongs to the essential image of \(q^\ast\).
    Similarly, \(A\) is said to be \emph{locally constant along \(q\)} if there exists an open cover \(j\colon \coprod U_\alpha \to Y\) such that each \(j_\alpha^\ast(A)\in \Sh(U_\alpha)\) is constant along \(qj_\alpha \colon U_\alpha \to S\).
\end{definition}

According to the definition, we obtain two full subcategories \[\Sh_{\cnst/S}(Y)\subset \Sh_{\lcnst/S}(Y)\subset \Sh(Y)\] spanned by constant and locally constant objects along \(q\), respectively.

\begin{lemma}\label{lem:globallyconstant}
    Suppose that \(q\colon Y \to S\) is a Banach ({or more generally locally convex}) vector bundle.
    Let \(A \in \Sh(Y)\) be locally constant along \(q\). Then it is constant along \(q\).

    The same holds if \(q\) is a composite \(Y \hookrightarrow E \to S\) where \(E \to S\) is a Banach vector bundle and \(Y \subset E\) is 
    locally trivialized as a product of an open in \(S\) and a convex subspace of a fiber of \(E\to S\). For example, it works for \(q = \pr \colon S\times [0,1] \to S\).
\end{lemma}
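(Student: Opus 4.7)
My plan is to show that $A$ lies in the essential image of $q^\ast$ by producing an isomorphism $A\simeq q^\ast s^\ast A$, where $s\colon S\to Y$ is the zero section (or a convenient local section through the convex fibers in the second case of the lemma). Since the Banach bundle $q$ is contractible via the fiberwise scaling homotopy $H\colon Y\times[0,1]\to Y$, $(y,t)\mapsto ty$, satisfying $H\circ i_0=sq$, $H\circ i_1=\id_Y$, and $q\circ H=q\circ\pi_Y$, \cref{cor:contractible} guarantees that $q^\ast$ is fully faithful, so any such isomorphism will suffice. To construct it, I will show that $H^\ast A\in\Sh(Y\times[0,1])$ is constant along $\pi_Y\colon Y\times[0,1]\to Y$; then applying $i_0^\ast$ and $i_1^\ast$ to the resulting factorization $H^\ast A\simeq\pi_Y^\ast C$ yields $q^\ast s^\ast A\simeq C\simeq A$, as desired.

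Local constancy of $H^\ast A$ along $\pi_Y$ is immediate from the hypothesis: on each open $H^{-1}(U_\alpha)\subset Y\times[0,1]$, where $A|_{U_\alpha}\simeq(qj_\alpha)^\ast B_\alpha$, we have
\[ H^\ast A|_{H^{-1}(U_\alpha)} \simeq (qH)^\ast B_\alpha = (q\pi_Y)^\ast B_\alpha = \pi_Y^\ast(q^\ast B_\alpha), \]
which exhibits $H^\ast A$ as a local pullback via $\pi_Y$. The claim therefore reduces to the auxiliary statement that any sheaf $A''$ on $X\times[0,1]$ locally constant along $\pi_X$ is in fact globally constant along $\pi_X$. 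The natural candidate is $C\coloneqq i_0^\ast A''\in\Sh(X)$: one refines the local-constancy cover to products $V\times I$ (with $V\subset X$ open and $I\subset[0,1]$ an open interval) so that $A''|_{V\times I}\simeq\pr_V^\ast C_{V,I}$, and then uses $\AA^1$-invariance (\cref{thm:A1-inv}), which provides fully faithfulness of $\pr_V^\ast$ on each $V\times J$, together with the compactness of $[0,1]$ to glue the $C_{V,I}$'s into a global $C\in\Sh(X)$ with $A''\simeq\pi_X^\ast C$.

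The main obstacle will be this final gluing step: the local identifications produced by local constancy must be matched along intersections of overlapping subintervals using the fully faithfulness of $\pr^\ast$, and one must verify that the resulting patching is canonical and does yield $A''\simeq\pi_X^\ast C$ with $C\simeq i_0^\ast A''$. Compactness of $[0,1]$ is essential here, allowing extraction of a finite subcover of each fiber $\{x\}\times[0,1]$ so that the \v{C}ech-type gluing terminates. For the second case of the lemma (the convex sub-bundle), the argument proceeds identically, with the scaling homotopy replaced by a linear interpolation to any section through the convex fibers; since contractibility is only ever used on small open subsets, local existence of such a section is all that is required.
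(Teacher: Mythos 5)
Your opening move --- pulling \(A\) back along the fiberwise scaling homotopy \(H\colon Y\times[0,1]\to Y\) and reducing the Banach-bundle case to the statement for the trivial \([0,1]\)-bundle \(\pi_Y\colon Y\times[0,1]\to Y\) --- is correct and genuinely different from the paper, which never introduces a homotopy and instead runs a \v{C}ech descent argument directly on a cover of \(Y\) by opens of the form \(U_i\times B_i\) with \(B_i\) convex. Your computation showing that \(H^\ast A\) is locally constant along \(\pi_Y\) is right, and evaluating at \(t=0,1\) does produce \(A\simeq q^\ast s^\ast A\) once the auxiliary claim is granted.

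The gap is in the auxiliary claim, which is where essentially all of the content of the lemma lives. Compactness of \([0,1]\) only finitizes the \emph{fiber} direction: by the tube lemma you can arrange, for each point \(x\) of the base, finitely many \(V_x\times J_i\) covering \(V_x\times[0,1]\) on which \(A''\) is constant, and glue those. But you are then still left with constancy only over an arbitrary open cover \(\{V_x\}\) of the base \(X=Y\), and passing from that to global constancy is a full descent problem, not a finite one. Moreover, ``matching local identifications on overlaps and verifying the patching is canonical'' is not a proof for sheaves of spectra; you need coherence data over the whole \v{C}ech nerve. The way to close this --- and it is exactly what the paper's proof does --- is to observe that \(V\mapsto\Sh_{\lcnst/S}(V)\) is a sheaf of categories, compare it with the pullback \(\mathscr{C}(-)\) of \(U\mapsto\Sh(U)\) along \(q\), and use that the comparison functors \(\Sh(U_{i_0\cdots i_n})\to\Sh_{\lcnst/S}(V_{i_0\cdots i_n})\) are fully faithful (by \cref{thm:A1-inv} and \cref{cor:contractible}, since intersections of convex sets are convex), so that the induced functor between limits over the \v{C}ech nerve is fully faithful and an object with locally constant components lifts. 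Once you invoke that machinery, the detour through \([0,1]\) buys you little: the same descent applied directly to convex covers of \(Y\) proves the lemma outright. Separately, in the second case of the lemma the section, and hence the homotopy \(H\), may exist only locally over \(S\), so your reduction by itself only yields constancy locally over \(S\), and one more descent over \(S\) --- again the same mechanism, which your proposal does not supply --- is needed to finish.
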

\begin{proof}
    First, we observe that in the following commutative diagram with \(j\) an open embedding
    \begin{equation*}
        \begin{tikzcd}
            V \ar[r] \ar[d, "p"'] \ar[rd, "q_V"{description}] & Y \ar[d, "q"] \\
            U \ar[r, "j"', hookrightarrow] & S
        \end{tikzcd}
    \end{equation*}
    we have
    \(q_V^\ast = p^\ast j^\ast\) and also \(p^\ast \simeq p^\ast j^\ast j_\ast=q_V^\ast j_\ast\) since \(j_\ast\) is fully faithful. Thus, we have an equality \(\Sh_{\lcnst/S}(V) = \Sh_{\lcnst/U}(V)\) in this situation. Note that since \(q\) is an open map, the above \(U\) can be \(q(V)\).
    
    Consider the sheaves\footnote{Follows from \emph{descent}~\cite[Theorem 6.1.3.9]{HTT} for \(\oo\)-topoi.} \begin{align*}
    \Sh(-) &\colon \Op(Y)^\op \to \Cat \\
    \Sh(-) &\colon \Op(S)^\op \to \Cat.
    \end{align*}
    The subfunctor \[
    \Sh_{\lcnst/S}(-) \colon \Op(Y)^\op \to \Cat
    \] is also a sheaf by definition.
    Let \(\mathscr{C}(-)\colon \Op(Y)^\op \to \Cat\) denote the 
    sheaf obtained by applying the functor \(q^\ast \colon \Sh(S;\Cat)\to \Sh(Y;\Cat)\) to the sheaf \(\Sh(-)\colon \Op(S)^\op \to \Cat\).
    We have a comparison map \(\mathscr{C}(-) \to \Sh_{\lcnst/S}(-)\) by adjunction \(q^\ast\dashv q_\ast\).
    For an open subset \(V\subset Y\), \(\mathscr{C}(V)\) is given by \(q_{V\ast}q_V^\ast(\Sh(-))(q(V))\), which is equivalent to \(\Sh(q(V))\) if \(V \to q(V)\) is contractible (by the unstable homotopy invariance). 
    
    We claim that the map \(\mathscr{C}(-) \to \Sh_{\lcnst/S}(-)\) is an equivalence, which in particular implies that \[q^\ast \colon \Sh(S) \simeq \mathscr{C}(Y) \to \Sh_{\lcnst/S}(Y)\] is an equivalence.
    By descent, it suffices to show that \(\mathscr{C}(q^{-1}U) \to \Sh_{\lcnst/S}(q^{-1}U) = \Sh_{\lcnst/U}(q^{-1}U)\) is an equivalence for \(q^{-1}U \cong U\times \mathcal{H}\).
    Therefore, we may assume that \(q\colon Y \to S\) is trivial: \(S \times \mathcal{H} \to S\).

     Let \(M\in \Sh(S\times \mathcal{H})\) be locally constant over \(S\).
     We take an open cover \(\{V_i\}\) of \(S\times \mathcal{H}\) of the form \(V_i = U_i \times B_i\) for \(B_i\) convex open in \(\mathcal{H}\) such that \(M\) is pulled-back to be constant over \(\coprod V_i\).
     Each of the functor \(\Sh(U_{i_0}\cap\cdots\cap U_{i_n}) \to \Sh(V_{i_0}\cap\cdots\cap V_{i_n})\) is fully faithful since \(V_{i_0,\dots,i_n} \to U_{i_0,\dots,i_n}\) is contractible. The fully faithful embedding 
    \begin{align*}
         \mathscr{C}(S\times \mathcal{H}) & \xrightarrow{\sim} \lim_{[n]\in\DDelta} \prod_{i_0,\dots,i_n} \mathscr{C}(V_{i_0}\cap\cdots\cap V_{i_n}) \\
         &\xleftarrow{\sim} \lim_{\DDelta} \prod \Sh(U_{i_0}\cap\cdots\cap U_{i_n}) \hookrightarrow \lim_{\DDelta} \prod_{i_0\dots,i_n} \Sh_{\lcnst/S}(V_{i_0}\cap\cdots\cap V_{i_n}) \xleftarrow{\sim} \Sh_{\lcnst/S}(S\times \mathcal{H})
     \end{align*}
     contains \(M\) in its image by construction.
     Therefore, the map \(\mathscr{C}(S\times \mathcal{H}) \to \Sh_{\lcnst/S}(S\times\mathcal{H})\) is fully faithful and essentially surjective.
\end{proof}

\begin{corollary}\label{lem:VBinvariance}
    Let \(q\colon Y \to S\) be as in \cref{lem:globallyconstant} and \(s\colon S \to Y\) be a section.
    Then we have an isomorphism \[
    q_\ast \simeq s^\ast
    \] on \(\Sh_{\lcnst/S}(Y)\) the locally constant sheaves relative to \(S\).
\end{corollary}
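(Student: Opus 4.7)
The plan is to reduce to the preceding lemma and exploit that the section \(s\) makes \(q\) a contractible map. By \cref{lem:globallyconstant}, every object of \(\Sh_{\lcnst/S}(Y)\) is in fact constant along \(q\), i.e.~lies in the essential image of \(q^\ast\); equivalently, \(q^\ast\colon \Sh(S)\to \Sh_{\lcnst/S}(Y)\) is essentially surjective.

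Next, I would verify that \(q\) is contractible in the sense of \cref{dfn:contractible}. Using the given section \(s\), define the straight-line homotopy \(h\colon [0,1]\times Y \to Y\) over \(S\) by the formula
\[
h(t,v) = (1-t)\,v + t\, s(q(v)),
\]
which is continuous by the vector bundle structure and lands in \(Y\); in the variant setting of \cref{lem:globallyconstant}, this still makes sense because \(Y\) is locally a product with a convex fiber, and both \(v\) and \(s(q(v))\) lie in that convex fiber. This \(h\) witnesses \(sq\sim \id_Y\) over \(S\), so \cref{cor:contractible} gives that \(q^\ast\) is fully faithful.

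Combining the two reductions, \(q^\ast\colon \Sh(S)\to \Sh_{\lcnst/S}(Y)\) is an equivalence of categories, and its inverse is computed by the restriction of \(q_\ast\). On the other hand, from \(qs = \id_S\) we obtain a strict identity \(s^\ast q^\ast = (qs)^\ast = \id_{\Sh(S)}\), so the restriction of \(s^\ast\) to \(\Sh_{\lcnst/S}(Y)\) is also an inverse to the same equivalence \(q^\ast\). Any two inverses of an equivalence are canonically isomorphic, yielding the desired \(q_\ast\simeq s^\ast\) on \(\Sh_{\lcnst/S}(Y)\). There is no serious obstacle here since \cref{lem:globallyconstant} already did the hard work; the only point requiring mild care is checking the contractibility homotopy in the variant case, which comes down to fiberwise convexity.
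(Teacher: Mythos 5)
Your proof is correct and follows essentially the same route as the paper: both arguments rest on \cref{lem:globallyconstant} identifying \(\Sh_{\lcnst/S}(Y)\) with the essential image of \(q^\ast\) together with \(s^\ast q^\ast \simeq \id\), the paper merely writing the resulting comparison as the explicit map \(q_\ast = s^\ast q^\ast q_\ast \to s^\ast\) and checking it is an isomorphism after precomposition with \(q^\ast\). Your separate verification that \(q\) is contractible is harmless but redundant, since \cref{lem:globallyconstant} already asserts that \(q^\ast \colon \Sh(S) \to \Sh_{\lcnst/S}(Y)\) is an equivalence, full faithfulness included.
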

\begin{proof}
    Consider the map \(%
    q_\ast = s^\ast q^\ast q_\ast \to s^\ast.%
    \)
    It is an isomorphism after precomposed with \(q^\ast\) as we have already seen.
    This suffices for the proof.
\end{proof}

We record the monodromy equivalence theorem, which tells us that the category of sheaves contains local systems as its full subcategory.
We do not use this theorem in the paper, but we would like to regard it as a motivation or justification to use sheaves of spectra.

\begin{theorem}
    [{\cite[A.1.15]{HA}}]
    \label{thm:monodromy}
    Let \(\LocSys\colon \Pro(\an) \to \Top_\oo^{\R}\) be the right Kan extension of the functor \((\PSh(-), (-)_\ast) \colon \an \to \Top_\oo\). Since it admits a left adjoint~\cite[7.1.6.15]{HTT}, we let \(\Pi_\oo\) denote the left adjoint \(\Top_\oo \to \Pro(\an)\).
    
    Let \(S\) be a locally contractible and hypercomplete topological space.
    Then:
    \begin{enumerate}
        \item \(\Pi_\oo S\) is pro-constant. It is equivalent to the singular simplicial set \(\Sing(S)\) if, for example, \(S\) is homotopy equivalent to a CW-complex\footnote{More generally, \(\Pi_\oo S \simeq \Sing (S)\) if \(S\) is paracompact, locally contractible and hypercomplete.}.

        \item The functor \(\Sh(S;\an) \to \Fun(\Pi_\oo S, \an)\) induces an equivalence \[\Sh_{\lcnst}(S;\an) \xrightarrow{\sim} \Fun(\Pi_\oo S, \an)\] where the left hand side is the full subcategory spanned by the locally constant objects.
    \end{enumerate}
\end{theorem}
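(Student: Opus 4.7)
The plan is to prove the two parts in order, both relying on open hypercovers of $S$ by contractibles.

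For part (1), the local contractibility hypothesis guarantees that the contractible open subsets form a basis for the topology of $S$. Choose an open hypercover $U_\bullet \to S$ such that each $U_n$ is a disjoint union of contractibles. Since $\Sh(S;\an)$ is hypercomplete, the $\oo$-topos is recovered from the system of $\Sh(U_n;\an)$ via the descent colimit in $\Top_\oo$. Applying the colimit-preserving functor $\Pi_\oo$ then exhibits $\Pi_\oo S$ as the colimit in $\Pro(\an)$ of the simplicial pro-space $\Pi_\oo U_\bullet$. Since contractible opens have shape $\pt$, each $\Pi_\oo U_n$ is the discrete set $\pi_0(U_n) \in \an$, so the colimit lies in $\an \subset \Pro(\an)$, proving pro-constancy. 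For a CW-complex $S$, take a hypercover refining the cell structure (for instance, the open stars of vertices in a barycentric subdivision); the resulting simplicial set has realization canonically weakly equivalent to $\Sing(S)$.

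For part (2), the monodromy functor $\Sh(S;\an) \to \Fun(\Pi_\oo S, \an)$ arises from the counit of $\Pi_\oo \dashv \LocSys$ together with the identification $\LocSys(K) \simeq \PSh(K) = \Fun(K,\an)$ for $K \in \an$. To show its restriction $\Sh_{\lcnst}(S;\an) \to \Fun(\Pi_\oo S, \an)$ is an equivalence, observe that both sides satisfy descent along $U_\bullet$: on the left because being locally constant is local and sheaves satisfy hyperdescent, on the right because $\Fun(-,\an)$ converts colimits of spaces into limits of presentable categories. On each contractible stratum $U_n$, both sides identify with $\an$ (the former because constant sheaves on a contractible are classified by their stalk; the latter because $\Pi_\oo U_n \simeq \pi_0(U_n)$ is discrete), and the comparison functor is the identity up to canonical equivalence. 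A totalization argument along $U_\bullet$ then propagates this equivalence globally.

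The main obstacle will be checking that the two descent squares commute coherently, which hinges on the naturality of $\Pi_\oo$ along open restrictions: one must verify that $\Pi_\oo(U_n) \to \Pi_\oo(S)$ agrees with the map induced by the geometric morphism $\Sh(U_n;\an) \to \Sh(S;\an)$ under the above identifications. The hypercompleteness assumption is essential here, since otherwise $\Sh(S;\an)$ cannot be reconstructed from its restrictions to contractible opens (as seen in pathological examples such as the Hawaiian earring). Once these compatibilities are in place, descent reduces the theorem to the tautological equivalence on contractibles, and combining with part (1) yields the classical monodromy equivalence $\Sh_{\lcnst}(S;\an) \simeq \Fun(\Sing S, \an)$ for CW-complexes.
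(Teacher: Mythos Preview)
The paper does not give its own proof of this theorem: it is stated with the attribution \cite[A.1.15]{HA} and is explicitly flagged as a result that is ``recorded'' for motivation and not used elsewhere. So there is no proof in the paper to compare against.

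Your outline is essentially the argument Lurie gives in \cite[A.1]{HA}: choose a contractible open hypercover, use hypercompleteness to get descent for $\Sh(-;\an)$ in $\Top_\oo$, push through the colimit-preserving $\Pi_\oo$, and then compute both sides of the monodromy equivalence termwise on contractibles. That strategy is sound. Two places deserve more care if you want a complete proof rather than a sketch. First, the identification $\Pi_\oo S \simeq \Sing(S)$ for CW-complexes does not follow just from ``a hypercover refining the cell structure''; you need a comparison between the geometric realization of the simplicial set $n\mapsto \pi_0(U_n)$ and $\Sing(S)$, which in \cite{HA} goes through paracompactness and an explicit good-cover argument (this is where the footnote's hypothesis enters). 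Second, the phrase ``constant sheaves on a contractible are classified by their stalk'' is exactly the $\AA^1$-invariance / homotopy invariance statement (cf.\ \cref{thm:A1-inv} and \cref{cor:contractible}), and you should invoke it rather than treat it as obvious, since it is the actual content on each piece of the hypercover. With those two points tightened, your proposal matches the cited proof.
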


\subsection{Duality results}

\begin{notation}\label{conv:smoothness}
    \(\mbox{}\)We would like to use some ad hoc terminology concerning cohomological smoothness.
    \begin{enumerate}
        \item A map \(p\colon Y \to S\) is said to be \emph{topologically smooth} if it is an open map and for every point \(y\in Y\) there exists a neighborhood \(V\) of \(y\) and there exists a homeomorphism \(V \cong p(V) \times \RR^d\) such that the map \(p|_V\) agrees with the composite \(V \cong p(V) \times \RR^d \xrightarrow{\pr} p(V)\).

        \item By a \emph{topological regular immersion}, we mean a closed embedding \(i\colon Z \hookrightarrow Y\) such that 
        there exists 
        an open neighborhood \( N \) of \( i \) such that the map \( Z \to N \) is homeomorphic to the zero section \( Z \to E \) for some vector bundle \( E \) over \( Z \).
    \end{enumerate}
\end{notation}

\begin{construction}\label{const:Th(microbundle)}
  We define Thom spectrum sheaves for (relative tangent) microbundles, which is a variant of Thom spectrum sheaves for vector bundles \cref{const:Th(ind(l))}.

  Let \( p\colon Y \to S \) be topologically smooth.
  Then define \( \Th(T_p) \) to be \( \pi_1{}_\sharp \Delta_!(\unit) \) where \( \Delta \colon Y \to Y \times_S Y \) is the diagonal, \( \pi_1 \colon Y\times_S Y \to Y \) is the first projection, and \( \pi_1{}_\sharp \) is left adjoint to \( \pi_1^\ast \), which exists by the argument of \cref{prop:PD}.
\end{construction}

\begin{remark}\label{rem:purityTh}
To justify our notation, let \( p \colon Y \to S \) be
a \( C^1 \)-submersion between, say, \( C^1 \)-manifolds.
Then we have the relative tangent vector bundle \(q\colon T_p \to Y\) with zero section \( e\colon Y \to T_p \), and there exists a tubular neighborhood of the diagonal \( \Delta \), i.e., an open embedding \( j \colon T_p \to Y \times_S Y \) such that the diagram
\[\begin{tikzcd}
  Y \ar[r, "\Delta"] \ar[d, "e"']	& Y \times_S Y \ar[d, "\pi_1"] \\
  T_p \ar[r, "q"'] \ar[ru, hookrightarrow, "j"]	& Y
\end{tikzcd}\]
commutes.
Therefore, we have an isomorphism \( \pi_1{}_\sharp \Delta_!(\unit) \simeq q_\sharp e_!(\unit) \) since \( j_! = j_\sharp \).
Furthermore, the latter \( q_\sharp e_!(\unit) \) is also of the form \( q_!(q^!(\unit) \otimes e_!(\unit)) \) by \cref{prop:PD}, the projection formula implies \( q_!(q^!(\unit)\otimes e_!(\unit)) \simeq q_!e_! e^\ast q^!(\unit) = e^\ast q^!(\unit) \), and \( e^\ast q^!(\unit) \simeq q_\ast q^!(\unit) \) by \cref{lem:VBinvariance}.
\end{remark}

\begin{remark}\label{rem:KunnethSauve}
    Let \(Y\) (and \(Y'\)) be a locally compact space so that we have the Künneth equivalence \(\Sh(S\times Y) \xleftarrow{\sim} \Sh(S) \otimes \Sh(Y)\).
    By the projection formula, it follows that the functor \[(f\times g)_! \colon \Sh(S'\times Y') \to \Sh(S\times Y)\] is identified with \(f_!\otimes g_!\) through the Künneth equivalence.

    Therefore, whenever we have an isomorphism of the form \(g^!(\unit) \otimes g^\ast (-) \simeq g^!(-)\), which in particular implies that \(g^!\) admits a further right adjoint, we identify the functor \((\id_S \times g)^!\) with \(\id_{\Sh(S)} \otimes g^!\). In other words, \((\id_S \times g)^! \simeq \pr_2^\ast g^!\) where \(\pr_2\) is the projection \(S\times Y' \to Y'\).
\end{remark}

\begin{proposition}[Poincaré duality]\label{prop:PD}
    A topologically smooth 
    morphism \(p\) is cohomologically smooth in the following sense:
    \(p\) is \(!\)-able, 
    \(p^!(\unit)\) is locally constant, \(\otimes\)-invertible, the map\footnote{\(\begin{tikzcd}[ampersand replacement=\&] \cdot \ar[r, "\pi_1"] \ar[d, "\pi_2"'] \ar[rd, phantom, very near start, "\lrcorner"] \& \cdot \ar[d, "p"] \\ \cdot \ar[r, "p"'] \& \cdot \end{tikzcd}\)} \(\pi_1^\ast p^!(\unit) \to \pi_2^!(\unit)\) is an isomorphism and the map \[
    p^!(\unit) \otimes p^\ast(-) \to p^!(-)
    \] is an isomorphism.
    Moreover, \( p^!(\unit) \) is identified with the Thom spectrum sheaf \( \Th(T_p) \) of the tangent microbundle (\cref{const:Th(microbundle)}).
\end{proposition}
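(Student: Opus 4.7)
The strategy is a locality-based reduction to computing the dualizing sheaf of $\RR^d \to \pt$; then a compactification argument handles the base case, and the remaining claims (including the Thom identification) follow by direct local computation.

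\textbf{Reductions.} A topologically smooth $p$ is separated locally proper, since locally it is of the form $U \times \RR^d \to U$ and $\RR^d$ is locally compact; hence $p^!$ is defined by \cref{thm:top6FF}. All the asserted claims---local constancy and $\otimes$-invertibility of $p^!(\unit)$, the basechange isomorphism $\pi_1^\ast p^!(\unit) \to \pi_2^!(\unit)$, the projection formula, and the identification with $\Th(T_p)$---are local on $Y$, since $(p \circ j)^! \simeq j^\ast p^!$ for $j$ open (a consequence of $(pj)_! = p_!j_!$ in \cref{thm:top6FF}). We may thus assume $p = \pr\colon S \times \RR^d \to S$. \Cref{lem:locproperpullback} gives $\Sh(S\times \RR^d) \simeq \Sh(S) \otimes \Sh(\RR^d)$, and \cref{rem:KunnethSauve} identifies $p^!$ with $\pr_2^\ast \circ q^!$ for $q\colon \RR^d \to \pt$. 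A further Künneth (using $\RR^d = \RR \times \RR^{d-1}$) reduces to $d = 1$.

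\textbf{Base case.} For $q_1\colon \RR \to \pt$, compactify $q_1 = \bar q \circ j$ with $j\colon \RR \hookrightarrow S^1$ open and $\bar q\colon S^1 \to \pt$ proper; let $i\colon \{\infty\} \hookrightarrow S^1$ denote the complementary point. Since $j$ is open, $j^! = j^\ast$, so $q_1^!(\unit) = j^\ast \bar q^!(\unit)$. Applying $\bar q_\ast$ to the recollement triangle $i_\ast i^!(\unit) \to \unit_{S^1} \to j_\ast j^\ast(\unit)$ yields a fiber sequence of spectra
\[
    i^!(\unit) \;\longrightarrow\; \Gamma(S^1;\S) \;\longrightarrow\; \Gamma(\RR;\S),
\]
and the classical identifications $\Gamma(S^1;\S) \simeq \S \oplus \Sigma^{-1}\S$ and $\Gamma(\RR;\S) \simeq \S$ force $i^!(\unit) \simeq \Sigma^{-1}\S$. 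Running the same recollement with $\{\infty\}$ replaced by any other point (using the rotation action of $S^1$ to reduce to the case above) shows $\bar q^!(\unit)$ is locally constant of value $\Sigma\S$, hence $q_1^!(\unit) \simeq \Sigma \unit_\RR$. Inducting, $q^!(\unit) \simeq \Sigma^d \unit_{\RR^d}$, which is locally constant and $\otimes$-invertible.

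\textbf{Remaining claims and Thom identification.} With $p^!(\unit) \simeq \Sigma^d\unit$ on the trivial chart, the projection formula and the basechange isomorphism both reduce to tautologies on that chart, since under the Künneth description both sides become $\Sigma^d$-shifts of the corresponding pullbacks. For the Thom identification, observe that $\pi_1$ (being a basechange of $p$) is topologically smooth, so the argument just given yields $\pi_1^!(\unit) \simeq \Sigma^d\unit$, whence $\pi_{1\sharp} = \pi_1{}_!\bigl(\pi_1^!(\unit) \otimes ({-})\bigr)$ is well-defined. The projection formula, together with $\pi_1\Delta = \id$ and $\Delta^\ast \pi_1^!(\unit) \simeq \Sigma^d\unit_Y$, gives
\[
    \pi_{1\sharp}\Delta_!(\unit) \;\simeq\; \pi_1{}_!\Delta_!\bigl(\Delta^\ast \pi_1^!(\unit)\bigr) \;\simeq\; \Sigma^d\unit_Y \;\simeq\; p^!(\unit).
\]
A canonical comparison map $\pi_{1\sharp}\Delta_!(\unit) \to p^!(\unit)$ is built from the counit $\Delta_!\Delta^! \to \id$ applied to $\pi_2^!(\unit)$ (using $\Delta^!\pi_2^!(\unit) = \unit$ since $\pi_2\Delta = \id$), post-composed with the basechange $\pi_2^!(\unit) \simeq \pi_1^\ast p^!(\unit)$ and the counit $\pi_{1\sharp}\pi_1^\ast \to \id$. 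The principal obstacle is not a single deep theorem but the organizational care needed to track these natural transformations canonically through each reduction; the only genuinely topological input is the classical computation $\Gamma(S^1;\S) \simeq \S \oplus \Sigma^{-1}\S$.
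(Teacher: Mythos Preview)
Your overall architecture matches the paper's: reduce locally to the trivial bundle, use the K\"unneth equivalence (\cref{lem:locproperpullback} and \cref{rem:KunnethSauve}) to reduce to $\RR^d \to \pt$, and derive the Thom identification from $\pi_{1\sharp}\Delta_!(\unit) \simeq \Delta^\ast \pi_1^!(\unit) \simeq \Delta^\ast \pi_2^\ast p^!(\unit) = p^!(\unit)$. The paper simply cites Volpe for the base case, whereas you attempt a direct computation; that is where the problem lies.

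Your recollement computation is correct but computes the wrong object. You apply the triangle $i_\ast i^!(\unit_{S^1}) \to \unit_{S^1} \to j_\ast j^\ast(\unit_{S^1})$ and correctly deduce $i^!(\unit_{S^1}) \simeq \Sigma^{-1}\S$: this is the costalk of the \emph{constant} sheaf at a point. Repeating this at every point of $S^1$ only re-establishes the same fact about $\unit_{S^1}$; it says nothing about $\bar q^!(\unit)$. The sentence ``shows $\bar q^!(\unit)$ is locally constant of value $\Sigma\S$'' is a non sequitur. To go from costalks to the dualizing sheaf you would need to already know that $\bar q^!(\unit)$ is locally constant (in which case $i_x^!\bar q^!(\unit) = (\bar q i_x)^!(\unit) = \S$ combined with $i_x^!(\underline{E}) \simeq \Sigma^{-1}E$ would give $E = \Sigma\S$), but establishing that local constancy is precisely the content of the base case. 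One repair is to compute $\Gamma\bigl((a,b);\, q_1^!(\unit)\bigr) = \Map_\Sp\bigl((q_1 j_{(a,b)})_!\unit,\, \S\bigr) = \Map_\Sp(\Sigma^{-1}\S, \S) = \Sigma\S$ directly for every open interval and check that the restriction maps are equivalences; another is to invoke Volpe as the paper does. A smaller point: your inductive reduction from $\RR^d$ to $\RR$ via \cref{rem:KunnethSauve} already presupposes the projection formula $g^!(\unit)\otimes g^\ast(-)\simeq g^!(-)$ for $g\colon \RR^{d-1}\to \pt$, so the induction must be set up to prove all claims simultaneously in dimension $d-1$ before invoking K\"unneth in dimension $d$.
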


\begin{proposition}[Relative purity]\label{prop:relPurity}
    Let \(i\colon Z \to Y\) be a topological regular 
    immersion.
    Then \(i^!(\unit)\) is locally constant, \(\otimes\)-invertible and is given as \(\Th(-N_i)\) the negative Thom spectrum sheaf of the normal sheaf, i.e., the \( \otimes \)-inverse of \( \Th(N_i) \).
\end{proposition}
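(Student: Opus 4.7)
The plan is to reduce everything, via the tubular neighbourhood hypothesis built into the definition of a topological regular immersion, to the case of the zero section of a vector bundle and then to exploit Poincaré duality together with the Picard-type behaviour of \(f^!\) on invertible locally constant sheaves. By assumption, one can factor \(i\) (locally on \(Y\), and this is enough since \(i^!\) is local on the target) as
\[
  i\colon Z \xrightarrow{\;e\;} E \xrightarrow{\;j\;} Y,
\]
where \(j\) is an open embedding and \(e\) is the zero section of a (finite rank) vector bundle \(q\colon E \to Z\), so that \(N_i = E\). Since \(j\) is open one has \(j^! = j^\ast\), hence
\[
  i^!(\unit_Y) \;=\; e^! j^!(\unit_Y) \;=\; e^!(\unit_E),
\]
which reduces the statement to computing the dualizing sheaf of a vector bundle zero section.

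Next, I apply Poincaré duality (\cref{prop:PD}) to the topologically smooth morphism \(q\): the object \(L \coloneqq q^!(\unit_Z) \simeq \Th(T_q)\) is locally constant and \(\otimes\)-invertible. Using \(qe = \id_Z\), the functoriality \(e^! q^! \simeq \id\) yields
\[
  e^!(L) \;=\; e^! q^!(\unit_Z) \;=\; \unit_Z.
\]
Now I invoke \cref{lem:PicardUpperShriek}, which says that for any \(\otimes\)-invertible locally constant object \(L\) and any !-able map \(f\), there is a natural isomorphism \(f^!(B \otimes L) \simeq f^!(B) \otimes f^\ast(L)\). Applied to \(f = e\) and \(B = \unit_E\), this gives
\[
  \unit_Z \;\simeq\; e^!(L) \;\simeq\; e^!(\unit_E) \otimes e^\ast(L) \;=\; i^!(\unit_Y) \otimes e^\ast \Th(T_q).
\]

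To finish, I need the identification \(e^\ast \Th(T_q) \simeq \Th(N_i)\). For a vector bundle \(q\colon E \to Z\) the diagonal \(\Delta\colon E \hookrightarrow E\times_Z E\) fits, via the shear isomorphism \((x,y)\mapsto (x,y-x)\) on \(E\times_Z E\), into a commutative diagram in which \(\Delta\) is identified with the zero section of the vector bundle \(\pr_1^\ast E\) on \(E\); applying \(q\)-pullback of \cref{const:Th(microbundle)} then gives \(\Th(T_q) \simeq q^\ast \Th(E)\), so that \(e^\ast \Th(T_q) \simeq \Th(e^\ast q^\ast E) = \Th(E) = \Th(N_i)\). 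Substituting back yields
\[
  i^!(\unit_Y) \;\simeq\; \Th(N_i)^{\otimes -1} \;=\; \Th(-N_i),
\]
which is locally constant and \(\otimes\)-invertible as the tensor inverse of a locally constant invertible object.

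The main technical point is the compatibility of \(\Th(T_q)\) with pullback along the zero section, i.e.\ the shear trivialization identifying the tangent microbundle of a vector bundle with its own pullback; once this is in hand, the rest is a purely formal manipulation of the six operations and the Picard calculus for \(f^!\).
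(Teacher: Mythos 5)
Your proof is correct and follows essentially the same route as the paper: factor \(i\) through the zero section \(e\) of the tubular neighbourhood, reduce to \(e^!(\unit)\), and apply \cref{lem:PicardUpperShriek} to \(q^!(\unit)\) together with \(e^!q^!\simeq\id\) to exhibit \(e^\ast q^!(\unit)\) as the \(\otimes\)-inverse. The only cosmetic difference is the last step: where you use a shear argument to identify \(e^\ast\Th(T_q)\) with \(\Th(N_i)\), the paper simply invokes \cref{rem:purityTh} (i.e.\ \(e^\ast q^!(\unit)\simeq q_\ast q^!(\unit)=\Th(N_i)\) by \cref{lem:VBinvariance}), which is also what your shear argument ultimately reduces to.
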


\begin{proof}[\cref{prop:PD} and \cref{prop:relPurity}]
    Both proofs can essentially be found in~\cite{Volpe}.

    In fact,
    to see that the maps \( \pi_1^\ast p^! (\unit) \to \pi_2^!(\unit) \) and \( p^!(\unit)\otimes p^\ast(-)\to p^!(-) \) are isomorphisms, by descent it suffices to show for the trivial vector bundle \( \RR^n \times S \to S \), which in turn follows by \cref{rem:KunnethSauve} from the case for the map \( \RR^n \to \ast \). This case is already proved in~\cite{Volpe}.
%
    Moreover, the isomorphism \( p^!(\unit) \simeq \Th(T_p) \) can be obtained as follows.
    By \( \pi_1^!(\unit) \otimes \pi_1^\ast \simeq \pi_1^! \), we have \( \pi_1{}_\sharp = \pi_1{}_!(\pi_1^!(\unit) \otimes -) \).
    By the projection formula, \( \pi_1{}_!(\pi_1^!(\unit) \otimes \Delta_!(\unit) \simeq \Delta^\ast \pi_1^!(\unit) \).
    Finally, by \( \pi_1^!(\unit) \simeq \pi_2^\ast p^!(\unit) \), we have \( \pi_1{}_\sharp \Delta_!(\unit) \simeq p^!(\unit) \).

    For relative purity,
    let \( i\colon Z \to X \) factor as the zero-section \( e\colon Z \to N \) of some vector bundle \( p\colon N \to Z \), followed by an open embedding \( j\colon N \to X \).
    Then \( i^!(\unit) = e^! j^!(\unit) = e^!(\unit) \).
    By \cref{lem:PicardUpperShriek}, we have \( e^!(\unit) \otimes e^\ast r^!(\unit) \simeq e^! p^!(\unit) = \unit \). Thus, \( e^!(\unit) \simeq i^!(\unit) \) is \( \otimes \)-invertible with inverse \( e^\ast p^!(\unit) \), which is isomorphic to \( \Th(N) \) (see \cref{rem:purityTh}).
\end{proof}

\begin{lemma}\label{lem:PicardUpperShriek}
    Let \(f\colon Y \to S\) be any map.
    Then the map \[f^!(\unit)\otimes f^\ast(-) \to f^!(-)\]
    is an isomorphism on \(\Pic(\Sh(S))\) the \(\otimes\)-invertible objects.
\end{lemma}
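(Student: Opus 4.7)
The plan is to use the projection formula together with the adjunction $f_! \dashv f^!$ to produce a Yoneda-style identification when the input is $\otimes$-invertible.

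First, I would observe that since $f^\ast$ is symmetric monoidal, it carries $\Pic(\Sh(S))$ into $\Pic(\Sh(Y))$; in particular, for any $L \in \Pic(\Sh(S))$ with inverse $L^{-1}$, the pullback $f^\ast L$ is $\otimes$-invertible in $\Sh(Y)$ with inverse $f^\ast L^{-1}$. This allows us to cancel $f^\ast L$ against $f^\ast L^{-1}$ inside $\Sh(Y)$, and similarly for $L, L^{-1}$ inside $\Sh(S)$.

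Next, for an arbitrary test object $A \in \Sh(Y)$, I would chain together the following natural equivalences:
\begin{align*}
    \Map_{\Sh(Y)}(A, f^!(\unit) \otimes f^\ast L)
        &\simeq \Map_{\Sh(Y)}(A \otimes f^\ast L^{-1}, f^!(\unit)) \\
        &\simeq \Map_{\Sh(S)}(f_!(A \otimes f^\ast L^{-1}), \unit) \\
        &\simeq \Map_{\Sh(S)}(f_!(A) \otimes L^{-1}, \unit) \\
        &\simeq \Map_{\Sh(S)}(f_!(A), L) \\
        &\simeq \Map_{\Sh(Y)}(A, f^!(L)).
\end{align*}
Here the first and fourth equivalences use invertibility of $f^\ast L$ and $L$ respectively, the second and fifth are the adjunction $f_! \dashv f^!$, and the middle one is the projection formula (the corollary to \cref{thm:top6FF}). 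By the Yoneda lemma, this produces a natural equivalence $f^!(\unit) \otimes f^\ast L \xrightarrow{\sim} f^!(L)$.

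Finally, I would verify that this Yoneda-derived equivalence coincides with the natural comparison map of the statement, which is defined as the $(f_! \dashv f^!)$-adjoint of the composite $f_!(f^!(\unit) \otimes f^\ast L) \simeq f_! f^!(\unit) \otimes L \to \unit \otimes L = L$ obtained from the projection formula followed by the counit tensored with $L$. This is the only mildly technical part of the argument: one evaluates both natural transformations on $\id_A$ for $A = f^!(\unit) \otimes f^\ast L$ and traces the diagrams. Since the two maps are assembled from exactly the same units, counits, and projection-formula isomorphism, the agreement is essentially automatic. I expect this bookkeeping to be the main (but very minor) obstacle.
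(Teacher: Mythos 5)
Your proof is correct and is essentially the paper's argument in different clothing: the paper passes to the left-adjoint mates of both sides (so that the comparison map becomes literally the projection-formula isomorphism $f_!(-)\otimes E^{-1}\to f_!(-\otimes f^\ast E^{-1})$, and no separate compatibility check is needed), whereas you run the same adjunctions $f_!\dashv f^!$ and $(-)\otimes L^{-1}\dashv (-)\otimes L$ through mapping spaces and invoke Yoneda. The only cosmetic cost of your version is the final verification that the Yoneda-derived equivalence agrees with the canonical map, which you correctly flag and which the mate formulation absorbs for free.
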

\begin{proof}
    We first note that a \(\otimes\)-invertible object \(E\in \Sh(S)\) is dualizable and the inverse is identified with the dual. Also, the symmetric monoidal functor \(f^\ast\) preserves duality data. In particular, the functors \((-)\otimes E\) and \((-)\otimes f^\ast E\) admit left adjoints \((-)\otimes E^{-1}\) and \((-)\otimes f^\ast E^{-1}\), respectively.
    Passing to the left adjoints of the both sides of the map
    \[
    f^!(-) \otimes f^\ast(E) \to f^!(-\otimes E)
    \]
    we get the map
    \[
    f_!( -\otimes f^\ast E^{-1} ) \leftarrow f_!(-) \otimes E^{-1},
    \] which is the isomorphism exhibiting the projection formula.
\end{proof}

\begin{lemma}
    \label{lem:basecahgeofdualizingsheaves}
    Let \(p\colon X \to S\) be topologically smooth. Then for any \(g\colon S' \to S\), the map \(g'^\ast p^!(\unit) \to p'^!(\unit)\), induced from the cartesian square \(\begin{tikzcd}[row sep=small, column sep=small]
        X' \ar[r, "g'"] \ar[d, "p'"'] \ar[rd, phantom, very near start, "\lrcorner"] & X \ar[d, "p"] \\
        S' \ar[r, "g"'] & S
    \end{tikzcd}\), is an isomorphism.

    Let \(e\colon S \to E\) be the zero section to a vector bundle \(p\colon E \to S\).
    For any \(g\colon S' \to S\), consider the cartesian rectangle \[
    \begin{tikzcd}
        S' \ar[r, "g"] \ar[d, "e'"'] \ar[rd, phantom, very near start, "\lrcorner"] & S \ar[d, "e"] \\
        E' \ar[r, "g'"] \ar[d, "p'"'] \ar[rd, phantom, very near start, "\lrcorner"] & E \ar[d, "p"] \\
        S' \ar[r, "g"'] & S.
    \end{tikzcd}
    \] Then the map \(g^\ast e^!(\unit) \to e'^!(\unit)\) is an isomorphism.

    Consequently, any map \(f\colon X \to S\) that is locally on the source decomposed as a composite of a zero section and a topologically smooth morphism also has the isomorphism \(g'^\ast f^!(\unit) \to f'^!(\unit)\).
\end{lemma}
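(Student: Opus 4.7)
The lemma has three progressively more general parts. The plan is to establish part (1) and deduce parts (2) and (3) formally using \cref{lem:PicardUpperShriek}.

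For part (1), I would start with Poincaré duality (\cref{prop:PD}), which identifies $p^!(\unit)$ with $\Th(T_p)$, and by \cref{const:Th(microbundle)} this equals $\pi_1{}_\sharp\Delta_!(\unit)$ where $\pi_1\colon X\times_S X\to X$ is the first projection and $\Delta\colon X\to X\times_S X$ is the diagonal. Under base change along $g$, the space $X\times_S X$ pulls back to $X'\times_{S'}X' = (X\times_S X)\times_S S'$; writing $\tilde g$ for the induced map to $X\times_S X$, the squares with sides $(\tilde g,g',\pi_1',\pi_1)$ and $(g',\tilde g,\Delta,\Delta')$ are both cartesian. Proper basechange (\cref{thm:PBC}) handles $\Delta_!$ since $\Delta$ is a closed (hence proper) immersion, giving $\tilde g^\ast\Delta_!\simeq\Delta'_!g'^\ast$. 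For the $\pi_1{}_\sharp$ piece, I need a smooth-basechange statement $g'^\ast\pi_1{}_\sharp\simeq\pi_1'{}_\sharp\tilde g^\ast$; since topologically smooth maps are by definition locally trivial bundles $Y\times\RR^d\to Y$, by descent in the source it suffices to verify this in the trivial case, where the Künneth equivalence (\cref{rem:KunnethSauve}) decomposes $\pi_1{}_\sharp$ as a tensor product of $\id$ with a fixed functor, and the commutation with $g^\ast\otimes\id$ becomes formal. Chaining the isomorphisms gives $g'^\ast p^!(\unit)\simeq p'^!(\unit)$.

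For part (2), the argument in the proof of \cref{prop:relPurity}, combined with \cref{lem:PicardUpperShriek} and $pe=\id$, shows $e^!(\unit)\simeq(e^\ast p^!(\unit))^{-1}$ in the Picard group of $\Sh(S)$. The functor $g^\ast$ is symmetric monoidal and thus preserves $\otimes$-inverses. Using the functoriality $g^\ast e^\ast\simeq e'^\ast g'^\ast$ arising from the pullback square for $e$, and applying part (1) to $p$, one computes
\[g^\ast e^!(\unit) \simeq (g^\ast e^\ast p^!(\unit))^{-1} \simeq (e'^\ast g'^\ast p^!(\unit))^{-1} \simeq (e'^\ast p'^!(\unit))^{-1} \simeq e'^!(\unit).\]
For part (3), the local factorization $f=p\circ e$ together with the $\otimes$-invertibility of $p^!(\unit)$ (\cref{prop:PD}) yields via \cref{lem:PicardUpperShriek} the identification $f^!(\unit)\simeq e^!(\unit)\otimes e^\ast p^!(\unit)$. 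Applying $g'^\ast$ (symmetric monoidal) and invoking parts (1) and (2) on each tensor factor, we identify the pullback with the analogous expression for $f'=p'\circ e'$, namely $f'^!(\unit)$. Since the claim is local on the source, this local identification suffices.

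The main obstacle is the smooth-basechange ingredient in part (1), since \cref{prop:SmBC} only addresses open embeddings. My plan to handle it is the local trivialization plus Künneth reduction outlined above, which turns the question into a formal identity of tensored functors; the remaining pieces of the argument are routine once this is in hand.
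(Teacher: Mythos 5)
Your parts (2) and (3) coincide with the paper's proof: the paper also deduces the zero-section case by applying the symmetric monoidal functor \(g^\ast\) to the duality pairing \(e^!(\unit)\otimes e^\ast p^!(\unit)\simeq\unit\), and handles the third claim by descent to a local factorization \(f=pe\) together with \(\ref{lem:PicardUpperShriek}\). Where you genuinely diverge is part (1): the paper disposes of it in one line by invoking \cref{prop:PD} (cohomological smoothness) and citing Scholze's notes, where the base-change stability of the dualizing object of a cohomologically smooth map is a formal consequence of invertibility of \(p^!(\unit)\), the identity \(p^!\simeq p^!(\unit)\otimes p^\ast\), and the self-base-change isomorphism \(\pi_1^\ast p^!(\unit)\xrightarrow{\sim}\pi_2^!(\unit)\). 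You instead unwind \(p^!(\unit)\) as \(\Th(T_p)=\pi_1{}_\sharp\Delta_!(\unit)\) and commute each piece past \(g'^\ast\) separately, using proper basechange for the (closed, hence proper) diagonal and a \(\sharp\)-basechange for \(\pi_1\) obtained by source-local trivialization and Künneth. This is a legitimate and more self-contained route — it makes explicit what the citation hides — at the cost of being longer and of requiring the extra bookkeeping you correctly flag (descent in the source for the \(\sharp\)-basechange, via generation by \(j_!(\unit)\) and \cref{prop:SmBC} for the open-immersion factors). The one point you should be careful about, and which the formal argument in the cited reference handles but your construction does not quite: chaining \(g'^\ast\Th(T_p)\simeq\Th(T_{p'})\) produces \emph{an} isomorphism, whereas the lemma asserts that the \emph{canonical} exchange map \(g'^\ast p^!(\unit)\to p'^!(\unit)\) is invertible; to close this you must check that the identification \(p^!(\unit)\simeq\Th(T_p)\) from \cref{prop:PD} (which passes through the projection formula and \(\pi_1^!(\unit)\simeq\pi_2^\ast p^!(\unit)\)) is itself compatible with the Beck--Chevalley maps. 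This is a routine but nonempty diagram chase, and it is the only real gap between your sketch and a complete proof.
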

\begin{proof}
    The first claim follows from \cref{prop:PD}. See~\cite{Sch6FF}, for example.
    The second claim follows from the natural duality pairing \( e^!(\unit) \otimes e^\ast p^!(\unit) \simeq \unit \) applied to the symmetric monoidal functor \( g^\ast \).

    We show the final claim. By descent, it suffices to see that the map \(j'^\ast g'^\ast f^!(\unit) \to j'^\ast f'^!(\unit)\) is an isomorphism for \(j\colon U\hookrightarrow X\) open such that \(fj\) decomposes into a zero section \(e\colon U \to E\) and a smooth morphism \(p\colon E \to S\). \[
    \begin{tikzcd}
        U' \ar[r, "g''"] \ar[d, "j'"'] \ar[rd, phantom, very near start, "\lrcorner"] & U \ar[d, "j"] \ar[rd, "e"] & \\
        X' \ar[r, "g'"] \ar[d, "f'"'] \ar[rd, phantom, very near start, "\lrcorner"] & X \ar[d, "f"] & E \ar[ld, "p"] \\
        S' \ar[r, "g"'] & S
    \end{tikzcd}
    \] The map \(j'^\ast g'^\ast f^!(\unit) \to j'^\ast f'^!(\unit)\) is thus identified with the map \[
    g''^\ast \underbrace{e^! p^!(\unit)}_{e^!(\unit)\otimes e^\ast p^!(\unit)} \to \underbrace{e'^!p'^!(\unit)}_{e'^!(\unit)\otimes e'^\ast p'^!(\unit)}
    \] and hence an isomorphism since \(g''^\ast\) is symmetric monoidal.
\end{proof}

\section{Recollection: Banach manifold theory}

\subsection{Vector bundles}

A \emph{Banach space} is a complete normed topological vector space (over \(\RR\)).
Let \(\Ban_\RR\) denote the category of Banach spaces and continuous linear maps between them.

\begin{convention}[Banach/Hilbert bundles]\label{conv:HilbBdl}
    For Banach spaces \(\mathcal{H}\) and \(\mathcal{H}'\), we put the norm topology on the space of continuous/bounded linear maps \(\Hom_{\Ban_\RR}(\mathcal{H},\mathcal{H}')\)
    .
    A pleasant property for this topology is that the evaluation and the composition maps \begin{gather*}\mathcal{H}\times \Hom_{\Ban_\RR}(\mathcal{H},\mathcal{H}') \to \mathcal{H}' \\ \Hom_{\Ban_\RR}(\mathcal{H},\mathcal{H}') \times \Hom_{\Ban_\RR}(\mathcal{H}',\mathcal{H}'') \to \Hom_{\Ban_\RR}(\mathcal{H},\mathcal{H}'')\end{gather*} are continuous.
    The notion of \emph{Banach vector bundles} (over a topological space) will always be interpreted in this topology: such notion behaves well since there exists a topological group structure on the space of continuous linear automorphisms (\cref{prop:GL(H)}).
    We would like to define a \emph{Hilbert vector bundle} to be a Banach vector bundle whose fibers admit a further structure of a Hilbert space.
    
    We say a (continuous) map between Banach vector bundles a \emph{linear map} if it can be locally trivialized to a continuous family of bounded linear maps (in terms of the norm topology).
    In the same way, we can consider \emph{linear Fredholm maps} between Banach vector bundles.
\end{convention}

The following well-known properties are fundamental.

\begin{proposition}\label{prop:GL(H)}
    Let \(\mathcal{H}\) and \(\mathcal{H}'\) be Banach spaces. Then:
    \begin{enumerate}
    \item
    A continuous linear bijection \(\mathcal{H} \to \mathcal{H}'\) admits a continuous inverse, i.e., it is an isomorphism in the category \(\Ban_\RR\).
    \item
    The subspace \(\Iso_{\Ban_\RR}(\mathcal{H},\mathcal{H}')\) of continuous linear isomorphisms is open in the norm topology.
    \item
    The group \(\Aut_{\Ban_\RR}(\mathcal{H})\) of continuous linear automorphisms is a topological group in the norm topology, in the sense that the inverse operation is also continuous.
    \end{enumerate}
\end{proposition}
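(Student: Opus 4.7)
All three claims are classical, and the plan is to derive (2) and (3) from the standard Neumann series, while (1) is the open mapping theorem.

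For (1), I would simply invoke Banach's inverse mapping theorem: a continuous linear surjection between Banach spaces is an open map, so if it is also bijective, its set-theoretic inverse is continuous. The proof of this relies on the Baire category theorem applied to the closed balls in the source; I would not reproduce it, since it is a standard fact of functional analysis.

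For (2), fix a continuous linear isomorphism $T \colon \mathcal{H} \to \mathcal{H}'$. By (1), $T^{-1}$ is bounded, say with norm $M \coloneqq \|T^{-1}\|$. For any $S \in \Hom_{\Ban_\RR}(\mathcal{H},\mathcal{H}')$ with $\|S - T\| < 1/M$, write
\[
S = T \bigl(\id_\mathcal{H} + T^{-1}(S-T)\bigr).
\]
The operator $T^{-1}(S-T)$ has norm strictly less than $1$, so the Neumann series $\sum_{n\ge 0} \bigl(-T^{-1}(S-T)\bigr)^n$ converges absolutely in the operator norm to a bounded inverse of $\id_\mathcal{H} + T^{-1}(S-T)$. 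Composing with $T^{-1}$ gives a bounded two-sided inverse of $S$. Hence the open ball of radius $1/M$ around $T$ consists of isomorphisms.

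For (3), the group structure on $\Aut_{\Ban_\RR}(\mathcal{H})$ is already known to have continuous composition by the standing assumption on the norm topology, so only continuity of inversion remains. Suppose $T_n \to T$ in the norm topology with each $T_n$ invertible. From the Neumann series argument in (2), for all sufficiently large $n$ one has a uniform bound $\|T_n^{-1}\| \le 2M$, where $M = \|T^{-1}\|$. The identity
\[
T_n^{-1} - T^{-1} = T_n^{-1}(T - T_n)T^{-1}
\]
then gives $\|T_n^{-1} - T^{-1}\| \le 2M^2 \|T - T_n\| \to 0$, proving continuity of inversion.

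The only nontrivial input is the open mapping theorem needed for (1); the rest is the geometric series trick and the standard identity for the difference of inverses. I do not foresee any real obstacle.
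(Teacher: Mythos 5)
Your proof is correct and is the completely standard argument: the open mapping theorem for (1), the Neumann series for (2), and the resolvent-type identity $T_n^{-1}-T^{-1}=T_n^{-1}(T-T_n)T^{-1}$ together with the uniform bound on $\|T_n^{-1}\|$ for (3). The paper itself gives no proof of this proposition --- it is stated as a ``well-known'' and ``fundamental'' fact --- so there is nothing to diverge from; your write-up simply supplies the classical argument, and since the norm topology is metrizable, your sequential formulation of (3) does establish continuity of inversion.
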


The following properties are immediate consequences.

\begin{corollary}\label{cor:submersiveKernel}
    Let \(E \to E'\) be a linear map of Hilbert vector bundles over a base \(S\).
    If it is surjective on fibers, then the kernels form a Hilbert vector bundle.

    Even if we consider a linear map \(\varphi\colon E \to E'\) of Banach vector bundles, the kernels form a Banach vector bundle whenever the map is surjective on fibers and the (fiberwise) kernels \(\ker(\varphi_x)\) have complementary closed linear subspaces in those fibers \(E_x\).
\end{corollary}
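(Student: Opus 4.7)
The plan is to work locally on the base, reduce to the situation where the ambient bundles are trivialized, and produce a family of linear isomorphisms from a fixed model kernel onto the fiberwise kernels by inverting the restriction of $\varphi$ to a fixed complementary subspace.

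More concretely, fix a point $x_0 \in S$. The question being local, we may trivialize $E \cong U \times \mathcal{H}$ and $E' \cong U \times \mathcal{H}'$ on a neighborhood $U$ of $x_0$, so that $\varphi$ is given by a continuous family of bounded linear surjections $\varphi_x \colon \mathcal{H} \to \mathcal{H}'$ in the norm topology (by the definition of linear maps of Banach bundles in \cref{conv:HilbBdl}). Set $K \coloneqq \ker(\varphi_{x_0})$ and choose a closed linear complement $L \subset \mathcal{H}$ so that $\mathcal{H} = K \oplus L$ as topological vector spaces; in the Hilbert case we take $L = K^\perp$, and in the Banach case $L$ exists by hypothesis. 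The restriction $\varphi_{x_0}|_L \colon L \to \mathcal{H}'$ is a continuous linear bijection, hence an isomorphism in $\Ban_\RR$ by \cref{prop:GL(H)}(1).

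Next I would use \cref{prop:GL(H)}(2)(3): the set of isomorphisms $L \to \mathcal{H}'$ is open in the norm topology and inversion is continuous. Since $x \mapsto \varphi_x|_L$ is norm-continuous, after shrinking $U$ we obtain a norm-continuous family of isomorphisms $\varphi_x|_L \colon L \xrightarrow{\sim} \mathcal{H}'$, and hence a norm-continuous family of inverses $(\varphi_x|_L)^{-1} \colon \mathcal{H}' \to L$. Now define
\[
\psi \colon U \times K \longrightarrow U \times \mathcal{H}, \qquad \psi(x,k) = \bigl(x,\; k - (\varphi_x|_L)^{-1}\varphi_x(k)\bigr).
\]
This is continuous because evaluation and composition are jointly continuous in the norm topology (\cref{conv:HilbBdl}), and by construction $\varphi_x(\psi(x,k)) = \varphi_x(k) - \varphi_x(k) = 0$, so $\psi$ lands in the fiberwise kernel of $\varphi$.

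It remains to verify that, for each $x \in U$, the map $\psi(x,-) \colon K \to \ker(\varphi_x)$ is a linear isomorphism. Injectivity: if $\psi(x,k) = 0$ then $k = (\varphi_x|_L)^{-1}\varphi_x(k) \in K \cap L = 0$. Surjectivity: given $v \in \ker(\varphi_x)$, decompose $v = k + l$ along the fixed splitting $\mathcal{H} = K \oplus L$; the identity $\varphi_x(l) = -\varphi_x(k)$ forces $l = -(\varphi_x|_L)^{-1}\varphi_x(k)$, whence $v = \psi(x,k)$. Thus $\psi$ exhibits a local trivialization of $\ker(\varphi)$ as a Banach (respectively Hilbert) vector subbundle of $E$, with model fiber $K$. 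The main technical point is just to ensure the family $(\varphi_x|_L)^{-1}$ is norm-continuous in $x$, which is precisely what \cref{prop:GL(H)} provides; everything else is routine.
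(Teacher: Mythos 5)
Your argument is correct and is precisely the standard one the paper has in mind when it labels this corollary an ``immediate consequence'' of \cref{prop:GL(H)} (the paper gives no written proof). The only point left tacit is that the inverse of $\psi$ is the restriction of the fixed continuous projection $\mathcal{H}\to K$ along $L$, so $\psi$ is indeed a homeomorphism onto the kernel subbundle and the resulting transition functions are norm-continuous families of isomorphisms of $K$.
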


\begin{corollary}
    Let \(E \to E'\) be a linear map of Banach vector bundles over a base \(S\).
    If it is injective on fibers and the (fiberwise) images \(\im(\varphi_x)\) have complementary closed linear subspaces in those fibers \(E_x\), then the cokernels form a Banach vector bundle.
\end{corollary}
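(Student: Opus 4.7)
The plan is to mirror the preceding corollary on kernels and produce local trivializations of the cokernel bundle by transporting a fixed algebraic complement of the image. Fix a point $x_0 \in S$ and choose local trivializations $E|_U \cong U \times E_{x_0}$, $E'|_U \cong U \times E'_{x_0}$ over an open neighborhood $U$ of $x_0$, so that $\varphi$ is identified with a norm-continuous family $\{\varphi_x \colon E_{x_0} \to E'_{x_0}\}_{x \in U}$. By hypothesis, choose a closed linear complement $C \subset E'_{x_0}$ of $\im(\varphi_{x_0})$; in particular $\im(\varphi_{x_0})$ is closed.

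The key construction is the norm-continuous family
\[
\psi_x \colon E_{x_0} \oplus C \longrightarrow E'_{x_0}, \qquad (v, c) \longmapsto \varphi_x(v) + c.
\]
At $x = x_0$, injectivity of $\varphi_{x_0}$ together with the splitting $\im(\varphi_{x_0}) \oplus C = E'_{x_0}$ makes $\psi_{x_0}$ a continuous linear bijection, hence an isomorphism of Banach spaces by the open mapping theorem. Invoking \cref{prop:GL(H)}, which asserts that $\Iso_{\Ban_\RR}$ is open in the norm topology and that inversion is norm-continuous, we find a smaller neighborhood $U' \ni x_0$ on which each $\psi_x$ is an isomorphism and $x \mapsto \psi_x^{-1}$ is norm-continuous.

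For $x \in U'$, the composite of $\psi_x^{-1}$ with projection onto the second summand yields a norm-continuous family of bounded projections $\pi_x \colon E'_{x_0} \twoheadrightarrow C$ with $\ker(\pi_x) = \varphi_x(E_{x_0})$. Hence $\pi_x$ descends to an isomorphism $\cok(\varphi_x) \xrightarrow{\sim} C$, giving a bijection $\coprod_{x \in U'} \cok(\varphi_x) \cong U' \times C$ which we declare to be the local trivialization. Varying $x_0$ and patching, the transition functions between two such trivializations are assembled from the norm-continuous families $\psi_\bullet^{-1}$, so the cokernels inherit a Banach vector bundle structure over $S$.

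The only nontrivial input is the norm-continuity of $x \mapsto \psi_x^{-1}$, from which both the local trivializations and their transition cocycles derive. This, however, is exactly what the topological group structure on $\Aut_{\Ban_\RR}$ from \cref{prop:GL(H)} provides, so I anticipate no substantial obstacle; the whole argument is dual in spirit to the kernel case.
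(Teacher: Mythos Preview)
Your argument is correct and is exactly the intended one: the paper states this corollary without proof, as an immediate consequence of \cref{prop:GL(H)}, and your construction of the family \(\psi_x\colon E_{x_0}\oplus C \to E'_{x_0}\) supplies precisely the dualization of the kernel case that the reader is meant to fill in.
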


\begin{corollary}\label{cor:SurjOpen}
    Let \(\varphi\colon E \to E'\) be a linear map between Banach vector bundles over \(S\).
    Assume that each kernel \(\ker(\varphi_x)\) admits a complementary closed linear subspace in \(E_x\).
    Then the subset \(\{x\in S \mid \varphi_x \text{ is a surjection}\}\) is open in \(S\).
\end{corollary}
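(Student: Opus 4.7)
The plan is to pick a point $x_0$ in the (possibly empty) set where $\varphi$ is surjective on the fiber and produce a neighborhood of $x_0$ on which fiberwise surjectivity persists, purely by exploiting openness of the set of Banach-space isomorphisms together with the splitting hypothesis.

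First, fix $x_0 \in S$ with $\varphi_{x_0} \colon E_{x_0} \twoheadrightarrow E'_{x_0}$ surjective, and choose, by hypothesis, a closed linear subspace $W \subset E_{x_0}$ complementary to $\ker(\varphi_{x_0})$. Because $\varphi_{x_0}$ is surjective with kernel splitting off $W$, the restriction $\varphi_{x_0}|_W \colon W \to E'_{x_0}$ is a continuous linear bijection, hence by the open-mapping theorem (\cref{prop:GL(H)}(1)) an isomorphism in $\Ban_\RR$.

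Next, work locally: shrink $S$ to an open neighborhood of $x_0$ over which both bundles are trivialized, so we may write $E \cong S\times \mathcal{H}$ and $E' \cong S\times \mathcal{H}'$, and $\varphi$ corresponds to a norm-continuous map $\widetilde{\varphi} \colon S \to \Hom_{\Ban_\RR}(\mathcal{H}, \mathcal{H}')$. Pulling back along the inclusion $W \hookrightarrow \mathcal{H}$ gives another norm-continuous map $\widetilde{\varphi}|_W \colon S \to \Hom_{\Ban_\RR}(W,\mathcal{H}')$, which at $x_0$ lands in the open subset of isomorphisms $\Iso_{\Ban_\RR}(W, \mathcal{H}') \subset \Hom_{\Ban_\RR}(W, \mathcal{H}')$ by the preceding paragraph and \cref{prop:GL(H)}(2).

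Finally, by continuity there is an open neighborhood $U \ni x_0$ such that $\widetilde{\varphi}_x|_W$ is an isomorphism for every $x \in U$; in particular $\varphi_x|_W \colon W \to E'_x$ is surjective and hence $\varphi_x$ itself is surjective on each fiber over $U$. This shows the set of surjectivity points is open. The only mildly delicate step is verifying that the restriction $\varphi_x|_W$ varies norm-continuously in $x$, but this is automatic after trivialization since precomposition with the fixed bounded inclusion $W \hookrightarrow \mathcal{H}$ is norm-continuous on $\Hom_{\Ban_\RR}(\mathcal{H},\mathcal{H}')$; no further use of the splitting hypothesis at other points $x \neq x_0$ is required.
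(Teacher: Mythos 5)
Your proof is correct and is exactly the argument the paper has in mind: the paper states this corollary without proof as an ``immediate consequence'' of \cref{prop:GL(H)}, and your restriction of $\varphi_x$ to a fixed closed complement $W$ of $\ker(\varphi_{x_0})$, combined with the open mapping theorem and the openness of $\Iso_{\Ban_\RR}(W,\mathcal{H}')$, is the standard way to make that immediacy precise. Your closing remark that the splitting hypothesis is only needed at the point $x_0$ is also accurate.
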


\subsection{Some calculus on vector bundles}
We provide a variant of the inverse function theorem for Banach vector bundles.

\begin{definition}\label{dfn:bundleC^1}
    Let \(E\) and \(E'\) be (the total spaces of) Banach vector bundles over a base \(S\).
    Let \(f\colon V \to E'\) be a map defined on some open subset \(V\subset E\).
    Then \(f\) is said to be \emph{of \(C^1\)-class over \(S\)} or simply \emph{\(C^1\)-differentiable} if the differential on the vertical directions \[
    df \colon
    V \times_S E \to V \times_S E'
    \] defines a continuous linear map (in the norm topology: \cref{conv:HilbBdl}) over the base \(V\).
\end{definition}

\begin{lemma}
    Let \(U\times \mathcal{H}\) and \(U\times \mathcal{H}'\) be trivial Banach vector bundles over a topological space \(U\).
    Let \(f\) be a map \(U\times \mathcal{H} \to U \times \mathcal{H}'\) of \(C^1\)-class over \(U\) (\cref{dfn:bundleC^1}).
    Assume that at some point \((x_0,v_0)\in U\times \mathcal{H}\), the vertical derivative
    \[d_{v_0}f_{x_0} \colon \mathcal{H} \to \mathcal{H}'\]
    is a (continuous) linear isomorphism.
    Then \(f\) admits a local \(C^1\)-inverse near \((x_0,v_0)\), i.e., there exists a local inverse \[g\colon U_1 \times V' \to U_1\times V\]
    which is of \(C^1\)-class over \(U_1\).\footnote{We don't use the \(C^1\)-structure of the local inverse in this paper.}
\end{lemma}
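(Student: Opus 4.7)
The plan is to mimic the classical proof of the inverse function theorem in Banach spaces, but with the base parameter $x \in U$ kept explicit throughout, using the norm-topology continuity of the vertical derivative to obtain uniform estimates near $(x_0, v_0)$.

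After shrinking $U$ to a neighborhood of $x_0$, I would first normalize: translate $\mathcal{H}$ and $\mathcal{H}'$ so that $v_0 = 0$ and $f_{x_0}(0) = 0$, and post-compose with the bounded linear inverse $(d_{v_0}f_{x_0})^{-1}\colon \mathcal{H}' \to \mathcal{H}$, whose existence is guaranteed by \cref{prop:GL(H)}(1). This reduces the problem to the case $\mathcal{H}' = \mathcal{H}$ and $d_0 f_{x_0} = \mathrm{id}_{\mathcal{H}}$. Writing $f(x,v) = (x, v - \phi(x,v))$, the hypothesis becomes $\phi(x_0, 0) = 0$ and $d_0\phi_{x_0} = 0$, and the $C^1$-over-$U$ assumption says the assignment $(x,v)\mapsto d_v\phi_x \in \Hom_{\Ban_\RR}(\mathcal{H},\mathcal{H})$ is continuous in the norm topology.

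The heart of the argument is then a parameterized contraction. By norm continuity of $d\phi$, I can pick $r > 0$ and a neighborhood $U_1 \ni x_0$ so that $\|d_v\phi_x\| \le 1/2$ for $(x,v) \in U_1 \times \overline{B(0,r)}$; shrinking $U_1$ once more, I can also ensure $\|\phi(x,0)\| < r/4$ for $x \in U_1$. A standard mean-value inequality gives that $v \mapsto w + \phi(x,v)$ is $(1/2)$-Lipschitz on $\overline{B(0,r)}$ and sends this ball into itself whenever $\|w\| < r/4$. The Banach fixed-point theorem then yields a unique $g(x,w) \in \overline{B(0,r)}$ with $v = w + \phi(x,v)$, i.e.\ $f(x, g(x,w)) = (x,w)$, for each $(x,w) \in U_1 \times B(0, r/4)$. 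Continuity of $g$ in both variables follows from the contraction estimate together with the joint continuity of $(x,v) \mapsto \phi(x,v)$ (which itself follows from continuity of $\phi(x,0)$ and the uniform Lipschitz bound).

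For the $C^1$ regularity over $U_1$, I would differentiate the fixed-point equation in the vertical $w$-direction to get the formal expression $d_w g_x = (\id - d_{g(x,w)}\phi_x)^{-1}$, which makes sense because $\|d\phi\| \le 1/2$ ensures invertibility via the Neumann series and \cref{prop:GL(H)}(3) guarantees that inversion is continuous in the norm topology. Combined with continuity of $d\phi$, this shows $(x,w)\mapsto d_w g_x$ is continuous in norm topology, hence $g$ is $C^1$ over $U_1$ in the sense of \cref{dfn:bundleC^1}. The main obstacle is the bookkeeping for this last step: one must verify that the formal derivative really is the vertical derivative of $g$ (as opposed to just a candidate), which is the usual verification in the Banach inverse function theorem and uses the Lipschitz estimate on $\phi$ to estimate the remainder. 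Since the footnote observes that the $C^1$-property of $g$ is not actually needed in the paper, one could also simply stop after producing the continuous local inverse.
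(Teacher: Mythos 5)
Your proposal is correct and follows essentially the same route as the paper: normalize so that \(d_{v_0}f_{x_0}=\id\) and \(v_0=0\), run a parameterized Banach fixed-point argument on \(v\mapsto w+v-f_x(v)\) using norm-continuity of the vertical derivative for the uniform \(\tfrac12\)-Lipschitz bound, and deduce continuity of the inverse from the contraction estimate. The paper stops after establishing continuity of \(g\) (explicitly declining to verify the \(C^1\)-structure, as the footnote anticipates), whereas you additionally sketch the Neumann-series argument for the derivative; your extra care in arranging \(\|\phi(x,0)\|\) small after shrinking \(U_1\) is a sound touch.
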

\begin{proof}
    We argue by a completely analogous argument to~\cite[Theorem 5.2]{Lang}.

    We will write the value of the map \(f\) at a point \((x,v)\in U\times \mathcal{H}\) as \(f_x(v)\). Via the isomorphism \(d_{v_0} f_{x_0}\), we may identify \(\mathcal{H}'\) with \(\mathcal{H}\) and \(d_{v_0} f_{x_0}\) with \(\id_{\mathcal{H}}\). We may also translate and assume that \(v_0=0\).

    First, consider the map \(p_x(v) \coloneqq (x, v-f_x(v)) \colon U\times \mathcal{H} \to U\times \mathcal{H}\), so that \(d_0 p_{x_0}=0\). By the norm-continuity, we find an open neighborhood \(U_0\) of \(x_0\) and a positive \(r>0\) such that \(\|d_v p_x\|<\frac{1}{2}\) holds for \(x\in U_0\) and \(|v|<r\).
    By the mean value theorem~\cite[Lemma 4.2]{Lang}, it follows that \[p_x(v_1) - p_x(v_2) \in \{x\}\times B_{\le r/2}\] for \(v_1,v_2 \in B_{\le r}\) and \(x\in U_0\).
    In other words, \(p\) maps \(U_0 \times B_{\le r}\) to \(U_0 \times B_{\le r/2}\).

    Next, consider the map \(q_{x,w} \colon \mathcal{H} \to \mathcal{H} ; q_{x,w}(v) \coloneqq w+ v- f_x(v)\).
    For each  \(x\in U_0\) and \(|w|\le \frac{r}{2}\), we have
    \[
    | q_{x,w}(v) | \le |w| + |v-f_x(v)| \le \frac{r}{2} + \frac{r}{2}
    \] for any \(v\in B_{\le r}\). So \(q_{x,w}\) is a map from \(B_{\le r}\) to itself.
    Moreover, \(q_{x,w} \colon B_{\le r} \to B_{\le r}\) is a contraction since 
    \[
    | q_{x,w}(v_1) - q_{x,w}(v_2) | \le \frac{1}{2} |v_1 - v_2|
    \] again by the mean value theorem.
    Thus, by the fixed point theorem, for every \(x\in U_0\) and \(|w|\le B_{\le r/2}\) there exists a unique \(v\in B_{\le r}\), which will be denoted by \(g_x(w)\), such that \(w = f_x(v)\).

    We then want to conclude that \(g\colon U_0 \times B_{\le r/2} \to B_{\le r}\) is continuous.
    We have the following inequalities
    \begin{align*}
        |g_{x_1}(w_1) - g_{x_2}(w_2)| & = | p_{x_1}g_{x_1}(w_1) + f_{x_1}g_{x_1}(w_1) - p_{x_1}g_{x_2}(w_2) - f_{x_1}g_{x_2}(w_2) | \\
        &\le | p_{x_1}(g_{x_1}(w_1)) - p_{x_1}(g_{x_2}(w_2) | + | w_1 - f_{x_1}g_{x_2}(w_2) | \\
        &\le \frac{1}{2}|g_{x_1}(w_1) - g_{x_2}(w_2) | + |w_1 - w_2| + | f_{x_2}(g_{x_2}(w_2)) - f_{x_1}(g_{x_2}(w_2)) |
    \end{align*}
    so that 
    \[
    |g_{x_1}(w_1) - g_{x_2}(w_2)| \le 2 |w_1-w_2| + 2|f_{x_2}(v_2) - f_{x_1}(v_2)|
    \]
    and the continuity of \(g\) follows from the continuity of \(f\colon U \times \mathcal{H} \to \mathcal{H}'\).
    This completes the proof that \(f\) is a local homeomorphism at that point.

    We don't prove the continuity of the differential of \(g\) because we don't need this.
\end{proof}


\begin{corollary}[Submersions are smooth] \label{cor:submersion}
    Let \(U\times \mathcal{H}\) and \(U\times \mathcal{H}'\) be trivial Banach vector bundles over a topological space \(U\).
    Let \(f\) be a map \(U\times \mathcal{H} \to U \times \mathcal{H}'\) of \(C^1\)-class over \(U\) (\cref{dfn:bundleC^1}).    
    Assume that at some point \((x,v)\in U\times \mathcal{H}\) the vertical differential
    \[
    d_{v}f_{x} \colon \mathcal{H} \to \mathcal{H}'
    \] is surjective and that \(W_0 \coloneqq \ker(d_{v}f_{x})\) admits a complementary closed linear subspace in \(\mathcal{H}\).
    Then there exist open subsets \(U_1\times V \ni (x,v)\) of \(U\times \mathcal{H}\), \(V'\) of \(\mathcal{H}'\)
    and a homeomorphism
    \(V' \times W_0 \cong V\) such that the diagram
    \[\begin{tikzcd}[column sep=small]
        U_1\times V'\times W_0 \ar[d, "\pr"] \ar[r, phantom, "\cong"] & U_1\times V \ar[r, phantom, "\subset"] & U\times \mathcal{H} \ar[dl, "f", end anchor=north east] \\
        U_1\times V' \ar[r, phantom, "\subset"]
        & U_1 \times \mathcal{H}'
    \end{tikzcd}\] commutes.
\end{corollary}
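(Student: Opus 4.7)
The plan is to reduce to the previous inverse function theorem by enlarging $f$ to an auxiliary map whose vertical differential at $(x,v)$ is a continuous linear isomorphism.

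First, I pick a closed linear complement $W_1 \subset \mathcal{H}$ of $W_0 = \ker(d_v f_x)$, which exists by hypothesis, so that $\mathcal{H} = W_0 \oplus W_1$ as topological vector spaces. The restriction $d_v f_x|_{W_1}\colon W_1 \to \mathcal{H}'$ is a continuous linear bijection (injectivity is immediate since $\ker(d_v f_x) \cap W_1 = 0$, and surjectivity follows from surjectivity of $d_v f_x$), hence a topological isomorphism by \cref{prop:GL(H)}. Let $\pr_{W_0}\colon \mathcal{H} \to W_0$ denote the continuous linear projection along $W_1$.

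Next, form the auxiliary map
\begin{equation*}
    \tilde f\colon U \times \mathcal{H} \longrightarrow U \times (\mathcal{H}' \oplus W_0), \qquad (x', w) \longmapsto (x', f_{x'}(w), \pr_{W_0}(w)),
\end{equation*}
which is $C^1$ over $U$ because $f$ is and $\pr_{W_0}$ is linear. Its vertical differential at $(x, v)$ is the continuous linear map $w \mapsto (d_v f_x(w), \pr_{W_0}(w))$, which is bijective: given $(y, w_0) \in \mathcal{H}' \oplus W_0$, the unique preimage is $w_0 + (d_v f_x|_{W_1})^{-1}(y - d_v f_x(w_0))$. Hence it is a topological isomorphism by \cref{prop:GL(H)}. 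Applying the previous inverse function theorem to $\tilde f$ at $(x,v)$ produces open neighborhoods $U_1 \ni x$, $V \ni v$, and an open neighborhood $V_1$ of $\tilde f(x,v)$ in $\mathcal{H}' \oplus W_0$ such that $\tilde f$ restricts to a homeomorphism $U_1 \times V \cong U_1 \times V_1$ over $U_1$.

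After shrinking, I may assume $V_1 = V' \times V_0$ with $V'$ open in $\mathcal{H}'$ and $V_0$ a convex open neighborhood of $\pr_{W_0}(v)$ in $W_0$. Since a convex open subset of a Banach space is homeomorphic to the whole Banach space (by radial rescaling in finite dimensions, or Klee's theorem in general), a choice of such a homeomorphism $W_0 \cong V_0$ composes with $\tilde f^{-1}$ to give the desired homeomorphism $U_1 \times V' \times W_0 \cong U_1 \times V$. Commutativity of the diagram is immediate from the identity $f(x', w) = \pi(\tilde f(x', w))$, where $\pi\colon U \times \mathcal{H}' \times W_0 \to U \times \mathcal{H}'$ drops the last factor. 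The main obstacle is the conceptual step of cooking up $\tilde f$; everything afterwards is essentially bookkeeping, with the only minor subtlety being the cosmetic replacement of $V_0$ by $W_0$.
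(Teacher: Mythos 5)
Your argument is the standard one and surely the intended one: the paper states \cref{cor:submersion} without proof as a direct consequence of the preceding inverse function lemma, and augmenting \(f\) to \(\tilde f=(f,\pr_{W_0})\) so that the vertical differential at \((x,v)\) becomes a continuous linear bijection (hence a topological isomorphism by \cref{prop:GL(H)}) is exactly the right reduction. The verification that \(d_v\tilde f_x\) is bijective, the explicit inverse, the identity \(f=\pi\circ\tilde f\), and the final replacement of a convex (or ball) neighborhood \(V_0\subset W_0\) by \(W_0\) via a radial homeomorphism are all correct.

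One step is stated more strongly than the lemma delivers: you claim \(\tilde f\) restricts to a homeomorphism \(U_1\times V\cong U_1\times V_1\) between \emph{products}. The lemma actually produces a continuous section \(g\) of \(\tilde f\) defined on a product \(U_1\times V_1\) in the target, whose image is the open set \(\tilde f^{-1}(U_1\times V_1)\cap(U_1\times B)\); this is an open neighborhood of \((x,v)\) but in general not of the form \(U_1\times V\), since the region on which \(\tilde f_{x'}\) is invertible varies with \(x'\). Indeed, already for \(f(x',w)=(x',w+x'w^2)\) on \(\RR\times\RR\) at \((0,0)\) no pair of products \(U_1\times V\), \(V'\) can satisfy the literal conclusion, so this cannot be repaired; but the imprecision is inherited from the statements of the corollary and of the inverse function lemma themselves, not introduced by you. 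It is also harmless for the only application, in \cref{lem:factorizaton}, which needs only that \(f\) is identified over \(U_1\), on \emph{some} open neighborhood \(N\) of \((x,v)\), with the projection \(U_1\times V'\times V_0\to U_1\times V'\); taking \(N=g(U_1\times V'\times V_0)\) and \(\Phi=g\) in your argument gives exactly that, and the rest of your proof goes through verbatim.
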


\providecommand{\bysame}{\leavevmode\hbox to3em{\hrulefill}\thinspace}
\renewcommand{\MR}[1]{\relax\ifhmode\unskip\space\fi MR: \href{http://www.ams.org/mathscinet-getitem?mr=#1}{#1}.}


\begin{thebibliography}{HLSX22}

\bibitem[Aok23]{aoki2023sheavesspectrumadjunction}
Ko~Aoki, \emph{The sheaves-spectrum adjunction}, 2023, \href{https://arxiv.org/abs/2302.04069v1}{\texttt{arXiv:2302.04069v1}}.

\bibitem[BF04]{BF1}
Stefan Bauer and Mikio Furuta, \emph{A stable cohomotopy refinement of {S}eiberg-{W}itten invariants. {I}}, Invent. Math. \textbf{155} (2004), no.~1, 1--19. \MR{2025298}

\bibitem[BK22]{BaragliaKonno19}
David Baraglia and Hokuto Konno, \emph{On the {B}auer-{F}uruta and {S}eiberg-{W}itten invariants of families of 4-manifolds}, J. Topol. \textbf{15} (2022), no.~2, 505--586. \MR{4441598}

\bibitem[CCL24]{globalspaces}
Adrian Clough, Bastiaan Cnossen, and Sil Linskens, \emph{Global spaces and the homotopy theory of stacks}, 2024, \href{https://arxiv.org/abs/2407.06877v2}{\texttt{arXiv:2407.06877v2}}.

\bibitem[Cno24]{BastiaanPhD}
Bastiaan Cnossen, \emph{Twisted ambidexterity in equivariant homotopy theory: Two approaches}, Ph.D. thesis, Rheinische Friedrich-Wilhelms-Universität Bonn, January 2024.

\bibitem[Fur97]{Fur97}
Mikio Furuta, \emph{Stable homotopy version of {S}eiberg-{W}itten invariant}, MPIM Preprint Series, 1997.

\bibitem[Fur01]{Fur01}
\bysame, \emph{Monopole equation and the {$\frac{11}8$}-conjecture}, Math. Res. Lett. \textbf{8} (2001), no.~3, 279--291. \MR{1839478}

\bibitem[Hai22]{NonabPBC}
Peter~J. Haine, \emph{From nonabelian basechange to basechange with coefficients}, 2022, \href{https://arxiv.org/abs/2108.03545v3}{\texttt{arXiv:2108.03545v3}}.

\bibitem[HLSX22]{HLSX18}
Michael~J. Hopkins, Jianfeng Lin, XiaoLin~Danny Shi, and Zhouli Xu, \emph{Intersection forms of spin 4-manifolds and the {${\rm Pin}(2)$}-equivariant {M}ahowald invariant}, Commun. Am. Math. Soc. \textbf{2} (2022), 22--132. \MR{4385297}

\bibitem[Hoy17]{Hoyois}
Marc Hoyois, \emph{The six operations in equivariant motivic homotopy theory}, Adv. Math. \textbf{305} (2017), 197--279. \MR{3570135}

\bibitem[Jam89]{FibwiseTop}
Ioan~M. James, \emph{Fibrewise topology}, Cambridge Tracts in Mathematics, vol.~91, Cambridge University Press, Cambridge, 1989. \MR{1010230}

\bibitem[Lan95]{Lang}
Serge Lang, \emph{Differential and {R}iemannian manifolds}, third ed., Graduate Texts in Mathematics, vol. 160, Springer-Verlag, New York, 1995. \MR{1335233}

\bibitem[Lur09]{HTT}
Jacob Lurie, \emph{Higher topos theory}, Annals of Mathematics Studies, vol. 170, Princeton University Press, Princeton, NJ, 2009. Updated copy available at \url{https://www.math.ias.edu/~lurie/papers/HTT.pdf}.

\bibitem[Lur17]{HA}
\bysame, \emph{Higher algebra}, 2017, \url{https://www.math.ias.edu/~lurie/papers/HA.pdf}.

\bibitem[Man22]{Mann}
Lucas Mann, \emph{A $p$-adic 6-functor formalism in rigid-analytic geometry}, 2022, \href{https://arxiv.org/abs/2206.02022v1}{\texttt{arXiv:2206.02022v1}}.

\bibitem[MW24]{Proper}
Louis Martini and Sebastian Wolf, \emph{Proper morphisms of $\infty$-topoi}, 2024, \href{https://arxiv.org/abs/2311.08051v2}{\texttt{arXiv:2311.08051v2}}.

\bibitem[Nak13]{Nakamura13}
Nobuhiro Nakamura, \emph{{$\rm{Pin}^-(2)$}-monopole equations and intersection forms with local coefficients of four-manifolds}, Math. Ann. \textbf{357} (2013), no.~3, 915--939. \MR{3118618}

\bibitem[Pst23]{Syn_E}
Piotr Pstrągowski, \emph{Synthetic spectra and the cellular motivic category}, Invent. Math. \textbf{232} (2023), no.~2, 553--681. \MR{4574661}

\bibitem[Ram24]{LocRig}
Maxime Ramzi, \emph{Locally rigid $\infty$-categories}, 2024, \href{https://arxiv.org/abs/2410.21524v1}{\texttt{arXiv:2410.21524v1}}.

\bibitem[Sch23]{Sch6FF}
Peter Scholze, \emph{Six-functor formalisms}, 2023, \url{https://people.mpim-bonn.mpg.de/scholze/SixFunctors.pdf}.

\bibitem[SS16]{SS16}
Olaf~M. Schn\"urer and Wolfgang Soergel, \emph{Proper base change for separated locally proper maps}, Rend. Semin. Mat. Univ. Padova \textbf{135} (2016), 223--250. \MR{3506070}

\bibitem[Vol23]{Volpe}
Marco Volpe, \emph{The six operations in topology}, 2023, \href{https://arxiv.org/abs/2110.10212v2}{\texttt{arXiv:2110.10212v2}}.

\end{thebibliography}
\end{document}